\documentclass{easychair}

\usepackage{cutelmet}
\usepackage{proof}
\usepackage{bussproofs}
\usepackage{latexsym}
\newtheorem{theorem}{Theorem}[section]

\newtheorem{proposition}{Proposition}[section]
\newtheorem{lemma}{Lemma}[section]
\newtheorem{examp}{Example}[section]
\newtheorem{example}{Example}[section]
                   
\def\@ptdef[#1]{\begin{definition}[#1]\rm}
\newtheorem{definition}{Definition}[section]
                     {{\hfill \Endmark}\end{definition}\@endparenv}

\def\@ptexerc[#1]{\begin{exercise}[#1]\rm}
\newtheorem{exercise}{Exercise}[section]
                     {\end{exercise}\@endparenv}

\title{Herbrand's Theorem in Refutation Schemata}
\titlerunning{Herbrand's Theorem in Refutation Schemata}

\author{
Alexander Leitsch\inst{1}
\and
    Anela Lolic\inst{2}\thanks{Partially supported by FWF project I-5848-N.}
}

\institute{
  Institute of Logic and Computation,\\TU Wien, 
   Vienna, Austria\\
  \email{leitsch@logic.at}
\and
   Kurt G\"odel Society, 
   Institute of Logic and Computation,\\TU Wien, 
   Vienna, Austria\\
   \email{anela@logic.at}
 }
 
\authorrunning{Alexander Leitsch, Anela Lolic}

\begin{document}

\maketitle

\begin{abstract}
An inductive proof can be represented as a proof schema, i.e. as a parameterized sequence of proofs defined in a primitive recursive way. A corresponding cut-elimination method, called schematic CERES, can be used to analyze these proofs, and to extract their (schematic) Herbrand sequents, even though Herbrand's theorem in general does not hold for proofs with induction inferences.
This work focuses on the most crucial part of the schematic cut-elimination method, which is to construct a refutation of a schematic formula that represents the cut-structure of the original proof schema.
We develop a new framework for schematic substitutions and define a unification algorithm for resolution schemata. Moreover, we introduce a new calculus for the refutation of formula schemata  which is simpler and more expressive than previous formalisms. Finally, we show that this new formalism allows the extraction of a structure from the refutation schema, called a Herbrand system, which represents its Herbrand sequent.
\end{abstract}

\section{Introduction}

Herbrand's theorem is one of the most important results of mathematical logic. It expresses the fact that in a formal (cut-free) proof of a prenex form propositional and quantifier inferences can be separated. In the formalism of sequent calculus this means that a so-called Herbrand sequent can be extracted from a proof; the propositional inferences operate above the sequent, the quantifier inferences below. In automated theorem proving Herbrand's theorem is used as a tool to prove completeness of refinements of resolution; moreover the theorem can yield a compact representations of proofs by abstracting away the propositional inferences. Such compact representations play also a major role in computational proof analysis: formal proofs obtained by cut-elimination from formalized mathematical proofs are typically very long covering up the mathematical content of the proofs. Experiments with the system CERES (cut-elimination by resolution, see~\cite{BL00} and~\cite{BL11}) have revealed that Herbrand forms display the main mathematical arguments of a proof in a natural way~\cite{hetzl2008herbrand}.

As most interesting mathematical proofs contain applications of mathematical induction, an extension of CERES to the analysis of inductive proofs was of major importance to turn the method into a practically useful tool for (interactive) proof analysis. A first thorough analysis of an inductive (schematic) CERES method can be found in~\cite{LPW17}; the inductive proofs investigated in this paper are those representable by a single parameter - in the formalisation by a {\em proof schema}. Here also the first concept of a {\em Herbrand system} was developed; it is essentially an extension of Herbrand's theorem from single proofs to a (recursively defined) infinite sequence of proofs. This definition of a Herbrand system represented the first step to extend Herbrand's theorem to inductive proofs (note that single proofs using the induction rule do not admit a construction of Herbrand sequents). In~\cite{Thesis.Lolic.2020} the approach in~\cite{LPW17} was extended to arbitrary many induction parameters thus considerably increasing the strength of the method. 

The schematic CERES method is capable of performing cut-elimination in presence of induction. There are other approaches to inductive inference where cut-elimination is possible as well. We just mention the works of Brotherston and Simpson~\cite{Brotherston.2005}~\cite{brotherston2011sequent} and of McDowell and Miller~\cite{mcdowell2000cut}. However, though these approaches present inductive calculi with corresponding cut-elimination methods they do not allow the construction of any Herbrand structures (in particular of Herbrand systems) as described in~\cite{LPW17} and~\cite{Thesis.Lolic.2020}.

The core of the first-order CERES method defined in~\cite{BL00} consists of the construction of a resolution refutation of a characteristic clause set (which represents the derivations of the cut formulas); this resolution refutation is then combined with a so-called proof projection to a proof of the given theorem containing only atomic cuts (called a CERES normal form). As Herbrand's theorem holds also in presence of any quantifier-free cuts, a Herbrand sequent could be extracted from such a CERES normal form. In schematic CERES the characteristic clause set becomes a characteristic formula schema (first defined in~\cite{LPW17}), projections become projection schemata. In~\cite{Thesis.Lolic.2020} it was shown (for CERES with arbitrary many induction parameters) that the construction of a proof normal  form schema, the generalization of a CERES normal form, is not necessary in constructing a Herbrand sequent: by combining the Herbrand system of the projection schema and the Herbrand system of the refutation schema a Herbrand system of the normal form schema can be constructed - without constructing the normal form schema itself. Hereby the most complex task consists in the computation of the Herbrand system of the refutation schema, which justifies the investigation of Herbrand systems in refutation schemata on its own.

In~\cite{CLL.2021} the calculus for refuting formula schemata as defined in~\cite{Thesis.Lolic.2020} was substantially extended by the use of point transition systems. The expressivity of the new calculus was demonstrated on a formula schema which could not be refuted by earlier methods. However, the construction of Herbrand systems was not addressed in~\cite{CLL.2021}, mainly due to a missing formalism for handling schematic substitutions and schematic unifications.

In this paper we develop a new framework for schematic substitutions and define a unfication algorithm for resolution schemata. This new framework allows the construction of Herbrand systems from a resolution refutation schema, one of the corner stones of the schematic CERES method; what is still missing is the combination with a Herbrand system of the projection schema as only then we obtain a Herbrand system of the normalform schema of the input proof. However, the extraction of Herbrand systems from refutation schemata is of importance on its own: indeed, the Herbrand system of a refutation schema $R$ may reveal crucial mathematical information contained in $R$. As an example the analysis of F\"urstenberg's proof of the infinitude of primes in~\cite{BHLRS.2008} (still not formalized but carried out on the mathematical meta-level) could be mentioned: here the schematic Herbrand instances  represent Euclid's construction of primes. 

To sum up we believe that a powerful formalism for the construction of Herbrand systems will be crucial in future developments of inductive proof analysis. The construction of Herbrand systems from refutation schemata as defined in this paper  can be considered a major step in this direction.

\section{Term Schemata and Formula Schemata}

Term schemata are syntactic objects describing infinite sets of terms via parameters, i.e. variables ranging over natural numbers. These syntactic objects are based on inductive definitions. In this section we will present the classes of numeric terms and of schematic individual terms. Given an parameter assignment mapping parameters to numerals, the objects of the first class will evaluate to numerals, these of the second class to first-order terms.

We start with simple numeric terms based on the constant symbol $\bar{0}$,  and the function symbols successor $s$ and predecessor $p$. Let $\Ncal$ be a countably infinite set of variables which we call {\em parameters}.
\begin{definition}\label{def.basicterm}
The set of basic terms, denoted by $T_0$ is defined inductively below:\\
$\bar{0} \in T_0$, $\Ncal \IN T_0$,\\
if $t \in T_0$ then $s(t) \in T_0$ and $p(t) \in T_0$.
\end{definition}
\begin{definition}[numerals]\label{def.numeral}
Let $T^G_0$ be the set of all ground terms (variable-free terms) in $T_0$. The subset of $T^G_0$ in which only the symbols $\bar{0}$ and $s$ occur is called the set of numerals and is denoted by $\Num$.
\end{definition}
\begin{definition}[parameter assignment]\label{def.parass}
A function $\sigma\colon \Ncal \to \Num$ is called a {\em parameter assignment}. The set of all parameter assignments is denoted by $\Pas$.	Parameter assignments $\sigma$ can be extended to functions in $T_0$ in an obvious way:
\begin{itemize}
\item $\sigma(\bar{0}) = \bar{0}$, $\sigma(s(t)) = s(\sigma(t))\Eval$, $\sigma(p(t)) = p(\sigma(t))\Eval$, 
where $p(\bar{0})\Eval = \bar{0}\Eval = \bar{0}, p(s(t))\Eval = t\Eval, s(t)\Eval = s(t\Eval)$.
\end{itemize}
Note that, for any term $t \in T_0$ and any $\sigma \in \Pas$, $\sigma(t)$ is of the form $s^\alpha(\bar{0})$ for a natural number $\alpha$.
\end{definition}

We extend $T_0$ by terms allowing primitive recursive definitions. The extension is based on the introduction of a class of function symbols and corresponding defining equations. 
\begin{definition}\label{def.numfs}
For every $n \geq 1$ we define a countably infinite class $\Fomega_n$ containing function symbols of arity $n$ (the  numeric function symbols of arity $n$) and $\Fomega = \Union^\infty_{n=1} \Fomega_n$ (the numeric function symbols).
For the basic functions $s$ and $p$ we require  $s,p \not \in\Fomega$.
\end{definition}
Below we extend the set of terms $T_0$  to the set of {\em numeric terms} $\Tomega$:
\begin{definition}[$\Tomega$]\label{def.num_terms}
The set $\Tomega$ is defined inductively as follows:\\
$0 \in \Tomega$, $\Ncal \IN \Tomega$,\\
if $t \in \Tomega$ then $s(t) \in \Tomega$ and $p(t) \in \Tomega$,\\
if $t_1,\ldots,t_m \in \Tomega$ and $f \in \Fomega_m$ then $f(t_1,\ldots,t_m) \in \Tomega$.
\end{definition}
An {\em $\omega$-theory} provides the recursive definitions corresponding to the numeric function symbols.
\begin{definition}[$\omega$-theory]\label{def.theory}
A tupel $T\colon (f,\Fcal,D(\Fcal),<)$ is called an $\omega$-theory of $f$ if \\
$f \in \Fcal$, $\Fcal$ is a finite subset of $\Fhat$,
$<$ is an irreflexive, transitive relation on $\Fcal \union \{s,p\}$ such that for all $g \in \Fcal\setminus \{f\}\colon g<f$ ($f$ is 
      maximal in $\Fcal$) and for all $g \in \Fcal$ $s < g$ and $p < g$ ($s$ and $p$ are minimal).\\[1ex]
$D(\Fcal) = \Union_{g \in \Fcal}D(g)$ where $D(g)$ is a set of two equations (we assume $g \in \Fomega_{m+1}$) of the form:
$$\{g(x_1,\ldots,x_m,s(y)) = t^g_S\{z \ass g(x_1,\ldots,x_m,y)\},\ g(x_1,\ldots,x_m,\bar{0}) = t^g_B \}$$
where $t^g_S,t^g_B \in \Tomega$, $\Ncal(t^g_S) \IN \{x_1,\ldots,x_m,y,z\}$, $\Ncal(t^g_B) \IN \{x_1,\ldots,x_m\}$, $\Gcal\colon \Fomega(t_S) \union \Fomega(t_B) \IN \Fcal$,  and  for all $h \in \Gcal: h<g$.
\end{definition}
A semantics of terms in $T^\omega$ and thus of $\omega$-theories $T$ is defined in~\cite{CLL.2021}. Intuitively the evaluation of a term  $g(x_1,\ldots,x_m,y)$ (with $D(g)$ as above) under a parameter assignment $\sigma$ corresponds to the computation of the recursive program 
$$g(x_1,\ldots,x_m,y) = \If\ y=0\ \Then\ t^g_B\ \Else\ t^g_S\{z \ass g(x_1,\ldots,x_m,y-1)\}$$
on the input $\sigma(x_1),\ldots,\sigma(x_m),\sigma(y)$. The numeral obtained by evaluation of a term $t$ in $T$ under a parameter assignment $\sigma$ is denoted by $\sigma(t)\Eval_T$. An $\omega$-theory is nothing else than a theory for computing primitive recursive functions. \\[1ex]
The next term schemata we define are {\em individual term schemata}.
\begin{example}\label{ex.termschema}
Let $f$ be a one-place function symbol and $x$ be an individual (first-order) variable. Then the terms $x,f(x),f(f(x)),\ldots$ form an infinite sequence which, in an informal mathematical notation, could be denoted by $f^n(x)_{n \in \N}$. In analogy to numeric terms we describe this sequence recursively via a new (defined) function symbol; let us call this symbol $\fhat$ and define it as 
$$\fhat(x,\bar{0}) = x,\ \fhat(x,s(n)) = f(\fhat(x,n)).$$
\end{example} 
\begin{definition}[function symbols]\label{def.FSiota}
For every $n \geq 0$ we define an infinite set of function symbols of arity $n$ (ranging over individual domains) which we denote by $\Fiota_n$, the set of all function symbols is defined as $\Fiota = \Union_{n \in \N}\Fiota_n$.
\end{definition}
We extend the usual concept of variables by so-called variable expressions which will be needed in formula schemata and refutation schemata. 
\begin{definition}[individual variables]\label{def.Viota}
We define as $\Viota_0$ an infinite set of first-order variables as usual; for variables in $\Viota_0$ we generally use the symbols $x,y,z,x_1,\ldots$. For $k \geq 1$ we define $\Viota_k$ as the set of all $k$-ary variable classes, where (for $k \geq 1$) an $k$-ary variable class is an infinite set of variables $X$ together with a bijective mapping $\iota_X\colon \Num^k \to X$; for the element $\iota_X(\bar{i}_1,\ldots,\bar{i}_k)$ in $X$ we simply write 
$X(\bar{i}_1,\ldots,\bar{i}_k)$ and call this element a {\em variable} in $X$. Variables $X(t_1,\ldots,t_k)$ and $Y(s_1,\ldots,s_l)$ are defined to be different if either $X \neq Y$ or $X=Y$, $l=k$ and $s_i \neq t_i$ for some $i$ in $\{1,\ldots,k\}$.	All $n$-ary variable classes are mutually disjoint.  The set $\Viota$ is defined as $\Union_{n \in \N}\Viota_n$.\\[1ex]
Let $X \in \Viota_k$ and $t_1,\ldots,t_k$ be terms in $T_0$; then the expression $X(t_1,\ldots,t_k)$ is called a {\em variable expression}. Note that for $t_1,\ldots,t_k \in \Num$ $X(t_1,\ldots,t_k)$ is a variable in $X$. The set of all variables (in some $X$) is denoted by $\Variota$. 
\end{definition}
Variable expressions of the form $X(t_1,\ldots,t_k)$ are crucial in the definition of formula schemata, enabling a schematic number of different variables in formulas. $X$ could be interpreted as a second order variable of type $\omega^k \to \iota$. However, we prefer the term {\em variable class} as we neither quantify over $X$ nor apply a second-order substitution on $X$. So $X$ is in some sense a {\em passive} second-order variable. In fact we will only substitute variable expressions by schematic terms, not the variable classes $X$ themselves.
\begin{definition}[individual terms]\label{def.Tiota}
The set $\Tiota$ of individual terms is defined inductively as follows.\\
$\Fiota_0 \IN \Tiota$,\\
all variable expressions are in $\Tiota$,\\
if $t_1,\ldots,t_k \in \Tiota$ and $f \in \Fiota_k$ then $f(t_1,\ldots,t_k) \in \Tiota$.\\
The subset of $\Tiota$ containing only variable expressions which are variables is denoted by $\Tiota_0$.
\end{definition}
In the definition of schematic individual terms we use a tupel of variables and a second one of parameters. The lengths of the two tuples are expressed in the concept of profile, where a {\em profile} is just a triple  $(i,j)$ for $i,j \in \N^+$.
\begin{definition}[schematic term symbols]\label{def.schem-t-symbols}
To each profile $\pi$ we assign an infinite set of symbols $\Fhat(\pi)$ which we call {\em term symbols}. The set obtained by taking a union over all $\Fhat(\pi)$ for profiles $\pi$ is denoted by $\Fhat$.
\end{definition}
Term symbols take over the role the symbols in $\Fomega$ played in the definition of $\Tomega$. 
\begin{definition}[schematic individual terms]\label{def.schem-ind-terms}
We define the set $\Tiotahat$ (the set of {\em schematic individual terms}) inductively as follows:\\
$\Tiota \IN \Tiotahat$.\\
If $t_1,\ldots,t_k \in \Tiotahat$ and $f \in \Fiota_k$ then $f(t_1,\ldots,t_k) \in \Tiotahat$.\\
If $\pi = (i,j)$ is a profile and $\that \in \Fhat(\pi)$, $t_l \in \Tiotahat$ for $l=1,\ldots,i$, and $s_1,\ldots,s_k \in \Tomega$ then 
$\that(t_1,\ldots,t_i,s_1,\ldots,s_j) \in \Tiotahat$.
\end{definition}
It remains to define corresponding equations to the symbols in $\Fhat$. So we have to define an $\iota$-theory for the symbols in 
$\Fhat(\pi)$ together with an $\omega$-theory for the symbols in $\Fomega$. We write $\Fhat(T)$ for the set of symbols in $\Fhat$ appearing in a set of terms $T \IN \Tiotahat$. 
\begin{definition}[$\omega\iota$-theory]\label{def.iotatheory}
A tupel $(T,T')$ is called an $\omega\iota$-theory of $\that$ if \\
$T = (\that,\Fcal,D(\Fcal),V,<)$, $T'\colon (\fhat,\Fcal',D(\Fcal'),<')$ is an $\omega$-theory.\\
$\that \in \Fcal$, $\Fcal$ is a finite subset of $\Fhat$, $V$ is a finite set of variable expressions.\\
$<$ is an irreflexive, transitive relation on $\Fcal \union \Tiota$ such that for all $\shat \in \Fcal\setminus \{\that\}\colon \shat<\that$ ($\that$ is maximal in $\Fcal$) and for all $\shat \in \Fcal$ and $t \in \Tiota$ we have $t < \shat$ (the terms in $\Tiota$ are smaller than all schematic term symbols).\\
$D(\Fcal) = \Union_{\shat \in \Fcal}D(\shat)$ where $D(\shat)$ is a set of two equations.  We assume that 
$\pi(\shat) = (i,j)$, $x_l \in \Viota_0$ for $l =1,\ldots,i$ and $n_1,\ldots,n_{j-1},n_j \in \Ncal$. 
Then $D(\shat)$ is defined as 
\[
\begin{array}{l}
\{\shat(x_1,\ldots,x_i,n_1,\ldots,n_{j-1},s(n_j)) = \shat_S\{z \ass \shat(x_1,\ldots,x_i,n_1,\ldots,n_{j-1},n_j)\},\\
 \shat(x_1,\ldots,x_i,n_1,\ldots,n_{j-1},\bar{0}) =\shat_B\}
\end{array}
\]
Here we require $\shat_S, \shat_B \in \Tiotahat(T,T')$, where $\Tiotahat(T,T')$ is the subset of all terms $t$  in $\Tiotahat$ such that 
$\Fhat(t) \IN \Fcal$ and $\Fomega(t) \IN \Fcal'$, \\
$\Viota(\shat_S) \IN \{x_1,\ldots,x_i,z\}$, $\Ncal(\shat_S) = \{n_1,\ldots,n_j\}$, and $z \in \Viota_0 \setminus \{x_1,\ldots,x_i\}$;
\\ 
$\Viota(\shat_B) \IN \{x_1,\ldots,x_i\}$, $\Ncal(\shat_B) \IN \{n_1,\ldots,n_{j-1}\}$, \\
Let $\Gcal = \Fhat(\shat_S) \union \Fhat(\shat_B)$; then,  for all $t \in \Gcal: t<\shat$.
\end{definition}
The semantics of terms in $\Tiota$ and $\Tiotahat$ is defined in~\cite{CLL.2021}. In contrast to an $\omega$-theory where, under parameter assignments, terms evaluate to numerals, terms in $\Tiotahat$ evaluate to ordinary first-order terms.\\[1ex]
In the next step we define {\em schematic formulas}. 
 Let $\Po_k$ be the set of of $k$-ary predicate symbols for every $k \geq 0$, where $\Po_0 = \{\top,\bot\}$ and $\Po_k$ is infinite for every $k \geq 1$.  
In contrast to term schemata we will make use of variable classes $\Viota_k$ for $k>0$ in the definition of recursive predicates. 
We introduce schematic predicate symbols via profiles, where predicate profiles are now a three tupel of numbers $(i,j,k)$ for $j,k>0$. 
\begin{definition}[schematic predicate symbols]\label{def.schem-p-symbols}
To each predicate profile $\pi$ we assign an infinite set of symbols $\Phat(\pi)$ which we call {\em schematic predicate symbols}. The set obtained by taking a union over all $\Phat(\pi)$ for profiles $\pi$ is denoted by $\Phat$.
\end{definition}
For defining recursions for schematic predicate symbols we need formula variables; the set of all formula variables is defined as $\FV$ and as a name for elements in $\FV$ we use $\xi,\xi_1,\ldots$.
\begin{definition}[basic formulas]\label{def.Fbasic}
Let $T^*\colon (T,T')$ be an $\omega\iota$-theory. The set $\Fob(T^*)$ of basic formulas in $T^*$ is defined inductively as follows.\\
$\FV \IN \Fob(T^*)$.\\
If $P \in \Po_k$ and $t_1,\ldots,t_k \in \Tiotahat(T^*)$ then $P(t_1,\ldots,t_k) \in \Fob(T^*)$.\\
If $F \in \Fob(T^*)$ then $\neg F \in \Fob(T^*)$.\\
If $F_1,F_2 \in \Fob(T^*)$ then $F_1 \land F_2, F_1 \lor F_2 \in \Fob(T^*)$.\\
 For the subset of $\Fob$ containing no formula variables we write $\Fobminus$.
\end{definition}
Below we define the class of quantifier-free schematic formulas which - for simplicity - we call schematic formulas.
\begin{definition}[schematic formulas]\label{def.schem-formulas}
We define the set $\Fhat$ (the set of {\em schematic formulas}) inductively as follows:\\
$\FV \IN \Fhat$.\\
If $P \in \Po_k$ and $t_1,\ldots,t_k \in \Tiotahat$ then $P(t_1,\ldots,t_k) \in \Fhat$.\\
If $\pi = (i,j,k)$ is a profile and $\phat \in \Phat(\pi)$, $X_l \in \Viota_i$ for $l=1,\ldots,j$, $t_1,\ldots,t_k \in \Tomega$ then 
$\phat(X_1,\ldots,X_j,t_1,\ldots,t_k) \in \Fhat$.\\
If $F \in \Fhat$ then $\neg F \in \Fhat$.
If $F_1,F_2 \in \Fhat$ then $F_1 \land F_2, F_1 \lor F_2 \in \Fhat$.
\end{definition}
The following concept of an $\omega\iota o$-theory is based on an extension of an $\omega\iota$-theory. 
\begin{definition}[$\omega\iota o$-theory]\label{def.omegaio-theory}
A tupel $T^+\colon (T,T',T'')$ is called an $\omega\iota o$-theory of $\phat$ if \\
$T = (\phat,\Pcal,D(\Pcal),V,<_p)$,\\
$(T',T'')$ is an $\omega\iota$-theory,\\
$T' = (\that,\Fcal,D(\Fcal),V',<)$, $T''\colon (\fhat,\Fcal',D(\Fcal'),<')$, \\
$\phat \in \Pcal$ and $\Pcal$ is a finite subset of $\Phat$,\\
$V$ is a finite subset of $\Viota_k$ for some $k \in \N$,\\
$<$ is an irreflexive, transitive relation on $\Pcal$ such that for all $\qhat \in \Pcal\setminus \{\phat\}\colon \qhat<\phat$ ($\phat$ is maximal in $\Pcal$). We extend $<$ to schematic formulas as follows: if $\qhat < \rhat$ then (for $\qhat \in \Phat(i,j,k)$ and $\rhat \in \Phat(i',j',k')$) then 
$$\qhat(X_1,\ldots,X_j,t_1,\ldots,t_k) < \rhat(X'_1,\ldots,X'_{j'},t'_1,\ldots,t'_{k'})$$  
for all $t_\alpha, t'_\beta$.\\
And for all schematic formulas $F'$ containing a symbol in $\Pcal$ and for all $F \in \Fob$ we have 
$F < F'$,  i.e. the formulas in $\Fob$ are smaller than all formulas containing schematic predicate symbols.\\[1ex]
$D(\Pcal) = \Union_{\qhat \in \Pcal}D(\qhat)$ where $D(\qhat)$ is a set of two equations.  We assume that 
$\pi(\qhat) = (k,i,j)$, $X_l \in V_k$ for $l =1,\ldots,i$ and $n_1,\ldots,n_j \in \Ncal$.\\ 
Then $D(\qhat)=$
\[
\begin{array}{l}
\{\qhat(X_1,\ldots,X_i,n_1,\ldots,n_{j-1},s(n_j)) = \qhat_S\{\xi \ass \qhat(X_1,\ldots,X_i,n_1,\ldots,n_{j-1},n_j)\},\\ 
\qhat(X_1,\ldots,X_i,n_1,\ldots,n_{j-1},\bar{0}) = \qhat_B \}.
\end{array}
\]
where we require $\qhat_S, \qhat_B \in \Fhat(T^+)$, where $\Fhat(T^+)$ is the subset of all formula schemata $F$ in $\Fhat$ such that 
$\Phat(F) \in \Pcal$, $\Fhat(F) \IN \Fcal$ and $\Fomega(F) \IN \Fcal'$, \\
$\Viota(\qhat_S) \IN \{X_1,\ldots,X_i\}$, $\FV(\qhat_S) \IN \{\xi\}$, $\Ncal(\qhat_S) = \{n_1,\ldots,n_j\}$; \\
$\Viota(\qhat_B) \IN \{X_1,\ldots,X_i\}$, $\FV(\qhat_B) = \emptyset$, $\Ncal(\qhat_B) \IN \{n_1,\ldots,n_{j-1}\}$.\\[1ex]
Let $\Gcal = \Phat(\qhat_S) \union \Phat(\qhat_B)$; then,  for all $r \in \Gcal: r <\qhat$.
\end{definition}
The semantics of $\omega \iota o$-theories is defined in~\cite{CLL.2021}. If $T^+$ is an $\omega \iota o$-theory, $F$ a formula in $T^+$,  and $\sigma$ is a parameter assignment, the evaluation of $F$ under $\sigma$ is denoted by $\sigma(F)\Eval_{T^+}$; $\sigma(F)\Eval_{T^+}$ is an ordinary quantifier-free first order formula. 
\begin{definition}\label{def.unsat}
Let $F$ be a formula in an $\omega \iota o$-theory $T^+$. Then $F$ is {\em unsatisfiable} if for all parameter assigments $\sigma$ the formula $\sigma(F)\Eval_{T^+}$ is unsatisfiable.
\end{definition}
For illustration we give an example of such a theory and an evaluation of a schematic formula under a parameter assignment.
\begin{examp}\label{ex.omegaiotao}
Let $T^+\colon (T,T',\emptyset)$ a triple such that 
\begin{eqnarray*}
T &=& (\phat,\{\phat\},D(\{\phat\}),\{X,Y\},\emptyset),\\
T' &=& (\fhat,\{\fhat\},D(\{\fhat\}),\{y\},\emptyset),\\
D(\phat) &=& \{\phat(X,s(n)) = Q(\fhat(X(n),n),Y(n)) \lor \phat(X,n),\ \phat(X,\bar{0}) = \neg P(X(\bar{0}))\},\\
D(\fhat) &=& \{\fhat(y,s(n)) = f(\fhat(y,n)),\ \fhat(y,\bar{0})=y\}.
\end{eqnarray*}
Then $T^+$ is an $\omega\iota o$-theory. Consider the formula $\phat(X,n)$ in $T^+$ and $\sigma(n) = \bar{3}$.  Then 
$$\sigma(\phat(X,n))\Eval_{T^+} = Q(f(f(X(\bar{2}))),Y(\bar{2})) \lor Q(f(X(\bar{1})),Y(\bar{1})) \lor Q(X(\bar{0}),Y(\bar{0})) \lor \neg P(X(\bar{0})).$$
The schematic formula $\phat(X,n)$ is satisfiable as, for $\sigma(n) = \bar{3}$, $\sigma(\phat(X,n))\Eval_{T^+}$ is satisfiable. 
\end{examp}
In Example~\ref{ex.omegaiotao} we have terms of the form $\fhat(X(n),n)$ which, due to the use of variable expressions, are not covered by Definition~\ref{def.schem-ind-terms}. This extension of the syntax of schematic terms is subject of the definition below.
\begin{definition}\label{def.ext-schem-terms}
Let $t$ be a term in $\Tiotahat(T^*)$ for a $\omega\iota$-theory $T^*$ and $\Viota(t) = \{x_1,\ldots,x_\alpha\}$; let $V_1,\ldots,V_\alpha$ be variable expressions and 
$\theta = \{x_1 \ass V_1,\ldots,x_\alpha \ass V_\alpha\}$. Then the expression $t\theta$ is called an {\em extended schematic term}. 
\end{definition}
Note that, due to the use of variable classes in the definition of $\omega\iota o$-theories, all terms occurring in formula schemata of a nontrivial theory (i.e. we have recursive definition of predicate symbols) are extended schematic terms.

\section{Schematic Substitution and Unification}\label{sec.sunification}

\subsection{Schematic Substitution}

Similar to terms and schematic terms {\em schematic substitutions} are expressions which evaluate to substitutions under parameter assignments. Below we define the syntactic expressions which will turn out useful in schematic refutational calculi.
\begin{definition}\label{def.s-substitution}
Let $T^*\colon (T_1,T_2)$ be an $\omega \iota$-theory. A mapping $\Theta\colon \Viota_0 \to \Tiotahat(T^*)$ is called an {\em s-substitution} over $T^*$ if $\Theta(x) \neq x$ only for finitely many $x \in \Viota_0$. We define the {\em domain} of $\Theta$ as 
\begin{eqnarray*}
\dom(\Theta) &=& \{x \mid x \in \Viota_0, \Theta(x) \neq x\} \mbox{ and the range of }\Theta \mbox{ as}\\
\rg(\Theta) &=& \{\Theta(x) \mid x \in \dom(\Theta)\}.
\end{eqnarray*}
If $\dom(\Theta) = \{x_1,\ldots,x_\alpha\}$ and $\Theta(x_i) = t_i$ for $i=1,\ldots,\alpha$ then we write $\Theta$ as $\{x_1 \ass t_1,\ldots, x_\alpha \ass t_\alpha\}$.
\end{definition}
\begin{examp}\label{ex.s-substitution}
Let $T^*\colon (T_1,T_2)$ be an $\omega \iota$-theory 
where $T_1 =  (\that,\{\that,\shat\},D(\{\that,\shat\}),\{x,y\},<)$, $\shat < \that$ and 
\begin{eqnarray*}
D(\that) &=& \{\that(x,y,n_1,s(n_2)) = g(\that(x,y,n_1,n_2),\shat(x,n_1)), \that(x,y,n_1,\bar{0}) = y\},\\
D(\shat) &=& \{\shat(x,s(n_1)) = h(\shat(x,n_1)),\ \shat(x,\bar{0}) = x\}.
\end{eqnarray*}
Let $u,v,x_1,x_2 \in \Viota_0$. Then we define 
$$\Theta = \{u \ass h(\that(x_1,x_2,n_1,n_2)),\ v \ass \shat(g(x_1,x_2),n_1)\}$$
$\Theta$ is an s-substitution over $T^*$ where 
$$\dom(\Theta) = \{u,v\},\ \rg(\Theta) = \{ h(\that(x_1,x_2,n_1,n_2)), \shat(g(x_1,x_2),n_1)\}.$$
\end{examp}
In the same way as schematic terms $t$ evaluate to first-order terms $\sigma(t)\EvalTstar$ under a parameter assignment $\sigma$, $s$-substitutions evaluate to ordinary first-order substitutions.
\begin{definition}\label{def.eval-s-subst}
Let $T^*\colon (T_1,T_2)$ be an $\omega \iota$-theory and $\Theta\colon \{x_1 \ass t_1,\ldots,x_\alpha \ass t_\alpha\}$ be an $s$-substitution over $T^*$. Then, for $\sigma \in \Scal$,  
$$\Theta[\sigma] = \{x_1 \ass \sigma(t_1)\EvalTstar,\ldots, x_\alpha \ass \sigma(t_\alpha)\EvalTstar\}$$
is called the $\sigma$-evaluation of $\Theta$, where $\sigma(t_i)\EvalTstar$ denotes the evaluation of $t_i$ under $\sigma$ in the theory $T^*$.
\end{definition}
\begin{proposition}\label{prop.s-subst}
Let $T^*$ be an $\omega \iota$-theory, $\sigma \in \Scal$ and $\Theta$ be an $s$-substitution over $T^*$. Then 
$\Theta[\sigma]$ is a first-order substitution.
\end{proposition}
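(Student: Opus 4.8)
The plan is to show that $\Theta[\sigma]$ satisfies the two defining conditions of a first-order substitution: that it maps each variable in its domain to a genuine first-order term, and that it is the identity on all but finitely many variables. The second condition is immediate, since by construction $\dom(\Theta[\sigma]) \subseteq \dom(\Theta) = \{x_1,\ldots,x_\alpha\}$ is finite; so the real content lies in verifying that each $\sigma(t_i)\EvalTstar$ is a well-defined ground-over-the-signature first-order term, i.e. an element of $\Tiota$ (or at least a term in the first-order language, free of schematic term symbols and parameters).

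First I would make precise what $\sigma(t)\EvalTstar$ denotes for a schematic individual term $t \in \Tiotahat(T^*)$: applying the parameter assignment $\sigma$ replaces every parameter in $\Ncal$ by a numeral, and the evaluation $\EvalTstar$ then rewrites the resulting term to normal form using the defining equations $D(\Fcal)$ and $D(\Fcal')$ of the $\omega\iota$-theory $T^*$. The key auxiliary fact I would isolate and appeal to — presumably established earlier in the development or taken as the evaluation semantics of schematic terms — is that for every $t \in \Tiotahat(T^*)$ and every $\sigma \in \Pas$, the evaluation $\sigma(t)\EvalTstar$ terminates and produces a term in $\Tiota$ containing no schematic term symbols from $\Fhat$ and no remaining parameters from $\Ncal$. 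The termination and elimination of schematic symbols is exactly what the well-foundedness of the ordering $<$ in the $\omega\iota$-theory (Definition~\ref{def.iotatheory}) guarantees: each application of a defining equation for $\shat$ either decreases the numeral argument in the last parameter position or replaces $\shat$ by strictly smaller symbols, so no infinite rewrite sequence is possible.

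The proof then proceeds by structural induction on the schematic term $t_i$, carried out relative to the well-founded ordering $<$ on $\Fcal \cup \Tiota$. In the base cases, $t_i$ is either a first-order term in $\Tiota$ already (in which case $\sigma$ and $\EvalTstar$ act trivially, or only on parameters appearing inside variable expressions, yielding again a first-order term), or a parameter-free atomic term; these evaluate to members of $\Tiota$ directly. In the step case, $t_i$ has the form $f(u_1,\ldots,u_k)$ with $f \in \Fiota_k$, or the form $\shat(u_1,\ldots,u_p,r_1,\ldots,r_q)$ with $\shat \in \Fhat$ a schematic term symbol. For the former I apply the induction hypothesis to each $u_j$ and use that $\Tiota$ is closed under application of $f \in \Fiota_k$. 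For the latter, $\sigma$ first reduces each numeric argument $r_l$ to a numeral, and then the defining equations unfold $\shat$ by primitive recursion on the last numeral; after finitely many unfolding steps (bounded by that numeral) the symbol $\shat$ is eliminated in favour of strictly $<$-smaller symbols and first-order material, and the induction hypothesis applies to the result.

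The main obstacle I anticipate is the careful bookkeeping in the schematic-symbol step case: one must argue simultaneously that the primitive-recursive unfolding halts and that it leaves behind only first-order terms, which requires combining the numeric induction on the numeral in the recursion argument with the $<$-induction on the symbols. The conditions in Definition~\ref{def.iotatheory} — namely $\Fhat(\shat_S) \cup \Fhat(\shat_B) = \Gcal$ with $t < \shat$ for all $t \in \Gcal$, together with $\Viota(\shat_S) \subseteq \{x_1,\ldots,x_i,z\}$ and $\Ncal(\shat_S) = \{n_1,\ldots,n_j\}$ — are exactly the syntactic restrictions that make this double induction go through, since they forbid the step term from reintroducing $\shat$ itself or fresh parameters. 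Once this lemma on the evaluation of schematic terms is in place, the proposition follows immediately: each $\Theta(x_i) = t_i$ lies in $\Tiotahat(T^*)$ by Definition~\ref{def.s-substitution}, so each $\sigma(t_i)\EvalTstar \in \Tiota$ is a first-order term, and hence $\Theta[\sigma] = \{x_1 \ass \sigma(t_1)\EvalTstar,\ldots,x_\alpha \ass \sigma(t_\alpha)\EvalTstar\}$ is a finite map from variables to first-order terms, i.e. a first-order substitution.
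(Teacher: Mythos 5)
Your proof is correct and follows essentially the same route as the paper: both reduce the proposition to the single key fact that $\sigma(t_i)\EvalTstar$ is a genuine first-order term (an element of $\Tiota_0$, i.e.\ free of schematic symbols and parameters) for each $t_i$ in the range of $\Theta$, after which finiteness of the domain makes the conclusion immediate. The only difference is that the paper simply cites~\cite{CLL.2021} for this evaluation lemma, whereas you sketch its proof via the well-founded ordering $<$ of the $\omega\iota$-theory combined with induction on the recursion numeral --- a correct, more self-contained justification of the same step.
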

\begin{proof}
Let $\Theta = \{x_1 \ass t_1,\ldots,x_\alpha \ass t_\alpha\}$ be an $s$-substitution over $T^*$ and $\sigma \in \Scal$. By~\cite{CLL.2021} $t'_i\colon \sigma(t_i)\EvalTstar \in \Tiota_0$ for $i=1,\ldots,\alpha$. Therefore $t'_i$ are first-order terms and 
\begin{eqnarray*}
\Theta[\sigma] &=& \{x_1 \ass t'_1,\ldots,x_\alpha \ass t'_\alpha\} \mbox{ is a first-order substitution}.
\end{eqnarray*}
\end{proof}
\begin{examp}\label{ex.eval-s-subst}
We take $T^*$ and $\Theta$ from Example~\ref{ex.s-substitution} and define $\sigma$ by $\sigma(n_1) = \bar{2},\sigma(n_2) = \bar{1}$. Then 
$$\Theta[\sigma] = \{u \ass h(\that(x_1,x_2,\bar{2},\bar{1}))\EvalTstar,\ v \ass \shat(g(x_1,x_2),\bar{2})\EvalTstar\}.$$
As $\that(x_1,x_2,\bar{2},\bar{1})\EvalTstar = g(x_2,h(h(x_1)))$ and $\shat(g(x_1,x_2),\bar{2})\EvalTstar = h(h(g(x_1,x_2)))$ we obtain 
$$\Theta[\sigma] = \{u \ass h(g(x_2,h(h(x_1)))), \ v \ass h(h(g(x_1,x_2)))\}.$$

\end{examp}

Now we have to define formally how $s$-substitutions apply to schematic terms and formulas.  In particular we define $s$-substitutions on terms in $\Tiota$ and $\Tiotahat$ first. In our first step we avoid terms containing variable classes, so our variables are  ordinary variables in $\Viota_0$.  

\begin{definition}[$s$- substitutions on individual terms]\label{def.ssub-Tiota}
Let $\Theta\colon \{x_1 \ass t_1,\ldots, x_\alpha \ass t_\alpha\}$ be an $s$-substitution, then we define 
\begin{itemize}
\item $c\Theta = c$ for $c \in \Fiota_0$,
\item if $x \in \Viota_0$ and $x \not \in \{x_1,\ldots,x_\alpha\}$ then $x\Theta = x$,
\item if $x = x_i \in \{x_1,\ldots,x_\alpha\}$ then $x_i \Theta = t_i$,
\item if $f \in \Fiota_k$ then $f(t_1,\ldots,t_k)\Theta =  f(t_1\Theta,\ldots,t_k\Theta)$.
\end{itemize}
\end{definition}
In the next step we extend $s$-substitutions to term schemata.
\begin{definition}\label{def.ssub-simple-schem-terms}
Let $\Theta$ be an $s$-substitution.
\begin{itemize}
\item If $t_1,\ldots,t_k \in \Tiotahat$ and $f \in \Fiota_k$ then $f(t_1,\ldots,t_k)\Theta = f(t_1\Theta,\ldots, t_k\Theta)$,
\item if $\pi = (j,k)$ is the profile of $\that \in \Fhat(\pi)$, $s_l \in \Tiotahat_s$ for $l=1,\ldots,j$, $t_1,\ldots,t_k \in \Tomega$ then
$$\that(s_1,\ldots,s_j,t_1,\ldots,t_k)\Theta = \that(s_1\Theta,\ldots,s_j\Theta,t_1,\ldots,t_k).$$
\end{itemize}
\end{definition}
\begin{examp}\label{ex.apply-s-substitution}
Let $T^*\colon (T_1,T_2)$ be the theory in Example~\ref{ex.s-substitution}. Then 
$$t\colon h(\that(\that(x,y,n,m),\shat(x,n),n,m)) \in \Tiotahat(T^*).$$
Let $\Theta = \{x \ass \shat(u,k), y \ass g(u,v)\}$. Then 
$$t\Theta = h(\that(\that(\shat(u,k),g(u,v),n,m),\shat(\shat(u,k),n),n,m)).$$
\end{examp}
Even when the theories do not contain $n$-ary variable expressions for $n>0$ (which concerns the recursive definition of schematic terms) the use of nontrivial variable expressions cannot be avoided due to the definition of formula schemata. We give an example below. 
\begin{examp}\label{ex.ssub-form-schema2}
Let $(T,T',\emptyset)$ the $\omega\iota o$-theory defined below. 
\begin{itemize}
\item $T = (\qhat,\{\qhat\},D(\{\qhat\}),\{X\},\emptyset)$ where $X \in \Viota_1$, $\pi(\qhat) = (1,1,1)$ and
\end{itemize}
$$D(\qhat) = \{ \qhat(X,s(n)) = \qhat(X,n) \lor Q(\that(X(s(n)),s(n))),\ \qhat(X,\bar{0}) = Q(X(\bar{0}))\}.$$
Here we have $\qhat_S  = \xi \lor Q(\that(X(s(n)),s(n)))$ for $\xi \in \FV$, $\qhat_B = Q(X(\bar{0}))$. 
\begin{itemize}
\item $T'=(\that,\{\that\},D(\{\that\}),\{x\},\emptyset)$ for $x \in \Viota_0$, $\pi(\that) = (1,1)$ and 
\end{itemize}
$$D(\that) = \{\that(x,s(n)) = h(\that(x,n)),\ \that(x,\bar{0}) = x \}.$$
$\that_S = h(z)$, $\that_B = x$.\\[1ex]
The theory $T^*\colon (T,T',\emptyset)$ contains $(T',\emptyset)$ which is an $\omega \iota$-theory. However, due to the definition of $\qhat$ we obtain schematic formulas of the form $\qhat(X,n) \lor Q(\that(X(s(n)),n))$ where the term $\that(X(s(n)),s(n))$ contains a unary variable expression - which is not a variable as $n$ is a parameter. In fact, $\that(X(s(n)),s(n))$ is an {\em extended schematic term} in $T^*$ as in Definition~\ref{def.ext-schem-terms}. 
\end{examp}
\begin{definition}[extended $s$-substitution]\label{def.extssub}
Let $T^*$ be an $\omega \iota o$-theory. An extended $s$-substitution over $T^*$ is an expression of the form 
$\{V_1 \ass t_1,\ldots, V_\alpha \ass t_\alpha\}$
where, for $i=1,\ldots,\alpha$,  $V_i$  are variable expressions and $t_i$ are extended schematic terms in $T^*$, and for all $\sigma \in \Scal$ and all $i,j \in \{1,\ldots,\alpha\}$ such that $i \neq j$ we have $\sigma(V_i)\Eval \neq \sigma(V_j)\Eval$. If there exists a $\sigma \in \Scal$ such that $\sigma(V_i)\Eval = \sigma(V_j)\Eval$ then we say that $\{V_i, V_j\}$ is {\em parameter-unifiable}. 
\end{definition}
Below we define the semantics of extended $s$-substitutions and prove that they evaluate to first-order substitutions by normalization under parameter assignments.
\begin{definition}\label{def.sem-extssub}
Let $\Theta\colon \{V_1 \ass t_1,\ldots, V_\alpha \ass t_\alpha\}$ be an extended $s$-substitution over a theory $T^*$ and let  $\sigma \in \Scal$. Then we define 
$\Theta[\sigma] = \{\sigma(V_1)\Eval \ass \sigma(t_1)\Eval, \ldots, \sigma(V_\alpha)  \ass \sigma(t_\alpha)\Eval\}.$
\end{definition}
\begin{proposition}
Let $\Theta$ be an extended $s$-substitution and $\sigma \in \Scal$. Then $\Theta[\sigma]$ is a first-order substitution.
\end{proposition}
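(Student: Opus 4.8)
The plan is to check directly that
$\Theta[\sigma]=\{\sigma(V_1)\Eval \ass \sigma(t_1)\Eval,\ldots,\sigma(V_\alpha)\Eval \ass \sigma(t_\alpha)\Eval\}$
meets the two defining requirements of a first-order substitution: its domain $\{\sigma(V_1)\Eval,\ldots,\sigma(V_\alpha)\Eval\}$ must consist of pairwise distinct first-order variables, and each range element $\sigma(t_i)\Eval$ must be a first-order term. These are exactly parts (i) and (ii) below, and together they are equivalent to $\Theta[\sigma]$ being a well-defined map from first-order variables to first-order terms.

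For the domain, each $V_i$ is a variable expression, say $V_i = X_i(r_1,\ldots,r_{k_i})$ with $X_i \in \Viota_{k_i}$ ($k_i \geq 1$) and $r_1,\ldots,r_{k_i} \in T_0$. Since $\sigma$ maps parameters to numerals and evaluates every term of $T_0$ to a numeral (Definition~\ref{def.parass}), evaluation distributes through the variable expression to give $\sigma(V_i)\Eval = X_i(\bar{m}_1,\ldots,\bar{m}_{k_i})$ with each $\bar{m}_l \in \Num$. By Definition~\ref{def.Viota} this is a variable in $X_i$, i.e. an element of $\Variota$, hence a first-order variable. Pairwise distinctness is immediate from the defining clause of an extended $s$-substitution (Definition~\ref{def.extssub}): for all $\sigma$ and all $i \neq j$ we have $\sigma(V_i)\Eval \neq \sigma(V_j)\Eval$. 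This clause is precisely what guarantees that $\Theta[\sigma]$ is a function rather than a mere relation.

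The substantive part is showing that each $\sigma(t_i)\Eval$ is a first-order term. By Definition~\ref{def.ext-schem-terms} every extended schematic term $t_i$ has the form $t_i = t'_i\theta_i$ with $t'_i \in \Tiotahat(T^*)$, $\Viota(t'_i) = \{y_1,\ldots,y_\beta\} \subseteq \Viota_0$, and $\theta_i = \{y_1 \ass W_1,\ldots,y_\beta \ass W_\beta\}$ replacing these ordinary variables by variable expressions. I would reduce the claim to the already available normalization result by a commutation lemma stating that $\sigma(t'_i\theta_i)\Eval = (\sigma(t'_i)\Eval)\rho_i$, where $\rho_i = \{y_1 \ass \sigma(W_1)\Eval,\ldots,y_\beta \ass \sigma(W_\beta)\Eval\}$ is, by the previous paragraph, a first-order substitution sending ordinary variables to first-order variables. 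Granting the lemma, the conclusion follows: by~\cite{CLL.2021} (as used in Proposition~\ref{prop.s-subst}) we have $\sigma(t'_i)\Eval \in \Tiota_0$, and applying the renaming $\rho_i$ to a first-order term only replaces its ordinary variables by class variables, so $(\sigma(t'_i)\Eval)\rho_i \in \Tiota_0$ is again a first-order term.

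The remaining step, and the one I expect to be the real obstacle, is the commutation lemma itself. I would prove it by well-founded induction along the ordering $<$ on the schematic term symbols of $T^*$, combined with structural induction on $t'_i$. The key observation is that $\theta_i$ substitutes only for ordinary variables in $\Viota_0$, and such variables occur exclusively in the individual-argument positions (of sort $\Tiota$/$\Tiotahat$) of schematic term symbols, never in their numeric-argument positions (of sort $\Tomega$); moreover the defining equations of Definition~\ref{def.iotatheory} carry the individual arguments passively and drive the unfolding solely on the last numeric argument. Consequently substituting variable expressions for ordinary variables commutes with each unfolding step, and the variable expressions introduced by $\theta_i$ evaluate under $\sigma$ to first-order variables that are inert with respect to normalization. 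The constant, variable, and $\Fiota$-application cases are routine; the schematic-symbol case is handled by unfolding one recursion step, applying the induction hypothesis to the smaller symbols and subterms it produces, and using that $\sigma$ evaluates the parameter-only numeric arguments independently of $\theta_i$.
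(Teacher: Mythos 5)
Your proof is correct and follows essentially the same route as the paper's: verify that the $\sigma(V_i)\Eval$ are pairwise distinct first-order variables (distinctness coming directly from the non-parameter-unifiability clause of Definition~\ref{def.extssub}) and that the $\sigma(t_i)\Eval$ are first-order terms. The only difference is that the paper simply asserts the latter in one line, whereas you supply the missing justification via the commutation lemma for extended schematic terms; that elaboration is sound and arguably an improvement in rigor, but it is not a different approach.
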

\begin{proof}
Let $\Theta = \{V_1 \ass t_1,\ldots, V_\alpha \ass t_\alpha\}$. In $\Theta[\sigma]$, $\sigma(V_i)\Eval$ are first-order variables and 
$\sigma(t_i)\Eval$ are first-order terms. Moreover, by Definition~\ref{def.extssub}, $\sigma(V_i)\Eval \neq \sigma(V_j)\Eval$ for $i \neq j$ and so all resulting first-order variables are pairwise different; hence $\Theta[\sigma]$ is a first-order substitution.
\end{proof} 
\begin{examp}
Let $\Theta = \{X(n,n) \ass g(X(\bar{0},n)),\  X(s(n),m) \ass h(X(m,m)), Y(m) \ass a\}$,
where $X,Y$ are variable classes and $g,h$ are function symbols.
$\Theta$ is an extended $s$-substitution: it is easy to see that for all $\sigma \in \Scal$ the sets $\{X(n,n),$ $X(s(n),$ $m)\}$, $\{X(n,n),$ $Y(m)\}$ and $\{X(s(n)$ $,m),$ $Y(m)\}$ are not parameter-unifiable. Let $\sigma \in \Scal$ such that $\sigma(n) = \bar{0}, \sigma(m) = \bar{1}$. Then 
$$\Theta[\sigma] = \{X(\bar{0},\bar{0}) \ass g(X(\bar{0},\bar{0})),  X(\bar{1},\bar{1}) \ass h(X(\bar{1},\bar{1})), Y(\bar{1}) \ass a\}.$$
Clearly, $\Theta[\sigma]$ is a first-order substitution which, under variable renaming, is equivalent to 
$$\{x \ass g(x),\ y \ass h(y), z \ass a\}.$$
\end{examp}
In extending schematic terms to extended schematic terms it makes sense to restrict the arguments of the variable expressions to simplify the handling of extended $s$-substitutions.  Below we define a restricted class which is large enough for schematic Ceres.
\begin{definition}\label{def.standard-term}
A term $t$ in $\Tiotahat$ is called a {\em standard term} if all for variable expressions $X(\vec{s})$ occurring in $t$ for $X \in \Viota_k$ and $k \geq 1$ the following conditions are fulfilled: 
\begin{itemize}
\item assigned to $X$ is a unique parameter list $(n_1,\ldots,n_k)$,  
\item  the only variable expressions starting with $X$ are of the form $X(r_1,\ldots,r_k)$  where each $r_i$ is of one of the forms $n_i$, 
$\bar{0}$, $p(n_i)$ or $s(n_i)$. We call these variable expressions {\em standard}.
\end{itemize}
\end{definition}
\begin{definition}\label{def.ssub-standard}
A {\em standard $s$-substitution} is an expression of the form 
$$\{X_1(r^1_1,\ldots,r^1_{\alpha_1}) \ass t_1, \ldots, X_k(r^k_1,\ldots,r^k_{\alpha_k}) \ass t_k\}$$
where
\begin{itemize}
\item  $X_1,\ldots,X_k$ are  variable classes,
\item the parameter list assigned to $X_i$ is $(n^i_1,\ldots,n^i_{\alpha_i})$ for $i \in \{1,\ldots,k\}$,
\item the $X_i(r^i_1,\ldots,r^i_{\alpha_i})$ are standard variable expressions, 
\item the $t_i$ are standard terms  in $\Tiotahat$,
\item the sets  $\{X_i(r^i_1,\ldots,r^i_{\alpha_i}), X_j(r^j_1,\ldots,r^j_{\alpha_j})\}$ for $i \neq j$ are not parameter-unifiable.
\end{itemize}
\end{definition}
\begin{examp}\label{ex.ssub-standard}
Let $X$ be a variable class with assigned parameter $n$, $Y$ be a variable class with assigned parameters $m,n$ and $t_1,t_2,t_3$ standard terms  in $\Tiotahat$. The $s$-substitution 
$$\{X(\bar{0}) \ass t_1, X(s(n)) \ass t_2, Y(n,m) \ass t_3\}$$  
is a standard $s$-substitution.
\end{examp}
Note that the parameter unifiability in Definition~\ref{def.ssub-standard} is easy to check: either the variable classes are different, of if they are equal and the expressions are of the form $X(r_1,\ldots,r_k),$ $X(s_1,\ldots,s_k)$ it suffices to find a pair $\{r_i,s_i\}$ which is not parameter unifiable. As $r_i$ and $s_i$ are of the form $n_i, p(n_i), s(n_i), \bar{0}$ this is trivial to check.\\[1ex]
Below we analyze the application of standard $s$-substitutions to standard terms. 
\begin{definition}[state]\label{def.state}
Let $X(r_1,\ldots,r_\alpha)$ be a standard variable expression. By definition we have $r_i \in \{n_i,p(n_i),s(n_i),\bar{0}\}$ for $i=1,\ldots,\alpha$. For every $i$ we define three conditions $C_{i,1},C_{i,2},C_{i,3}$ as follows:
$$C_{i,1} = n_i = \bar{0}, \quad C_{i,2} = n_i \neq \bar{0} \land p(n_i) = \bar{0}, \quad C_{i,3} = n_i \neq \bar{0} \land p(n_i) \neq \bar{0}.$$
Note that $\Ccal_i\colon \{C_{i,1},C_{i,2},C_{i,3}\}$ is a partition. \\[1ex]
A {\em state} for $X(r_1,\ldots,r_\alpha)$ is a conjunction of the form
$$D_1 \land \cdots \land D_\alpha$$
where for all $i =1,\ldots,\alpha$ $D_i \in \Ccal_i$. Note that there exist $3^\alpha$ different states (not all of them are satisfiable) and that the set of all states - written as $\Dcal(X(r_1,\ldots,r_\alpha))$ also forms a partition. Let $p$ be a state for 
$X(r_1,\ldots,r_\alpha)$, $\beta \leq \alpha$ such that $Y(r'_1\ldots,r'_\beta)$ is a standard variable expression; then $p$ is also a state for  $Y(r'_1\ldots,r'_\beta)$. 
\end{definition}
Application of standard $s$-substitutions to standard terms is not a trivial operation. E.g. just by writing $X(r,s)\Theta$ we cannot guarantee the uniqueness of application. 
\begin{examp}\label{ex.appl-non-unique}
Let $\Theta = \{X(n_1,n_2) \ass t\}$ for some term $t$ different from $X(n_1,n_2)$. Consider now the application 
$X(\bar{0},\bar{0})\Theta$.
If $\sigma(n)=\bar{0}$ and $\sigma(m) = \bar{0}$ then $X(\bar{0},\bar{0})\Theta[\sigma] = \sigma(t)\Eval$, for all other parameter assignments we get  $X_i(\bar{0},\bar{0})\Theta[\sigma]=  X_i(\bar{0},\bar{0})$. That means for the state $n=\bar{0} \land m = \bar{0}$ and we get $t$, and for all other states we obtain $X_i(\bar{0},\bar{0})$. We see that a unique result can only be obtained by taking into account the states.
\end{examp}
We are now going to define the application of a standard $s$-substitution w.r.t. the set of states of a standard variable expression. To this aim we define, for any state $p$, a replacement operator $\psi_p$ which performs a partial evaluation of terms and substitutions when the value of some parameters is $\bar{0}$. We define $\psi_p$ for terms in $\Tiotahat$. 
\begin{definition}\label{def.psi-p}
Let $X(r_1,\ldots,r_\alpha)$ be a standard variable expression over the parameters $\{n_1,\ldots,n_\alpha\}$ and let 
$$p\colon D_1 \land \cdots \land D_\alpha$$ 
be a state for $V\colon X(r_1,\ldots,r_\alpha)$. We define $\psi_p$ for $X(r_1,\ldots,r_\alpha)$. Let $p(i)$ be the $i$-th conjunction of $p$, i.e. $p(i) = D_i$. For every $p(i)$ we have to distinguish whether $r_i$ evaluates to $\bar{0}$ under a $\sigma \in \Scal$ such that $\sigma(p(i))  = \top$. For every $p$ we define $\psi_p(V)$ inductively. We define $\psi^0_p(V) = V$. Assume now that for all $j<i$ $\psi^j_p(V)$ are already defined.  We distinguish the following cases below: 
\begin{itemize}
\item If $p(i) = C_{i,1}$ (i.e. we have $p(i)\colon n_i = \bar{0}$) we define $\psi^i_p(V) = \psi^{i-1}_p(V)[r_i/\bar{0}]$ (i.e.  we replace $r_i$ by $\bar{0}$) if $r_i \in \{n_i,p(n_i),\bar{0}\}$ and $\psi^i_p(V) = \psi^{i-1}_p(V)[r_i/\bar{1}]$ if $r_i = s(n_i)$. 
\item If $p(i) = C_{i,2}$ (we have $p(i)\colon n_i \neq \bar{0} \land p(n_i) = \bar{0}$) we define $\psi^i_p(V) = \psi^{i-1}_p(V)[r_i/\bar{0}]$ if $r_i = p(n_i)$, $\psi^i(V) = \psi^{i-1}_p(V)[r_i/\bar{1}]$ if $r_i = n_i$, and 
$\psi^i_p(V) = \psi^{i-1}_p(V)[r_i/\bar{2}]$ if $r_i=s(n_i)$. For $r_i = \bar{0}$ we get $\psi^i_p(V) = \psi^{i-1}_p(V)$. 
\item If $p(i) = C_{i,3}$ (i.e. $p(i) = n_i \neq \bar{0} \land p(n_i) \neq \bar{0}$) then we define $\psi^i_p(V) = \psi^{i-1}_p(V)$.
\end{itemize}
Finally we define $\psi_p(V) = \psi^\alpha_p(V)$. \\[1ex]
We extend now $\psi_p$ to $\Tiota$:
\begin{itemize}
 \item If $t \in \Fiota_0$ then $\psi_p(t) = t$.
\item  For a standard variable expressions $X(r_1,\ldots,r_\alpha)$ $\psi_p(X(r_1,\ldots,r_\beta))$ is defined above.
\item If $t = f(t_1,\ldots,t_k)$ then $\psi_p(f(t_1,\ldots,t_k)) = f(\psi_p(t_1),\ldots,\psi_p(t_k))$.
\end{itemize}
Finally we extend $\psi_p$ to $\Tiotahat$:
\begin{itemize}
\item If $t \in \Tiota$ then $\psi_p(t)$ is defined as above.
\item If $t= f(t_1,\ldots,t_k)$  for $f \in \Fiota_k$ then 
$$\psi_p(f(t_1,\ldots,t_k)) =  f(\psi_p(t_1),\ldots,\psi_p(t_k)).$$
\item if $t =  \that(s_1,\ldots,s_j,t_1,\ldots,t_k)$ for a profile $\pi = (j,k)$ and $\that \in \Fhat(\pi)$, $s_l \in \Tiotahat$ for $l=1,\ldots,j$, $t_1,\ldots,t_k \in \Tomega$ then 
$$\psi_p(t)   =\that(\psi_p(s_1),\ldots,\psi_p(s_j),t_1,\ldots,t_k).$$
\end{itemize}
\end{definition}
\begin{examp}\label{ex.psi-p}
Let $t = h(X(n_1,n_2),Y(\bar{0},s(n_2)))$ for $h \in \Fiota_2$ and 
$p = n_1 \neq \bar{0} \land p(n_1) \neq \bar{0} \land n_2 = \bar{0}$. Then 
$$\psi_p(t) = h(X(n_1,\bar{0}),Y(\bar{0},\bar{1})).$$
\end{examp}
The terms $t$ and $\psi_p(t)$ evaluate to the same first-order term if $\sigma(p)\Eval = \top$, a property we will need in soundness results in this section.
\begin{lemma}\label{le.psi-p}
Let $t$ be a standard term in $\Tiotahat$ such that the set of parameters in $t$ is contained in $\Pcal\colon \{n_1,\ldots,n_\alpha\}$,  let $\Scal(p)$ be the set of all parameter assignments such that $\sigma(p)\Eval = \top$. Then 
$\sigma(t)\Eval =  \sigma(\psi_p(t))\Eval$ for  all states $p$ over $\Pcal$ and $\sigma \in \Scal(p)$.
\end{lemma}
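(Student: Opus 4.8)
The plan is to prove $\sigma(t)\Eval = \sigma(\psi_p(t))\Eval$ by structural induction on the standard term $t$, following the same case analysis by which Definition~\ref{def.psi-p} builds $\psi_p$ up from variable expressions through $\Tiota$ to $\Tiotahat$. Throughout, fix a state $p$ over $\Pcal$ and $\sigma \in \Scal(p)$. The whole soundness content lies in the base case of variable expressions; the recursive cases are congruence arguments that merely propagate the base case through the (rigid) function symbols of $\Fiota$ and through the normalization of schematic term symbols.

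For the base case, let $V\colon X(r_1,\ldots,r_\alpha)$ be a standard variable expression, so by Definition~\ref{def.standard-term} each $r_i \in \{n_i,p(n_i),s(n_i),\bar{0}\}$ and the parameter $n_i$ occurs in $V$ only at position $i$. Since a variable expression is evaluated by evaluating its argument tuple (the class symbol itself is never rewritten), it suffices to prove, for each $i$, that $\sigma(r_i)\Eval = \sigma(r'_i)\Eval$, where $X(r'_1,\ldots,r'_\alpha) = \psi_p(V)$ results from the replacements of Definition~\ref{def.psi-p}. I would check this by running through the three conjuncts $C_{i,1},C_{i,2},C_{i,3}$ (the partition of Definition~\ref{def.state}) against the four shapes of $r_i$, using only the semantics of $\bar{0}$, $s$, $p$ from Definition~\ref{def.parass}. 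For example, if $p(i) = C_{i,1}$ then $\sigma(n_i) = \bar{0}$, hence $\sigma(n_i)\Eval = \sigma(p(n_i))\Eval = \sigma(\bar{0})\Eval = \bar{0}$ and $\sigma(s(n_i))\Eval = \bar{1}$, which is exactly the prescribed replacement; the case $C_{i,2}$ forces $\sigma(n_i) = \bar{1}$ (as $\sigma(n_i) \neq \bar{0}$ and $p(\sigma(n_i))\Eval = \bar{0}$) and is analogous, while under $C_{i,3}$ no replacement is made and the equality is trivial. Because the rewrite at position $i$ touches only the subterm $r_i$, whose sole parameter is $n_i$, distinct positions do not interfere, so the iterated construction $\psi^0_p,\ldots,\psi^\alpha_p$ realizes the desired componentwise replacement and $\sigma(V)\Eval = \sigma(\psi_p(V))\Eval$ follows.

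For the recursive cases, if $t = f(t_1,\ldots,t_k)$ with $f \in \Fiota_k$ then $f$ is rigid, so both $\sigma(\cdot)\Eval$ and $\psi_p$ distribute over $f$ and the claim is immediate from the induction hypotheses on $t_1,\ldots,t_k$. The remaining and principal case is $t\colon \that(s_1,\ldots,s_j,t_1,\ldots,t_k)$, where $\psi_p(t) = \that(\psi_p(s_1),\ldots,\psi_p(s_j),t_1,\ldots,t_k)$ rewrites only the individual arguments and leaves the numeric arguments $t_1,\ldots,t_k \in \Tomega$ untouched. Here $\sigma(\cdot)\Eval$ cannot be pushed inside $\that$ directly, since $\that$ is recursively defined and its normalization rewrites the term.

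The hard part will be this schematic case, which I would settle via a congruence property of normalization: whenever $\sigma(s_l)\Eval = \sigma(s'_l)\Eval$ for all $l$, one has $\sigma(\that(s_1,\ldots,s_j,t_1,\ldots,t_k))\Eval = \sigma(\that(s'_1,\ldots,s'_j,t_1,\ldots,t_k))\Eval$. Since the numeric arguments, and hence their values $\sigma(t_1)\Eval,\ldots,\sigma(t_k)\Eval$, coincide on both sides, the number of unfoldings of the defining equations $D(\that)$ and the shape of the resulting context are identical; the individual arguments enter only by being substituted for the individual variables of $\that$ into the unfolded body. I would establish this congruence by a sub-induction on the controlling numeral $\sigma(t_k)\Eval$: the base equation $\that(\ldots,\bar{0}) = \that_B$ substitutes the arguments directly, while the step equation replaces the recursion placeholder $z$ by $\that$ with decremented last argument (to which the sub-induction hypothesis applies) and again substitutes into $\that_S$. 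Feeding the outer induction hypothesis $\sigma(s_l)\Eval = \sigma(\psi_p(s_l))\Eval$ into this congruence closes the case. Throughout I rely on the normalization semantics of schematic terms from~\cite{CLL.2021}, in particular that $\sigma(t)\Eval$ is a first-order term in $\Tiota_0$ and that the recursion is driven solely by the evaluated numeric arguments.
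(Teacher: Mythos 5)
Your proof is correct and follows essentially the same route as the paper's: both reduce the claim to variable expressions and then verify, for each component $r_i$, the three state conditions $C_{i,1},C_{i,2},C_{i,3}$ against the admissible shapes $n_i,p(n_i),s(n_i),\bar{0}$. The only difference is that you make explicit the congruence-of-normalization argument needed for the $\that$-case, which the paper compresses into the single phrase ``by the homomorphic extension of $\psi_p$ from variable expressions to terms''; your sub-induction on the controlling numeral fills in that step but does not change the overall argument.
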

\begin{proof}
By the homomorphic extension of $\psi_p$ from variable expressions to terms, it is enough to show that for all variable expressions 
$X(r_1,\ldots,r_\beta)$ occurring in $t$ (where $\beta \leq \alpha$) we obtain  
$$(\star)\ \sigma(X(r_1,\ldots,r_\beta))\Eval =  \sigma(\psi_p(X(r_1,\ldots,r_\beta)))\Eval \mbox{ for } \sigma \in \Scal(p).$$ 
So let us assume we have the variable expression $X(r_1,\ldots,r_\beta)$ in $t$. In order to prove $(\star)$ we recall the definition of 
$\psi_p$ via $\psi^i_p$ as in Definition~\ref{def.psi-p} and proceed by induction on $i$. \\[1ex]
$i = 0$: by definition we have $\psi^0_p(X(r_1,\ldots,r_\beta)) = X(r_1,\ldots,r_\beta)$ and $(\star)$ trivially holds when we replace 
$\psi_p$ by $\psi^0_p$. Now let us assume that 
$${\rm (IH)}\ \sigma(X(r_1,\ldots,r_\beta))\Eval =  \sigma(\psi^j_p(X(r_1,\ldots,r_\beta)))\Eval \mbox{ for } j<i.$$
By definition of the $\psi^j_p$ there exist terms $r'_1,\ldots,r'_{i-1}$ such that 
$$\psi^{i-1}_p(X(r_1,\ldots,r_\beta)) = X(r'_1,\ldots,r'_{i-1},r_i,\ldots,r_\beta).$$
In order to define $\psi^i_p$ from $\psi^{i-1}_p$ we have to consider the condition $p(i)$. We distinguish three cases 
\begin{itemize}
\item $p(i) = n_i = \bar{0}$: as $\sigma(p)\Eval = \top$ we also have $\sigma(p(i))\Eval = \top$ and therefore $\sigma(n_i) = \bar{0}$. By definition of $\psi^i_p$ we have 
$$\psi^i_p(X(r'_1,\ldots,r'_{i-1},r_i,\ldots,r_\beta) = X(r'_1,\ldots,r'_{i-1},\bar{0},\ldots,r_\beta)$$ 
if $r_i \in \{n_i,p(n_i),\bar{0}\}$; in this case we obtain $\sigma(r_i)\Eval = \sigma(\bar{0})\Eval = \bar{0}$ and therefore 
 $$\sigma(X(r_1,\ldots,r_\beta))\Eval =  \sigma(\psi^i_p(X(r_1,\ldots,r_\beta)))\Eval .$$
If $r_i = s(n_i)$ we obtain 
$$\psi^i_p(X(r'_1,\ldots,r'_{i-1},r_i,\ldots,r_\beta) = X(r'_1,\ldots,r'_{i-1},\bar{1},\ldots,r_\beta).$$ 
But then $\sigma(r_i)\Eval = \bar{1} = \sigma(\bar{1})\Eval$, and again 
 $$\sigma(X(r_1,\ldots,r_\beta))\Eval =  \sigma(\psi^i_p(X(r_1,\ldots,r_\beta)))\Eval .$$
\item $p(i) = n_i \neq \bar{0} \land p(n_i) = \bar{0}$: as $\sigma(p)\Eval = \top$ we get $\sigma(n_i)=\bar{1}, \sigma(p(n_i))= \bar{0}$. By definition of $\chi$ we obtain 
\begin{eqnarray*}
\psi^i_p(X(r'_1,\ldots,r'_{i-1},r_i,\ldots,r_\beta)) &=& X(r'_1,\ldots,r'_{i-1},\bar{0},\ldots,r_\beta) \mbox{ if } r_i = p(n_i),\\
                        &=& X(r'_1,\ldots,r'_{i-1},\bar{1},\ldots,r_\beta) \mbox{ if } r_i = n_i,\\
                        &=& X(r'_1,\ldots,r'_{i-1},\bar{2},\ldots,r_\beta) \mbox{ if } r_i = s(n_i),\\                      
                        &=& \psi^{i-1}p (X(r'_1,\ldots,r'_{i-1},r_i,\ldots,r_\beta))\mbox{ if } r_i = \bar{0}.
\end{eqnarray*}
It is easy to see, that by $\sigma(C_{i,2})=\top$ we obtain 
 $$\sigma(X(r_1,\ldots,r_\beta))\Eval =  \sigma(\psi^i_p(X(r_1,\ldots,r_\beta)))\Eval .$$
\item $p(i) = n_i \neq \bar{0} \land p(n_i) \neq \bar{0}$: by definition of $\psi_p$ we get $\psi^i_p = \psi^{i-1}_p$ and thus by (IH) 
$$\sigma(X(r_1,\ldots,r_\beta))\Eval =  \sigma(\psi^i_p(X(r_1,\ldots,r_\beta)))\Eval .$$
\end{itemize}
\end{proof} 
We extend $\psi_p$ further to standard $s$-substitutions:
\begin{definition}\label{def.psi-to-subst}
Let $\Theta\colon \{A_1 \ass t_1,\ldots, A_\alpha \ass t_\alpha\}$ be a standard $s$-substitution over the parameters $\{n_1,\ldots,n_\beta\}$ and $p$ be a state over these parameters. Then
$$\psi_p(\Theta) = \{\psi_p(A_1) \ass \psi_p(t_1),\ldots, \psi_p(A_\alpha) \ass \psi_p(t_\alpha)\}.$$
\end{definition}
\begin{examp}\label{ex.psi-to-subst}
Let $\Theta = \{X(\bar{0}) \ass Z(n_1),\  X(s(n_1)) \ass g(Z(n_1)),\  Y_1(n_1,n_2) \ass h(Y_2(n_1,n_2))\}$ and $p = n_1 \neq \bar{0} \land p(n_1) = \bar{0} \land n_2 \neq \bar{0} \land p(n_2) \neq\bar{0}.$
Then 
$$\psi_p(\Theta) =  \{X(\bar{0}) \ass Z(\bar{1}),\  X(\bar{2}) \ass g(Z(\bar{1})),\  Y_1(\bar{1},n_2) \ass h(Y_2(\bar{1},n_2))\}.
$$
\end{examp}
\begin{proposition}\label{psi-subst}
Let $\Theta$ be a standard $s$-substitution, $p$ be a state over the parameters of $\Theta$. Then $\psi_p(\Theta)$ is an extended $s$-substitution and for all parameter assignments $\sigma \in \Scal(p)$ we have $\Theta[\sigma] = \psi_p(\Theta)[\sigma]$.
\end{proposition}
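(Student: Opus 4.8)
The statement asserts two things, and the plan is to treat them separately: that $\psi_p(\Theta)$ is again an extended $s$-substitution, and that $\Theta[\sigma] = \psi_p(\Theta)[\sigma]$ for every $\sigma \in \Scal(p)$. I would dispatch the equality first, as it is essentially a direct consequence of Lemma~\ref{le.psi-p}. Writing $\Theta = \{A_1 \ass t_1,\ldots,A_\alpha \ass t_\alpha\}$, the $\sigma$-evaluation $\Theta[\sigma]$ binds $\sigma(A_i)\Eval$ to $\sigma(t_i)\Eval$, whereas $\psi_p(\Theta)[\sigma]$ binds $\sigma(\psi_p(A_i))\Eval$ to $\sigma(\psi_p(t_i))\Eval$. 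Since each standard variable expression $A_i$ is itself a standard term and each $t_i$ is a standard term, applying Lemma~\ref{le.psi-p} to $A_i$ and to $t_i$ gives $\sigma(A_i)\Eval = \sigma(\psi_p(A_i))\Eval$ and $\sigma(t_i)\Eval = \sigma(\psi_p(t_i))\Eval$ for all $\sigma \in \Scal(p)$. Hence the two substitutions agree componentwise, which is the desired equality.

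For the first claim I would verify the three requirements of Definition~\ref{def.extssub} for $\psi_p(\Theta) = \{\psi_p(A_1) \ass \psi_p(t_1),\ldots,\psi_p(A_\alpha) \ass \psi_p(t_\alpha)\}$. That each $\psi_p(A_i)$ is again a variable expression is immediate: $\psi_p$ leaves the variable class fixed and only rewrites each argument into a term of $T_0$ (one of $n_l, p(n_l), s(n_l), \bar{0}, \bar{1}, \bar{2}$). That each $\psi_p(t_i)$ is an extended schematic term follows because $\psi_p$ acts homomorphically on $\Tiotahat$ (Definition~\ref{def.psi-p}), preserving the term structure and modifying only the arguments occurring inside variable expressions.

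The crux is the third requirement: the $\psi_p(A_i)$ must remain pairwise non-parameter-unifiable \emph{for every} $\sigma \in \Scal$, so here Lemma~\ref{le.psi-p}, which only controls $\sigma \in \Scal(p)$, does not apply and a direct argument is needed. For $A_i, A_j$ with distinct variable classes the claim is trivial, since $\psi_p$ preserves the class. If they share a class $X$ with parameter list $(n_1,\ldots,n_\gamma)$, then, as distinct argument positions carry distinct parameters, non-parameter-unifiability of $\{A_i,A_j\}$ amounts to the existence of a single position $l^*$ at which $\{r^i_{l^*},r^j_{l^*}\}$ is non-parameter-unifiable; a short check shows the only such pairs over one parameter $n$ are $\{n,s(n)\}$, $\{p(n),s(n)\}$ and $\{s(n),\bar{0}\}$. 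I would then run through the three state conditions at $l^*$: under $C_{l^*,3}$ the pair is unchanged and stays non-unifiable, while under $C_{l^*,1}$ and $C_{l^*,2}$ the operator $\psi_p$ replaces both arguments by numerals that a direct computation shows to be distinct (for instance $\{n,s(n)\} \mapsto \{\bar{0},\bar{1}\}$ under $C_{l^*,1}$ and $\{\bar{1},\bar{2}\}$ under $C_{l^*,2}$), so the resulting pair is non-unifiable for every $\sigma$. Thus position $l^*$ witnesses non-unifiability of $\{\psi_p(A_i),\psi_p(A_j)\}$, completing the verification.

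The step I expect to be the main obstacle is exactly this last case analysis: one must ensure that the partial evaluation $\psi_p$ neither collapses the distinguishing arguments at $l^*$ nor turns a previously non-unifiable pair into a unifiable one for the parameter assignments lying outside $\Scal(p)$. Preservation of the witnessing position $l^*$ is what makes the argument succeed; the finiteness of the four argument shapes $\{n_l, p(n_l), s(n_l), \bar{0}\}$ and the three state conditions then reduces the remaining work to a routine, if slightly tedious, check.
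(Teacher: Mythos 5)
Your proof is correct, and for the substantive half of the proposition it takes a genuinely different route from the paper's. For the equality $\Theta[\sigma] = \psi_p(\Theta)[\sigma]$ on $\Scal(p)$ the two arguments coincide: a componentwise appeal to Lemma~\ref{le.psi-p} applied to the $A_i$ and the $t_i$. The difference lies in showing that the domain expressions $\psi_p(A_i)$ remain pairwise non-parameter-unifiable over \emph{all} of $\Scal$. The paper argues by contradiction: it supposes some $\sigma$ identifies $\sigma(\psi_p(A_i))\Eval$ and $\sigma(\psi_p(A_j))\Eval$, splits the argument positions into those left untouched by $\psi_p$ and those turned into numerals, and constructs a modified assignment $\sigma'$ which would already identify $\sigma'(A_i)\Eval$ and $\sigma'(A_j)\Eval$, contradicting that $\Theta$ is a standard $s$-substitution. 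You instead argue forwards and locally: since distinct argument positions carry distinct parameters, non-parameter-unifiability of $\{A_i,A_j\}$ is witnessed by a single position $l^*$; you enumerate the three non-parameter-unifiable argument pairs $\{n,s(n)\}$, $\{p(n),s(n)\}$, $\{s(n),\bar{0}\}$ and check, under each of the three state conditions at $l^*$, that $\psi_p$ sends the witnessing pair either to itself or to two distinct numerals, hence to a pair that is still non-unifiable for every $\sigma$. Your version buys explicitness --- the witness is tracked concretely and the verification reduces to a small finite table --- and it sidesteps the somewhat delicate construction of $\sigma'$; the paper's version is more uniform in that it never needs the classification of non-unifiable argument pairs. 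Both rest on the same underlying fact, namely that $\psi_p$ never collapses arguments that disagree under every parameter assignment.
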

\begin{proof}
Let $\Theta = \{A_1 \ass t_1,\ldots, A_\alpha \ass t_\alpha\}$. Then 
$$\psi_p(\Theta) = \{\psi_p(A_1) \ass \psi_p(t_1), \ldots, \psi_p(A_\alpha) \ass \psi_p(t_\alpha)\}.$$
As $\Theta$ is a standard $s$-substitution we have for all $A_i,A_j$ such that $i \neq j$ and all $\sigma \in \Scal$ that $\sigma(A_i)\Eval \neq \sigma(A_j)\Eval$. Now consider $A_i,A_j$ for  $i \neq j$ and $\psi_p(A_i),\psi_p(A_j)$. Now assume that $\sigma(\psi_p(A_i))\Eval = \sigma(\psi_p(A_j))\Eval$ for some $\sigma \in \Scal$. Let $A_i = X(r_1,\ldots,r_\alpha)$ and $A_j = X(s_1,\ldots,s_\alpha)$. 
Then $\psi_p(A_i) = X(r_1',\ldots,r'_\alpha)$ and $\psi_p(A_j) = X(s'_1,\ldots,s'_\alpha)$ where, for all $i=1,\ldots,\alpha$ either $s_i = s'_i, r_i = r'_i$ or $r'_i \in \{\bar{0},\bar{1},\bar{2}\}$ and $s'_i \in \{\bar{0},\bar{1},\bar{2}\}$. As $\sigma(\psi_p(A_i))\Eval = \sigma(\psi_p(A_j))\Eval$ we have if, for some $i$, $r'_i \in \{\bar{0},\bar{1},\bar{2}\}$ and $s'_i \in \{\bar{0},\bar{1},\bar{2}\}$ ($s'_i,r'_i$ are thus numerals) then $r'_i = s'_i$. Now let $J \IN I$ (for $I = \{1,\ldots,\alpha\}$) be the subset of positions in $\{1,\ldots,\alpha\}$ where $r'_i = s'_i$ and $r'_i,s'_i \in \{\bar{0},\bar{1},\bar{2}\}$
As $\sigma(\psi_p(A_i))\Eval = \sigma(\psi_p(A_j))\Eval$ we must have $\sigma(r_j)\Eval = \sigma(s_j)\Eval$ for all $j \in I \setminus J$. 
We define now $\sigma'(n_j) = \sigma(n_j)$ for $j \in I \setminus J$ and $\sigma'(n_j)$ for $j \in J$ is defined in a way that 
$\sigma'(r_j)\Eval = \bar{k}$ for $r'_j = \bar{k}$. 
Then 
$$\sigma'(A_i)\Eval = \sigma'(\psi_p(A_i))\Eval = \sigma'(\psi_p(A_j))\Eval = \sigma'(A_j)\Eval$$
contradicting the assumption that $\Theta$ is an $s$-substitution.\\[1ex]
That for $\sigma \in \Scal(p)$ we have $\Theta[\sigma] = \psi_p(\Theta)[\sigma]$ is trivial by 
$$\sigma(A_i)\Eval = \sigma(\psi_p(A_i))\Eval \mbox{ and } \sigma(t_i)\Eval = \sigma(\psi_p(t_i))\Eval \mbox{ for }i=1,\ldots,\alpha$$ 
which follows from Lemma~\ref{le.psi-p}. 
\end{proof}
We are now in the possession of the tools to define the application of a standard $s$-substitution to a standard term.
\begin{definition}\label{def.appl-standard}
Let $\Theta$ be a standard $s$-substitution and $t$ be a standard term and $\Dcal$ the set of states over the parameters occurring in $\Theta$ and $s$. Then we define 
$$\Ap(\Theta,t) = \{p\colon \psi_p(t)\psi_p(\Theta) \mid p \in \Dcal\}.$$
\end{definition}
Note that we can compactify the representation of $\Ap(\Theta,t)$ in Definition~\ref{def.appl-standard}. If there are just a few states, let us call them $\{p_1,\ldots,p_\beta\}$, such that $\psi_p(t)\psi_p(\Theta) \neq t$ for $p \in \{p_1,\ldots,p_\beta\}$ and 
$\psi_{p}(t)\psi_p(\Theta) = t$ for $p \in \Dcal' = \Dcal \setminus \{p_1,\ldots,p_\beta\}$ we can write  
$$\Ap(\Theta,t) = \{p_1\colon \psi_{p_1}(t)\psi_{p_1}(\Theta),\ldots, p_\beta\colon \psi_{p_\beta}(t)\psi_{p_\beta}(\Theta)\} \union \{\Dcal'\colon t \}.$$
\begin{definition}[semantics of $\Ap$]\label{def.sem-appl-standard}
Let $\Ap(\Theta,t)$ be defined as in Definition~\ref{def.appl-standard}. Then 
\begin{eqnarray*}
\Ap(\Theta,t)|_p &=& \psi_p(t)\psi_p(\Theta),\\
\Ap(\Theta,t)[\sigma] &=& \sigma(\Ap(\Theta,t)|_p)\Eval \mbox{ if } \sigma(p)\Eval = \top.
\end{eqnarray*}
\end{definition}
Note that, as the set of all states form a partition, $\Ap(\Theta,t)[\sigma]$ in Definition~\ref{def.sem-appl-standard} is defined for all $\sigma \in \Scal$.
\begin{examp}\label{ex.appl-standard}
Let $\Theta = \{X(n_1,n_2) \ass g(Y(n_1))\}$ like in Example~\ref{ex.appl-non-unique}. Consider now the syntactic application 
$X(\bar{0},\bar{0})\Theta$ which yields $X(\bar{0},\bar{0})$. Obviously $X(\bar{0},\bar{0})\Theta$ is unsound for $\sigma(n_1) = \sigma(n_2) = \bar{0}$ as then 
$$\sigma(X(\bar{0},\bar{0})\Theta)\Eval  = X(\bar{0},\bar{0}) \neq g(Y(\bar{0})) = X(\bar{0},\bar{0})\Theta[\sigma].$$
The problem is resolved with the states $p \in \Dcal(X(n_1,n_2))$.
Let $p_1 = n_1 = \bar{0} \land n_2 = \bar{0}$, then 
$$\psi_p(X(\bar{0},\bar{0}))\psi_p(\Theta) = X(\bar{0},\bar{0})\{X(\bar{0},\bar{0}) \ass g(Y(\bar{0})) = g(Y(\bar{0})) .$$
For the other $8$ states $p' \in \{p_2,\ldots,p_9\}$ we get 
\[
\begin{array}{l}
\psi_{p'}(X(\bar{0},\bar{0}))\psi_{p'} (\Theta) = X(\bar{0},\bar{0}), \mbox{ and therefore}\\
\Ap(\Theta,t) = \{p_1\colon g(Y(\bar{0}))\} \union \{\{p_2,\ldots,p_9\}\colon X(\bar{0},\bar{0})\}. 
\end{array}
\]
\end{examp}
We show now that the definition of application is sound. To this aim we need the following lemma which shows that {\em syntactic} application of substitutions is sound for terms $\psi_p(t)$ and substitutions $\psi_p(\Theta)$.
\begin{lemma}\label{le.psi-p2}
Let $\Theta$ be a standard $s$-substitution and $t$ be a standard term. Then for all states $p$ over the parameters of $\Theta$ and $t$ and for all $\sigma \in \Scal(p)$ we have $\sigma(\psi_p(t))\Eval \psi_p(\Theta)[\sigma] = \sigma(\psi_p(t)\psi_p(\Theta))\Eval$. 
\end{lemma}
\begin{proof}
It is enough to show that $\sigma(\psi_p(B))\Eval\psi_p(\Theta)[\sigma] =  \sigma(\psi_p(B)\psi_p(\Theta))\Eval$ for variable expressions $B$ and $\sigma \in \Scal(p)$. So let $B = X(r_1,\ldots,r_\beta)$ and 
$\Theta = \{A_1 \ass t_1,\ldots, A_\gamma \ass t_\gamma\}$,
where $B$ and $\Theta$ range over the parameters $\{n_1,\ldots,n_\alpha\}$. If $X$ does not occur in the domain of $\Theta$ then for all $\sigma$
$$\sigma(\psi_p(B))\Eval\psi_p(\Theta)[\sigma] = \sigma(\psi_p(B))\Eval = \sigma(\psi_p(B)\psi_p(\Theta))\Eval$$
and the lemma trivially holds. Therefore it remains to assume $X(s_1,\ldots,s_\beta) \ass t \in \Theta$ where $t$ is some of the $t_1,\ldots,t_\gamma$. Let $r'_i = \psi_p(r_i)$ and $s'_i = \psi_p(s_i)$ for $i=1,\ldots,\beta$. We show that
\begin{itemize}
\item $X(r'_1,\ldots,r'_\beta) = X(s'_1,\ldots,s'_\beta)$ iff there exists a $\sigma \in \Scal(p)$ such that 
$$(\star)\sigma(X(r'_1,\ldots,r'_\beta))\Eval = \sigma(X(s'_1,\ldots,s'_\beta))\Eval.$$
\end{itemize}
The direction from left to right is trivial. So it is enough to prove that the existence of a $\sigma$ which yields $(\star)$ implies that 
$X(r'_1,\ldots,r'_\beta) = X(s'_1,\ldots,s'_\beta)$. We assume $(\star)$ and $X(r'_1,\ldots,r'_\beta) \neq X(s'_1,\ldots,s'_\beta)$ and derive a contradiction. So let us assume that $r'_i \neq s'_i$ and consider the definition of $\psi^i_p(r'_i),\psi^i(s'_i)$ in Definition~\ref{def.psi-p}: 
\begin{itemize}
\item If $p(i) = C_{i,1}$ (i.e. we have $p(i)\colon n_i = \bar{0}$) we define $\psi^i_p(V) = \psi^{i-1}_p(V)[r_i/\bar{0}]$ (i.e.  we replace $r_i$ by $\bar{0}$) if $r_i \in \{n_i,p(n_i),\bar{0}\}$ and $\psi^i_p(V) = \psi^{i-1}_p(V)[r_i/\bar{1}]$ if $r_i = s(n_i)$. That means that, in this case $r'_i$ and $s'_i$ are constant symbols and, as $\sigma(X(r'_1,\ldots,r'_\beta))\Eval = \sigma(X(s'_1,\ldots,s'_\beta))\Eval$ and thus also $\sigma(r'_i)\Eval = \sigma(s'_i)\Eval)$ - which implies that $r'_i = s'_i$.
\item If $p(i) = C_{i,2}$ (we have $p(i)\colon n_i \neq \bar{0} \land p(n_i) = \bar{0}$) we define $\psi^i_p(V) = \psi^{i-1}_p(V)[r_i/\bar{0}]$ if $r_i = p(n_i)$, $\psi^i(V) = \psi^{i-1}_p(V)[r_i/\bar{1}]$ if $r_i = n_i$, and $\psi^i_p(V) = \psi^{i-1}_p(V)[r_i/\bar{2}]$ if $r_i=s(n_i)$. For $r_i = \bar{0}$ we get $\psi^i_p(V) = \psi^{i-1}_p(V)$. Here $r'_i$ and $s'_i$ are both constants and therefore, as in the case above, $r'_i = s'_i$ as, otherwise, $\sigma(r'_i)\Eval \neq \sigma(s'_i)\Eval$.
\item If $p(i) = C_{i,3}$ (i.e. $p(i) = n_i \neq \bar{0} \land p(n_i) \neq \bar{0}$) then we define $\psi^i_p(V) = \psi^{i-1}_p(V)$. We consider all cases where $r'_i \neq s'_i$:
\begin{enumerate}
\item $r'_i = \bar{0}, s'_i = n_i$: as $\sigma(n_i) \neq \bar{0}$ we get $\sigma(r'_i)\Eval \neq \sigma(s'_i)\Eval$.
\item  $r'_i = \bar{0}, s'_i = p(n_i)$:  as $\sigma(p(n_i)) \neq \bar{0}$ we get $\sigma(r'_i)\Eval \neq \sigma(s'_i)\Eval$.
\item $r'_i = \bar{0}, s'_i = s(n_i)$: $\sigma(r'_i)\Eval \neq \sigma(s'_i)\Eval$.
\item $r'_i = n_i, s'_i = s(n_i)$: for all $\sigma$  $\sigma(r'_i)\Eval \neq \sigma(s'_i)\Eval$.
\item $r'_i = n_i, s'_i = p(n_i)$: for $\sigma \in \Scal(p)$ we have $\sigma(p(n_i))\Eval \neq \bar{0}$ and thus 
$\sigma(p(n_i))\Eval \neq \sigma(n_i)$ which yields  $\sigma(r'_i)\Eval \neq \sigma(s'_i)\Eval$.
\item $r'_i = p(n_i), s'_i = s(n_i)$:  for all $\sigma$  $\sigma(r'_i)\Eval \neq \sigma(s'_i)\Eval$.
\end{enumerate}
\end{itemize}
As all remaining cases are symmetric (it does not matter whether we start with $r'_i$ or $s'_i$) we obtain that for all 
$\sigma \in \Scal(p)$ $\sigma(r'_i)\Eval \neq \sigma(s'_i)\Eval$ - which contradicts $(\star)$. 
We can conclude that $\{X(r'_1,\ldots,r'_\beta) ,X(s'_1,\ldots,s'_\beta)\}$ is parameter-unifiable iff 
$X(r'_1,\ldots,r'_\beta) =X(s'_1,\ldots,s'_\beta)$. Therefore 
\[
\begin{array}{l}
X(r'_1,\ldots,r'_\beta)\{ X(s'_1,\ldots,s'_\beta) \ass \psi(t)\} = \psi(t) \mbox{ iff  for all }\sigma \in \Scal(p)\\
\sigma(X(r'_1,\ldots,r'_\beta))\Eval\{ \sigma(X(s'_1,\ldots,s'_\beta))\Eval \ass \sigma(\psi(t))\Eval\} = \sigma(\psi(t)) \Eval
\end{array}
\]
which in the end yields $\sigma(\psi_p(t))\Eval \psi_p(\Theta)[\sigma] = \sigma(\psi_p(t)\psi_p(\Theta))\Eval$ for $\sigma \in \Scal(p)$.
\end{proof}
\begin{theorem}\label{the.soundness-appl}
Application is sound, i.e. for all $\sigma \in \Scal(p)$ we have $\sigma(\Ap(\Theta,t)|_p)\Eval = \sigma(t)\Eval\Theta[\sigma]$.
\end{theorem}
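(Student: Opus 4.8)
The plan is to establish the identity by a short chain of equalities, each justified by one of the three results proved immediately above the theorem; no new induction is required, since all the inductive work has already been carried out in Lemma~\ref{le.psi-p} and Proposition~\ref{psi-subst}.

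First I would unfold the left-hand side using the semantics of application (Definition~\ref{def.sem-appl-standard}), which gives $\sigma(\Ap(\Theta,t)|_p)\Eval = \sigma(\psi_p(t)\psi_p(\Theta))\Eval$. The key observation is that this now concerns the \emph{syntactic} application $\psi_p(t)\psi_p(\Theta)$, to which the soundness of syntactic application (Lemma~\ref{le.psi-p}) applies: for $\sigma \in \Scal(p)$ we have $\sigma(\psi_p(t)\psi_p(\Theta))\Eval = \sigma(\psi_p(t))\Eval\,\psi_p(\Theta)[\sigma]$. This is exactly the step that licenses replacing an evaluated schematic application by an ordinary first-order substitution, so it carries the essential content.

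Next I would eliminate the two remaining occurrences of $\psi_p$. The term $\psi_p(t)$ is handled by the term-level invariance of $\psi_p$ (Lemma~\ref{le.psi-p}), giving $\sigma(\psi_p(t))\Eval = \sigma(t)\Eval$ for $\sigma \in \Scal(p)$; and the substitution $\psi_p(\Theta)$ is handled by Proposition~\ref{psi-subst}, which yields $\psi_p(\Theta)[\sigma] = \Theta[\sigma]$ on $\Scal(p)$. Substituting both into the previous equation gives $\sigma(\Ap(\Theta,t)|_p)\Eval = \sigma(t)\Eval\,\Theta[\sigma]$, as desired.

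The one point that requires care, rather than a genuine obstacle, is the compatibility of the side conditions: each of the three cited results is stated only for parameter assignments in $\Scal(p)$, i.e. those with $\sigma(p)\Eval = \top$. Since this is exactly the hypothesis of the theorem, all three invocations are legitimate for the same $\sigma$, and the chain composes without gaps. I would also note that the equality is asserted state-wise, for a fixed $p$ and $\sigma \in \Scal(p)$; because the set of states forms a partition of $\Scal$, this per-state identity is precisely what makes $\Ap(\Theta,t)[\sigma]$ well defined on all of $\Scal$, but that global remark lies outside the formal statement and need not be reproved here.
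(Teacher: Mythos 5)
Your proposal is correct and follows essentially the same route as the paper's own proof: unfold $\Ap(\Theta,t)|_p$ to $\psi_p(t)\psi_p(\Theta)$, apply the soundness of syntactic application (Lemma~\ref{le.psi-p}) to split off $\psi_p(\Theta)[\sigma]$, and then remove the remaining $\psi_p$'s via the invariance of $\psi_p$ on terms and Proposition~\ref{psi-subst}. The only difference is cosmetic -- you cite Definition~\ref{def.sem-appl-standard} where the paper cites Definition~\ref{def.appl-standard} for the initial unfolding -- and your explicit remark about the shared side condition $\sigma \in \Scal(p)$ is a welcome clarification rather than a deviation.
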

\begin{proof}
Let $\Ap(\Theta,t)$ be defined as in Definition~\ref{def.appl-standard}. Then 
\begin{eqnarray*}
\Ap(\Theta,t)|_p &=& \psi_p(t)\psi_p(\Theta) \mbox{ and therefore }\\
\sigma(\Ap(\Theta,t)|_p)\Eval &=& \sigma(\psi_p(t)\psi_p(\Theta))\Eval \mbox{ and by Lemma}~\ref{le.psi-p2}\\
 \sigma(\psi_p(t))\Eval \psi_p(\Theta)[\sigma] &=& \sigma(\psi_p(t)\psi_p(\Theta))\Eval.
\end{eqnarray*}
As $\sigma(\psi_p(t))\Eval = \sigma(t)\Eval$ and $\Theta[\sigma] = \psi_p(\Theta)[\sigma]$ for $\sigma \in \Scal(p)$ the theorem follows.
\end{proof}
We illustrate now the application of a substitution $\Theta$ in case of variable expressions $X(r_1,r_2)$ for $X \in \Viota_2$.
\begin{examp}\label{ex.appl-Viota-2}
Given a standard variable expression $X(r_1,r_2)$ and $\Theta = \Theta_1 \union \cdots \Theta_i$ we compute the term $X(r_1,r_2)\Theta$. We assume that the  $\Theta_i$ are the substitutions for the variable class $X_i$. Therefore it is sufficient to determine the result of $X(s,t)\Theta_i$ for just one $i \in \{1,\ldots,\alpha\}$: if $X \neq X_i$ then we have $X(s,t)\Theta_i = \{\top\colon X(s,t)\}$.

To this aim we have to distinguish all different forms of $X(r_1,r_2)$ and all forms of replacements $X(r'_1,r'_2) \ass t'$ in $\Theta_i$ and all states $p$ of $X(r_1,r_2)$ where $X$ is some of the $X_i$. According to Definition~\ref{def.state} we obtain $9$ states i.e. $\Dcal = \{p_1,\ldots,p_9\}$ (for comfort we use the names $n,m$ for $n_1,n_2$). 
\begin{eqnarray*}
p_1 &=& n = \bar{0} \land m = \bar{0},\\
p_2 &=& n = \bar{0} \land m \neq \bar{0} \land p(m) \neq \bar{0},\\
p_3 &=& n = \bar{0} \land m \neq \bar{0} \land p(m) = \bar{0},\\
p_4 &=& n \neq \bar{0} \land m = \bar{0} \land p(n) \neq \bar{0},\\
p_5 &=&  n \neq \bar{0} \land m = \bar{0} \land p(n) = \bar{0},\\
p_6 &=&  n \neq \bar{0} \land m \neq \bar{0} \land p(n) = \bar{0} \land p(m) = \bar{0},\\
p_7 &=& n \neq \bar{0} \land m \neq \bar{0} \land p(n) = \bar{0} \land p(m) \neq \bar{0},\\
p_8 &=& n \neq \bar{0} \land m \neq \bar{0} \land p(n) \neq \bar{0} \land p(m) = \bar{0},\\
p_9 &=& n \neq \bar{0} \land m \neq \bar{0} \land p(n) \neq \bar{0} \land p(m) \neq \bar{0}.
\end{eqnarray*}
Let us assume that $X(r_1,r_2) = X(n,m)$ and $\Theta_i = \{X(\bar{0},m) \ass g(Y(n)), X(s(n),m) \ass Y(n)\}$.  
According to Definition~\ref{def.appl-standard} we have to determine the set 
$$S =\{p_1\colon \psi_1(X(n,m))\psi_1(\Theta_i), \ldots, p_9\colon \psi_9(X(n,m))\psi_9(\Theta_i)\}$$
where we write $\psi_i$ for $\psi_{p_i}$. 
\[
\begin{array}{l}
\psi_1(X(n,m)) = X(\bar{0},\bar{0}),\ \psi_1(\Theta_i)=  \{X(\bar{0},\bar{0}) \ass g(Y(\bar{0})), 
X(s(\bar{0}),\bar{0}) \ass Y(\bar{0})\}\\
\mbox{therefore } p_1\colon X(\bar{0},\bar{0})\{X(\bar{0},\bar{0}) \ass g(Y(\bar{0})), 
X(s(\bar{0}),\bar{0}) \ass Y(\bar{0})\} \in S.\\
\mbox{resulting in }p_1\colon g(Y(\bar{0})) \in S.\\[1ex]
\psi_2(X(n,m)) = X(\bar{0},m),\ \psi_2(\Theta_i)=  \{X(\bar{0},m) \ass g(Y(\bar{0})), 
X(s(\bar{0}),m) \ass Y(\bar{0})\}\\
\mbox{therefore }p_2\colon  g(Y(\bar{0})) \in S.\\[1ex]
\psi_3(X(n,m)) = X(\bar{0},\bar{1}),\ \psi_3(\Theta_i)=  \{X(\bar{0},\bar{1}) \ass g(Y(\bar{0})), 
X(\bar{1},\bar{1}) \ass Y(\bar{0})\}\\
\mbox{therefore }p_3\colon  g(Y(\bar{0})) \in S.\\[1ex]
\psi_4(X(n,m)) = X(n,\bar{0}),\ \psi_4(\Theta_i)=  \{X(\bar{0},\bar{0}) \ass g(Y(n)), 
X(s(n),\bar{0}) \ass Y(n)\}\\
\mbox{therefore }p_4\colon  X(n,\bar{0}) \in S.\\[1ex]
\psi_5(X(n,m)) = X(\bar{1},\bar{0}),\ \psi_5(\Theta_i)=  \{X(\bar{0},\bar{0}) \ass g(Y(\bar{1})), 
X(\bar{2},\bar{0}) \ass Y(\bar{1})\}\\
\mbox{therefore }p_5\colon  X(\bar{1},\bar{0}) \in S.\\[1ex]
\psi_6(X(n,m)) = X(\bar{1},\bar{1}),\ \psi_6(\Theta_i)=  \{X(\bar{0},\bar{1}) \ass g(Y(\bar{1})), 
X(\bar{2},\bar{1}) \ass Y(\bar{1})\}\\
\mbox{therefore }p_6\colon  X(\bar{1},\bar{1}) \in S.\\[1ex]
\psi_7(X(n,m)) = X(\bar{1},m),\ \psi_7(\Theta_i)=  \{X(\bar{0},m) \ass g(Y(\bar{1})), 
X(\bar{2},m) \ass Y(\bar{1})\}\\
\mbox{therefore }p_7\colon  X(\bar{1},m) \in S.\\[1ex]
\psi_8(X(n,m)) = X(n,\bar{1}),\ \psi_8(\Theta_i)=  \{X(\bar{0},\bar{1}) \ass g(Y(\bar{n})), 
X(s(n),\bar{1}) \ass Y(\bar{1})\}\\
\mbox{therefore }p_8\colon  X(n,\bar{1}) \in S.\\[1ex]
\psi_9(X(n,m) = X(n,m),\ \psi_9(\Theta_i) = \Theta_i \mbox{ and therefore } p_9\colon X(n,m) \in S.
\end{array}
\]
Now we can determine $\Ap(\Theta_i,X(n,m))$; we obtain 
\begin{eqnarray*}
\Ap(\Theta_i,X(n,m)) &=& \{\{p_1,p_2,p_3\}\colon  g(Y(\bar{0})),\ p_4\colon  X(n,\bar{0}),\  p_5\colon  X(\bar{1},\bar{0}) , \\
&& p_6\colon  X(\bar{1},\bar{1}),\ p_7\colon  X(\bar{1},m), \  p_8\colon  X(n,\bar{1}),\ p_9\colon X(n,m)\}.
\end{eqnarray*}
It is easy to see that, e.g., $\Ap(\Theta_i,X(s(n),s(m))) = \{\Dcal: X(s(n),s(m))\}$. 
\end{examp}
\begin{examp}\label{ex.standard-appl-to-term}
Let $\Theta = \Theta_i$ be the standard $s$-substitution defined in Example~\ref{ex.appl-Viota-2}. We consider the term 
$$t = r(X(n,m),X(s(n),\bar{0}),Y(n)).$$ 
By Definition~\ref{def.ssub-Tiota} we have, for every state $p$, 
$$\Ap(\Theta,t)_p = r(\psi_p(X(n,m))\chi_p(\Theta),\psi_p(X(s(n),\bar{0})\chi_p(\Theta),\psi_p(Y(n))\chi_p(\Theta).$$
Again we have only two parameters so $\Dcal(X(n,m)) = \{p_1,\ldots,p_9\}$ as in Example~\ref{ex.appl-Viota-2}. Therefore we obtain 
$$\Ap(\Theta,t) = \{p\colon \Ap(\Theta,t)|_p \mid p \in \{p_1,\ldots,p_9\}\}$$
Below we compute the subset $S\colon \{\Ap(\Theta,t)|_{p_1}, \Ap(\Theta,t)|_{p_2}, \Ap(\Theta,t)|_{p_9}\}$ of 
$\Ap(\Theta,t)$.
Recall that 
$$\Theta = \{X(\bar{0},m) \ass g(Y(n)), X(s(n),m) \ass Y(n)\} \mbox{ and }$$
\begin{eqnarray*}
p_1 &=& n = \bar{0} \land m = \bar{0},\\
p_2 &=& n = \bar{0} \land m \neq \bar{0} \land p(m) \neq \bar{0},\\
p_9 &=& n \neq \bar{0} \land m \neq \bar{0} \land p(n) \neq \bar{0} \land p(m) \neq \bar{0}.
\end{eqnarray*}
\begin{eqnarray*}
\Ap(\Theta,t)|_{p_1} &=& r(g(Y(\bar{0})),Y(\bar{0}),Y(n))),\\
\Ap(\Theta,t)|_{p_2} &=& r(g(Y(\bar{0})),Y(\bar{0}),Y(n))),\\ 
\Ap(\Theta,t)|_{p_9} &=& r(X(n,m),X(s(n),\bar{0}),Y(n)).
\end{eqnarray*}
Therefore $S = \{\{p_1,p_2\}\colon r(g(Y(\bar{0}))Y(\bar{0}),Y(n))),\ p_9\colon r(X(n,m),X(s(n),\bar{0}),Y(n))\}$.
\end{examp}
Though the computation of application  is still complicated - also for standard terms and standard $s$-substitutions - the complexity is fixed when the set of parameters if fixed. The number of different states for a standard substitution over two parameters is always 9  - thus a fixed number. Fixed is also the number of possible arguments for a variable $X \in \Viota_2$ and $X(n,m)$, namely $(r,s)\colon r \in \{\bar{0},n,s(n),p(n)\}, s \in  \{\bar{0},m,s(m),p(m)\}$ - i.e. 16. 
In fact the substitutions can be computed via predefined tables! Fixed numbers (though larger ones) we get also for more than 2 parameters. Thus the complexity of the representation only depends on the number of parameters and not on the complexity of terms. This holds also for the concatenation of substitutions (under a given state concatenation of substitutions are uniquely defined).
We are now able to define the composition of standard $s$-substitutions:
\begin{definition}\label{def.comp-standard-ssub}
Let $\Theta_1$ and $\Theta_2 $ be standard $s$-substitutions such  that the set of parameters in the domains and range of $\Theta_1,\Theta_2$ is contained in $\{n_1,\ldots,n_\gamma\}$. Then for any $p \in \Dcal(X(n_1,\ldots,n_\gamma))$ we define 
\begin{eqnarray*}
(\Theta_1 \circ \Theta_2)|_p &=& \psi_p(\Theta_1)\psi_p(\Theta_2).
\end{eqnarray*}
Finally, $\Theta_1 \circ \Theta_2 = \{p\colon  (\Theta_1 \circ \Theta_2)|_p \mid p \in \Dcal(X(n_1,\ldots,n_\gamma))\}$.
\end{definition}
\begin{examp}\label{ex.comp-s-subst}
Let 
\begin{eqnarray*}
\Theta_1 &=& \{X(\bar{0},m) \ass g(Y(n)), X(s(n),m) \ass Y(n)\},\\
\Theta_2 &=& \{Y(n) \ass \fhat(Z(n),n), Y_1(\bar{0},\bar{0},k) \ass a\},\\
\fhat(x,\bar{0}) &=& x,\\
\fhat(x,s(n)) &=& f(\fhat(x,n)).
\end{eqnarray*}
As we have $3$ parameters in $\Theta_1,\Theta_2$ we have $27$ different states. We select $3$ states $p_1,p_2,p_3$ of them and compute $(\Theta_1 \circ \Theta_2)_p$. Let 
\begin{eqnarray*}
p_1 &=& n = \bar{0} \land m = \bar{0} \land k = \bar{0},\\
p_2 &=& n \neq \bar{0} \land p(n) = \bar{0} \land m = \bar{0} \land k \neq \bar{0} \land p(k) \neq \bar{0},\\
p_3 &=& n \neq \bar{0} \land p(n) \neq \bar{0} \land m \neq \bar{0} \land p(m) \neq \bar{0} \land  k \neq \bar{0} \land p(k) \neq \bar{0}.
\end{eqnarray*}
Then 
\[
\begin{array}{l}
(\Theta_1 \circ \Theta_2)|_{p_1} = \psi_{p_1}(\Theta_1)\psi_{p_1}(\Theta_2) =\\
\{X(\bar{0},\bar{0}) \ass g(Y(\bar{0})), X(\bar{1},\bar{0}) \ass Y(\bar{0})\}\{Y(\bar{0}) \ass \fhat(Z(\bar{0}),\bar{0}), Y_1(\bar{0},\bar{0},\bar{0}) \ass a\} =\\
\{X(\bar{0},\bar{0}) \ass g(\fhat(Z(\bar{0}),\bar{0})),\ X(\bar{1},\bar{0}) \ass \fhat(Z(\bar{0}),\bar{0}),\\
Y(\bar{0}) \ass \fhat(Z(\bar{0}),\bar{0}), Y_1(\bar{0},\bar{0},\bar{0}) \ass a\} = \\
\{X(\bar{0},\bar{0}) \ass g(Z(\bar{0}),\bar{0})),\ X(\bar{1},\bar{0}) \ass Z(\bar{0}), Y(\bar{0}) \ass Z(\bar{0}),\ Y_1(\bar{0},\bar{0},\bar{0}) \ass a\}.\\[1ex]
(\Theta_1 \circ \Theta_2)|_{p_2} = \psi_{p_2}(\Theta_1)\psi_{p_2}(\Theta_2) =\\
\{X(\bar{0},\bar{0}) \ass g(Y(\bar{1})),\ X(\bar{2},\bar{0}) \ass Y(\bar{1})\}\{Y(\bar{1}) \ass \fhat(Z(\bar{1}),\bar{1}), Y_1(\bar{0},\bar{0},k) \ass a\} =\\
\{X(\bar{0},\bar{0}) \ass g(\fhat(Z(\bar{1}),\bar{1})),\ X(\bar{2},\bar{0}) \ass \fhat(Z(\bar{1}),\bar{1}),\\ 
Y(\bar{1}) \ass \fhat(Z(\bar{1}),\bar{1}), Y_1(\bar{0},\bar{0},k) \ass a\} = \\
\{X(\bar{0},\bar{0}) \ass g(f(Z(\bar{1}))),\ X(\bar{2},\bar{0}) \ass f(Z(\bar{1})), Y(\bar{1}) \ass f(Z(\bar{1})), Y_1(\bar{0},\bar{0},k) \ass a\}.\\[1ex]
(\Theta_1 \circ \Theta_2)|_{p_3} = \psi_{p_3}(\Theta_1)\psi_{p_3}(\Theta_2) =\\
\{X(\bar{0},m) \ass g(Y(n)), X(s(n),m) \ass Y(n)\}\{Y(n) \ass \fhat(Z(n),n), Y_1(\bar{0},\bar{0},k) \ass a\} =\\
\{X(\bar{0},m) \ass g(\fhat(Z(n),n)),\ X(s(n),m) \ass \fhat(Z(n),n),\ Y(n) \ass \fhat(Z(n),n),\ Y_1(\bar{0},\bar{0},k) \ass a\}.
\end{array}
\]
\end{examp}
\begin{theorem}\label{the.comp-s-sub-sound}
The composition of standard $s$-substitutions is sound: let $\Dcal$ be the set of parameters in $\Theta_1$ and $\Theta_2$. Then, for all $p \in \Dcal$ and $\sigma \in \Scal(p)$ we have $\sigma((\Theta_1 \circ \Theta_2)|_p)\Eval = \Theta_1[\sigma]\Theta_2[\sigma]$.
\end{theorem}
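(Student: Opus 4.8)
The plan is to peel off the schematic layer using the results already established for $\psi_p$, and then reduce the statement to the ordinary composition of first-order substitutions, which I would verify pointwise. By Definition~\ref{def.comp-standard-ssub} we have $(\Theta_1 \circ \Theta_2)|_p = \psi_p(\Theta_1)\psi_p(\Theta_2)$, and by Proposition~\ref{psi-subst} both $\psi_p(\Theta_1)$ and $\psi_p(\Theta_2)$ are extended $s$-substitutions satisfying $\psi_p(\Theta_i)[\sigma] = \Theta_i[\sigma]$ for every $\sigma \in \Scal(p)$. Hence it suffices to prove the ``evaluation commutes with composition'' identity $\sigma(\psi_p(\Theta_1)\psi_p(\Theta_2))\Eval = \psi_p(\Theta_1)[\sigma]\,\psi_p(\Theta_2)[\sigma]$; the theorem then follows by rewriting the right-hand side with Proposition~\ref{psi-subst} twice.

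To prove this identity I would argue pointwise: both sides are first-order substitutions, so they agree iff they act identically on every first-order variable $y$. Writing $\psi_p(\Theta_1) = \{A_1 \ass u_1, \ldots, A_m \ass u_m\}$, so that each $u_i = \psi_p(t_i)$, every relevant $y$ has the form $\sigma(V)\Eval$ for a variable expression $V$, and I would distinguish three cases: (i) $V = A_i$ for some $i$; (ii) $V$ lies in the domain of $\psi_p(\Theta_2)$ but not of $\psi_p(\Theta_1)$; (iii) $V$ lies in neither domain. In case (i) the composed substitution binds $A_i$ to $u_i\psi_p(\Theta_2)$, and the decisive step is to invoke Lemma~\ref{le.psi-p} with $t := t_i$ and $\Theta := \Theta_2$, giving $\sigma(u_i\psi_p(\Theta_2))\Eval = \sigma(u_i)\Eval\,\psi_p(\Theta_2)[\sigma]$; the right-hand side is exactly $(y\,\psi_p(\Theta_1)[\sigma])\,\psi_p(\Theta_2)[\sigma]$, i.e.\ the value of $y$ under the first-order composition. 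Cases (ii) and (iii) are immediate, since there $y\,\psi_p(\Theta_1)[\sigma] = y$ and the binding, if any, is contributed directly by $\psi_p(\Theta_2)$.

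The step requiring care is the bookkeeping of domains and the well-definedness of the pointwise value: one must check that $y = \sigma(A_i)\Eval$ determines its binding unambiguously, i.e.\ that no two distinct domain entries of $\psi_p(\Theta_1)$ collapse to the same first-order variable under $\sigma$. This is precisely guaranteed by Proposition~\ref{psi-subst} together with Definition~\ref{def.extssub}: since $\psi_p(\Theta_1)$ is an extended $s$-substitution, the images $\sigma(A_i)\Eval$ are pairwise distinct for $\sigma \in \Scal(p)$, and the analogous property for $\psi_p(\Theta_2)$ rules out conflicts when its bindings are merged in cases (i) and (ii). Thus the three cases are mutually exclusive and exhaustive, the pointwise values match on every first-order variable, and the two first-order substitutions coincide. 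I expect this domain and collision analysis—rather than the algebraic manipulation, which is already supplied by Lemma~\ref{le.psi-p}—to be the main obstacle, reflecting the standardness (non-parameter-unifiability) hypothesis that underlies the whole construction.
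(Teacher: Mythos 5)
Your proposal is correct and takes essentially the same route as the paper, whose own proof simply declares the statement ``immediate'' from Definition~\ref{def.comp-standard-ssub}, Definition~\ref{def.psi-to-subst} and Theorem~\ref{the.soundness-appl}. You merely work one level lower in the dependency chain, invoking Lemma~\ref{le.psi-p} and Proposition~\ref{psi-subst} (the ingredients of that theorem) directly and supplying the pointwise domain/collision bookkeeping that the paper leaves implicit; there is no gap.
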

\begin{proof}
Immediate by Definition~\ref{def.comp-standard-ssub}, by Definition~\ref{def.psi-to-subst} and by Theorem~\ref{the.soundness-appl}.
\end{proof} 
It remains to extend substitution and unification to schematic formulas. We restrict our analysis to simple $\omega \iota o$-theories. Let $\Theta$ be a standard $s$-substitution and $\Dcal$ a set of states containing all parameters of the formula. We first define the application of $\psi_p(\Theta)$ to $\Fob$, the basic formulas  (see Definition~\ref{def.Fbasic}). 
\begin{definition}\label{def.appl-subs-Fob}
We assume that all formulas have parameters in $\Dcal$ and $p \in \Dcal$.
Let $\Theta$ be a standard $s$-substitution and $T^*$ be an $\omega\iota$-theory. Then
\begin{itemize}
\item If $\xi \in \FV$ then $\Ap(\Theta,\xi)|_p = \xi\psi_p(\Theta) = \xi$.
\item If $P \in \Po_k$ and $t_1,\ldots,t_k \in \Fob(T^*)$ then 
$\Ap(\Theta,P(t_1,\ldots,t_k))|_p = P(\Ap(\Theta,t_1)|_p,\ldots,$ $\Ap(\Theta,t_k)|_p)$
\item If $F \in \Fob(T^*)$ then $\Ap(\Theta,\neg F)|_p = \neg \Ap(\Theta,F)|_p$.
\item If $F_1,F_2 \in \Fob(T^*)$ then $\Ap(\Theta,F_1 \circ F_2)|_p =\Ap( \Theta,F_1)|_p \circ \Ap(\Theta,F_2)|_p$ for 
$\circ \in \{\land,\lor\}$. 
\end{itemize}
\end{definition}
The extension of an $s$-substitutions to schematic formulas  is not completely trivial. It is not immediately clear how an $s$-substitution should be applied to an expression of the form $\phat(X_1,\ldots,X_j,t_1,\ldots,t_k)$ where $\phat$ is a schematic predicate symbol and $X_1,\ldots,X_j$ are variable  classes. The solution consists in introducing a new symbol $\phat(\Theta)$ which depends on $\phat$ and $\Theta$. 
\begin{definition}\label{def.appl-subst-schem-formula}
We assume that all formulas have parameters in $\Dcal$ and $p \in \Dcal$.
Let $\Theta$ be a standard $s$-substitution. Then we define 
\begin{itemize}
\item If $\xi \in \FV$ then $\Ap(\Theta,\xi)|_p =\xi\psi_p(\Theta) = \xi$.
\item If $P \in \Po_k$ and $t_1,\ldots,t_k \in \Tiotahat(T^*)$ then $\Ap(\Theta,P(t_1,\ldots,t_k))|_p =  
P(\Ap(\Theta,t_1)|_p,\ldots,$ $\Ap(\Theta,t_k)|_p)$.
\item $\Ap(\Theta,\neg F)|_p = \neg \Ap(\Theta,F)|_p$.
\item $\Ap(\Theta,F_1 \circ F_2)|_p =\Ap( \Theta,F_1)|_p \circ \Ap(\Theta,F_2)|_p$ for 
$\circ \in \{\land,\lor\}$. 
\item if $\pi = (i,j,k)$ is a profile and $\phat \in \Phat(\pi)$, $X_l \in \Viota_i$ for $l=1,\ldots,j$, $t_1,\ldots,t_k \in \Tomega$ then 
$$\Ap(\Theta,\phat(X_1,\ldots,X_j,t_1,\ldots,t_k)|_p =  \phat(\Theta)(X_1,\ldots,X_j,t_1,\ldots,t_k)|_p.$$
where $\phat(\Theta)$ is defined as follows: 
Let us assume that 
$D(\phat)$ is of the form 
\[
\begin{array}{l}
\{\phat(X_1,\ldots X_j,n_1,\ldots,n_{k-1},s(n_k)) = \phat_S\{\xi \ass \phat(X_1,\ldots,X_j,n_1,\ldots,n_{k-1},n_k)\},\\ 
\phat(X_1,\ldots X_j,n_1,\ldots,n_{k-1},\bar{0}) = \qhat_B)\}.
\end{array}
\]
Then we define $D(\phat(\Theta))|_p)(X_1,\ldots,X_j,n_1,\ldots,n_k)|_p$ as 
\begin{small}
\[
\begin{array}{l}
\{\phat(\Theta)(X_1,\ldots,X_j,n_1,\ldots,s(n_k))|_p = \Ap(\Theta,\phat_S)|_p\{\xi \ass \phat(\Theta)(X_1,\ldots,X_j,n_1,\ldots,n_{k-1},n_k)|_p\},\\
\phat(\Theta)(X_1,\ldots,X_j,n_1,\ldots,\bar{0})|_p = \Ap(\Theta,\qhat_B)|_p \}.
\end{array}
\]
\end{small}
\end{itemize}
\end{definition}
\begin{examp}\label{ex.s-subst-schemform}
Let $(T,T',\emptyset)$ the $\omega\iota o$-theory defined in Example~\ref{ex.ssub-form-schema2}. We had the following definitions of recursive predicate- and term symbols:
\begin{eqnarray*}
D(\qhat) &=& \{ \qhat(X,s(n)) = \qhat(X,n) \lor Q(\that(X(s(n)),s(n))),\ \qhat(X,\bar{0}) = Q(X(\bar{0}))\}.\\
D(\that) &=& \{\that(x,s(n)) = h(\that(x,n)),\ \that(x,\bar{0}) = x \}.
\end{eqnarray*}
and let $\Theta = \{X(s(n)) \ass \that(X(n),n)\}$, $p = n \neq \bar{0} \land p(n) \neq \bar{0}$, and $F = \qhat(X,s(n))$.
Then 
\[
\begin{array}{l}
\Ap(\Theta,F)|_p = \Ap(\Theta,\qhat(X,s(n)))|p = \qhat(\Theta)(X,s(n))|_p, \mbox{ where }\\[1ex]
\qhat(\Theta)(X,s(n))|_p = \Ap(\Theta,\qhat(X,n) \lor Q(\that(X(s(n)),s(n))))|p = \\[1ex]
\Ap(\Theta,\qhat(X,n))|_p \lor \Ap(\Theta,Q(\that(X(s(n)),s(n))))|p =\\[1ex]
 \Ap(\Theta,\qhat(X,n))|_p \lor Q(\that(X(s(n)),s(n)))\psi_p(\Theta) =\\[1ex]

\qhat(\Theta)(X,n)|_p \lor Q(\that(X(s(n)),s(n)))\Theta = \qhat(\Theta)(X,n)|p \lor Q(\that(\that(X(n),n),s(n))).
\end{array}
\]
Note that $\qhat(\Theta)(X,n)|_p$ cannot be evaluated further unless we have a parameter assignment.
\end{examp}
\begin{theorem}\label{the.s-subst-sound-on-Tiotahat}
Schematic substitution is sound on schematic formulas: let $F$ be a formula in an $\omega \iota o$-theory, $\Theta$ be a standard 
$s$-substitution and $\sigma$ a parameter assignment. Then \\
$\sigma(F)\Eval\Theta[\sigma] = \sigma(\Ap(\Theta,F)|_p)\Eval$  for $p \in \Dcal$ and $\sigma(p)\Eval = \top$.
\end{theorem}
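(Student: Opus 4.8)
The plan is to prove the identity by a well-founded induction on the schematic formula $F$, following the definitional order $<_p$ of the underlying $\omega\iota o$-theory (whose base consists of the formulas in $\Fob$, i.e. those free of schematic predicate symbols); for the case of a schematic predicate atom an auxiliary induction on the evaluated recursion parameter is added. Throughout I fix $\sigma$ and a state $p$ with $\sigma(p)\Eval = \top$, so that Proposition~\ref{psi-subst} supplies $\Theta[\sigma] = \psi_p(\Theta)[\sigma]$ and the term-level soundness Theorem~\ref{the.soundness-appl} is available.

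First I would dispatch the cases that do not involve schematic predicate symbols. If $F = \xi \in \FV$, then both $\sigma(F)\Eval\Theta[\sigma]$ and $\sigma(\Ap(\Theta,\xi)|_p)\Eval$ reduce to $\xi$, since neither evaluation nor a standard $s$-substitution acts on a formula variable. If $F = P(t_1,\ldots,t_k)$ is atomic, then, as evaluation, application of $\Theta[\sigma]$ and $\Ap(\Theta,-)|_p$ all commute with the predicate symbol $P$, the claim reduces to $\sigma(t_i)\Eval\Theta[\sigma] = \sigma(\Ap(\Theta,t_i)|_p)\Eval$ for each argument, which is exactly Theorem~\ref{the.soundness-appl}. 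The cases $F = \neg F'$ and $F = F_1 \circ F_2$ for $\circ \in \{\land,\lor\}$ follow immediately from the induction hypotheses, because $\neg,\land,\lor$ commute with evaluation, with $\Theta[\sigma]$, and, by Definition~\ref{def.appl-subst-schem-formula}, with $\Ap(\Theta,-)|_p$.

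The remaining, and central, case is $F = \phat(X_1,\ldots,X_j,t_1,\ldots,t_k)$. Here $\sigma(F)\Eval$ is obtained by unfolding the defining equations $D(\phat)$ according to the numeral $\sigma(t_k)\Eval$, while $\sigma(\Ap(\Theta,F)|_p)\Eval = \sigma(\phat(\Theta)(X_1,\ldots,X_j,t_1,\ldots,t_k)|_p)\Eval$ is obtained by unfolding $D(\phat(\Theta))|_p$ in lock-step. I would therefore run an inner induction on $m = \sigma(t_k)\Eval$. If $m = \bar 0$, both sides reduce via the base equations to $\sigma(\phat_B)\Eval\Theta[\sigma]$ and $\sigma(\Ap(\Theta,\phat_B)|_p)\Eval$; since $\Phat(\phat_B)$ consists only of predicate symbols $<_p \phat$, the principal induction hypothesis applied to $\phat_B$ closes this case. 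If $m = s(m')$, the step equations rewrite the two sides to $\sigma(\phat_S\{\xi \ass \phat(\ldots,t_k')\})\Eval\Theta[\sigma]$ and $\sigma(\Ap(\Theta,\phat_S)|_p\{\xi \ass \phat(\Theta)(\ldots,t_k')|_p\})\Eval$, where $t_k'$ is the predecessor argument with $\sigma(t_k')\Eval = m'$. Because $\xi$ is a formula variable, evaluation and the first-order substitution $\Theta[\sigma]$ both commute with the substitution $\{\xi \ass \cdot\}$; this lets me split each side into a $\phat_S$-part and a $\xi$-part. The $\phat_S$-part is handled by the principal induction hypothesis on $\phat_S$ (whose predicate symbols are $<_p \phat$), and the $\xi$-part, namely $\phat(\ldots,t_k')$, by the inner induction hypothesis at the smaller value $m'$. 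Recombining yields the claimed equality.

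The main obstacle is exactly this schematic-predicate case. Two points need care: first, that the recursive definition of the new symbol $\phat(\Theta)$ in Definition~\ref{def.appl-subst-schem-formula} unfolds under $\sigma$ in the same number of steps as $\phat$ itself, which holds because both recursions are driven by the same last parameter $t_k$ and bottom out at $\bar 0$, the well-foundedness being guaranteed by the ordering conditions of the $\omega\iota o$-theory; and second, the commutation of the formula-variable substitution $\{\xi \ass \cdot\}$ with $\sigma$-evaluation and with $\Theta[\sigma]$, which I would isolate as a small auxiliary lemma asserting that for schematic formulas $H,G$ and any $\sigma$ one has $\sigma(H\{\xi \ass G\})\Eval = \sigma(H)\Eval\{\xi \ass \sigma(G)\Eval\}$, and that at the first-order level $(\cdot)\{\xi \ass \cdot\}$ commutes with the application of $\Theta[\sigma]$. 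Both facts hold because a standard $s$-substitution never touches formula variables and evaluation is a homomorphism; once they are available, the nested induction goes through.
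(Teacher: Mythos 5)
Your proposal is correct and follows essentially the same route as the paper's proof: the propositional and atomic cases are reduced homomorphically to the term-level soundness of Theorem~\ref{the.soundness-appl}, and the schematic-predicate case is handled by an outer induction on the definitional order of the predicate symbols combined with an inner induction on the evaluated recursion parameter, using the commutation of the $\xi$-substitution with evaluation and with $\Theta[\sigma]$. The only differences are presentational: you fold the minimal and non-minimal $\phat$ cases into one well-founded induction and isolate the $\xi$-commutation as an explicit lemma, both of which the paper treats more informally.
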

\begin{proof}
By Theorem~\ref{the.soundness-appl} we know that, for schematic terms $t$ in $T^*$, 
$\sigma(t)\Eval\Theta[\sigma] = \sigma(\Ap(\Theta,t)|_p)\Eval$ for $\sigma(p)\Eval = \top$. If $F$ is a formula in $T^*$ without schematic predicate symbols then,  
by Definition~\ref{def.appl-subst-schem-formula}, $\sigma(F)\Eval\Theta[\sigma] = \sigma(\Ap(\Theta,F)|_p)\Eval$ for $\sigma(p)\Eval = \top$ (here we only have homorphic extensions of the application of $\Theta$). What remains to be shown is 
$$\sigma(F)\Eval \Theta[\sigma] = \sigma(\Ap(\Theta,F)|p)\Eval \mbox{ for } \sigma(p)\Eval = \top$$
and for $F=\phat(X_1,\ldots,X_j,t_1,\ldots,t_k)$ with the corresponding definitions 
\begin{eqnarray*}
\{\phat(X_1,\ldots X_j,n_1,\ldots,n_{k-1},s(n_k)) &=& \phat_S\{\xi \ass \phat(X_1,\ldots,X_j,n_1,\ldots,n_{k-1},n_k)\},\\ 
\phat(X_1,\ldots X_j,n_1,\ldots,n_{k-1},\bar{0}) &=& \qhat_B\}.
\end{eqnarray*}
Moreover, we assume $\sigma(t_i)\Eval = \bar{\alpha}_i$ for $i=1,\ldots,k$. 

Here we proceed by the depth of recursion. Let us assume that $\phat$ is minimal and $\sigma(n_k) = \bar{0}$; then in the base case 
\[
\begin{array}{l}
\sigma(\phat(X_1,\ldots X_j,\bar{\alpha}_1,\ldots,\bar{\alpha}_{k-1},\bar{0})\Eval = \phat(X_1,\ldots,X_j,\bar{\alpha_1},\ldots,\bar{\alpha}_{k-1},\bar{0})\Eval = \sigma(\phat_B)\Eval .
\end{array}
\]
$\phat_B$ is a formula without schematic predicate symbols. In this case we have already shown that 
$$\sigma(\phat_B)\Eval\Theta[\sigma] = \sigma(\Ap(\Theta,\phat_B)|_p)\Eval \mbox{ for } \sigma(p)\Eval = \top.$$
So we obtain 
\[
\begin{array}{l}
\sigma(\phat(X_1,\ldots,X_j,n_1,\ldots,n_{k-1},\bar{0}))\Eval\Theta[\sigma] =
 \sigma(\Ap(\Theta,\phat(X_1,\ldots,X_j,n_1,\ldots,\bar{\alpha}_{k-1},\bar{0}))|p)\Eval 
\end{array}
\]
for $\sigma(p)\Eval = \top$.
Note that also $\phat(\Theta)(X_1,\ldots,X_j,n_1,\ldots,\bar{0})|_p = \Ap(\Theta,\qhat_B)|_p$.

For the step case of $\phat$ ($\bar{\alpha}_k > \bar{0}$) we have that $\phat_S$ does not contain schematic predicate symbols and we may assume that for $\sigma(n_k) = \bar{\alpha}_k$ we have 
$$\sigma(\phat(X_1,\ldots,X_j,n_1,\ldots,n_{k-1},n_k))\Eval\Theta[\sigma] = \sigma(\Ap(\Theta,\phat(X_1,\ldots,X_j,n_1,\ldots,n_{k-1},n_k)|_p)\Eval$$
for $\sigma(p)=\top$.
By 
$$\phat(X_1,\ldots X_j,n_1,\ldots,n_{k-1},s(n_k)) = \phat_S\{\xi \ass \phat(X_1,\ldots,X_j,n_1,\ldots,n_{k-1},n_k)\}$$
we obtain for $\sigma(p)=\top$
\[
\begin{array}{l}
\sigma(\phat(X_1,\ldots X_j,n_1,\ldots,n_{k-1},s(n_k)))\Eval\Theta[\sigma] =\\[1ex]
\sigma(\phat_S)\Eval\Theta[\sigma]\{\xi \ass \sigma(\phat(X_1,\ldots,X_j,n_1,\ldots,n_{k-1},n_k))\Eval\Theta[\sigma]\} = \\[1ex]
\sigma(\phat_S)\Eval\Theta[\sigma] \{\xi \ass\sigma(\Ap(\Theta,\phat(X_1,\ldots,X_j,n_1,\ldots,n_{k-1},n_k))|_p)\}\Eval = \\[1ex]
\sigma(\Ap(\Theta,\phat_S)|p)\Eval\{\xi \ass\sigma(\Ap(\Theta,\phat(X_1,\ldots,X_j,n_1,\ldots,n_{k-1},n_k))|_p)\Eval\} = \\[1ex]
\sigma(\Ap(\Theta,\phat(X_1,\ldots X_j,n_1,\ldots,n_{k-1},s(n_k))|p))\Eval.
\end{array}
\]
The case for nonminimal $\phat$ is analogous with the difference that the induction hypothesis holds for $\phat_S$, $\phat_b$ as they contain only schematic symbols smaller than $\phat$.
\end{proof}

\subsection{Schematic Unification} 

Unification is crucial to any machine oriented logic, in particular to the resolution calculus~\cite{robinson1965machine}. In Section~\ref{sec.refschemata} we will present a refutational calculus for formula schemata, generalizing the first-order resolution principle to schemata. In particular we will concentrate on unification in $\omega \iota$-theories. Our first step consists in defining the concept of schematic unification.
\begin{definition}\label{def.s-unif}
Let $T^*$ be a $\omega \iota$-theory $T^*$ and $t_1,\ldots,t_\alpha \in \Tiotahat_s(T^*)$. Then the set $\{t_1,\ldots,t_\alpha\}$ is {\em unifiable} if, for all $\sigma \in \Scal$,
$\{\sigma(t_1)\EvalTstar,\ldots,\sigma(t_\alpha)\EvalTstar\}$ is first-order unifiable. 
\end{definition}
Unfortunately, unification in $\omega \iota$-theories is undecidable as we will show below. First we will show that all primitive recursive functions can be computed in such theories by translating the constructions to simple $\omega \iota$-theories. We define a constant $\zeroI$ for zero (which is now an individual constant, not a number constant) and a one-place function symbol 
$\suc$. The successor function and the predecessor function are defined via the following schematic symbols $\shat$, $\that$, $\numhat$ (all of them having the profile $(1,1)$):
\begin{eqnarray*}
D(\numhat) &=& \{\numhat(x,s(n)) = \suc(\numhat(x,n)), \numhat(x,\bar{0}) = \zeroI,\\
D(\shat) &=& \{\shat(x,n) = \suc(\numhat(n))\},\\
D(\that) &=& \{\that(x,s(n)) = \numhat(x,n),\ that(x,\bar{0}) = \zeroI\}.
\end{eqnarray*}
It is easy to see that for $\sigma(n) = \alpha$ $\sigma(\shat(x,n))\Eval = \suc^\alpha(\zeroI)$ which represents $s^\alpha(\bar{0})$ within the individual terms. In the same way $\that(x,n)$ represents the predecessor $p$: 
\[
\begin{array}{l}
\that(x,\bar{0})\Eval = \zeroI,\\
\that(x,\bar{1})\Eval = \numhat(x,\bar{0}) = \zeroI,\\
\that(x,\bar{2})\Eval = \numhat(x,\bar{1})\Eval = \suc(x,\bar{0})\Eval = \suc(\zeroI) \mbox{ etc.}
\end{array}
\]
An easy induction argument yields that for all $\sigma(n)  = \alpha \geq \bar{2}$ $\sigma(\that(x,n))\Eval = \suc^{\alpha-1}(\zeroI)$ which represents $s^{\alpha-1}(\bar{0})$ in the individual terms. It is easy to see that the variable $x$ (which could be replaced by any first-order term) plays no role in the definition above. We illustrate the recursive definition of $+$ within the individual term schemata and show how to restrict the individual terms such that the schema evaluates to the new ''numerals''. We start with 
$$D(\oplus) = \{\oplus(x,s(n)) = \suc(\oplus(x,n)),\ \oplus(x,\bar{0}) =x\}.$$
This definition would be inappropriate as $x$ which is an individual variable cannot be evaluated under a $\sigma \in \Scal$. Instead we have to evaluate $\oplus(\shat(x,m),n)$ under $\sigma$ which gives the desired result. Using this trick we can simulate the computation of primitive recursive functions in simple $\omega \iota$-theories. 
It is easy to see that the $\omega \iota$-theories obtained in this way simulate the corresponding $\omega$-theories computing primitive recursive functions. 
We call the set of schemata $\omega \iota$ theories corresponding to the primitive recursive schemata in $T^\omega$ as $\PRLiota$. While, given a $\sigma \in \Scal$, every term in  $T^\omega$ evaluates to a numeral, the terms in $\PRLiota$ evaluate to elements in $\Numiota$ which is defined as 
\begin{itemize}
\item $\zeroI \in \Numiota$,
\item if $t \in \Numiota$ then $\suc(t) \in \Numiota$.
\end{itemize}
\begin{theorem}\label{the.sunification-undecidable}
The unification problem for schematic terms in $\PRLiota$ is undecidable.
\end{theorem}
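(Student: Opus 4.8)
The plan is to reduce a classically undecidable problem about primitive recursive functions to the schematic unification problem of Definition~\ref{def.s-unif}, exploiting the simulation of primitive recursion inside $\PRLiota$ developed just above. Concretely, I would reduce from the problem of deciding, for a given primitive recursive function $f\colon \N \to \N$, whether $f$ is identically zero. This problem is undecidable: by Kleene's step-counting construction the predicate ``machine $M$ halts on input $w$ within $n$ steps'' is primitive recursive, so for fixed $M,w$ the function $f_{M,w}(n)$ returning $1$ on halting within $n$ steps and $0$ otherwise is primitive recursive, and $f_{M,w}\equiv 0$ holds exactly when $M$ does not halt on $w$; the latter is the complement of the halting problem and hence undecidable.

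Next I would turn $f$ into a unification instance. By the construction preceding the theorem, from a primitive recursive definition of $f$ one effectively obtains a simple $\omega\iota$-theory $T^*\in\PRLiota$ together with a schematic term $t_f$ containing only the single parameter $n$ such that $\sigma(t_f)\Eval = \suc^{f(k)}(\zeroI)\in\Numiota$ whenever $\sigma(n)=\bar k$. Let $t_0$ be a term with $\sigma(t_0)\Eval = \zeroI$ for every $\sigma$ (for instance $t_0 = \zeroI$). I then consider the unification problem for the set $\{t_f,t_0\}$. The key observation is that for every $\sigma$ both $\sigma(t_f)\Eval$ and $\sigma(t_0)\Eval$ lie in $\Numiota$ and are therefore ground first-order terms, and two ground terms are first-order unifiable iff they are syntactically equal. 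Since $t_f,t_0$ mention only $n$, ranging over all $\sigma\in\Scal$ amounts to ranging over all $k\in\N$. Hence $\{t_f,t_0\}$ is unifiable in the sense of Definition~\ref{def.s-unif} precisely when $\suc^{f(k)}(\zeroI)=\zeroI$ for all $k$, i.e. when $f(k)=0$ for all $k$, i.e. when $f\equiv 0$.

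Because the passage $f\mapsto\{t_f,t_0\}$ is effective (the simulation of primitive recursion in $\PRLiota$ is), any decision procedure for schematic unifiability would yield a decision procedure for ``$f\equiv0$'', a contradiction; this establishes undecidability. The main obstacle, and the step deserving the most care, is ensuring that the two schematic terms really evaluate to \emph{ground} terms, so that unifiability collapses to equality of the encoded values: this is exactly secured by the earlier observations that the auxiliary individual variable plays no role in the primitive-recursive encodings and that terms of $\PRLiota$ evaluate into $\Numiota$. A secondary point to verify is the faithful correspondence between the universal quantifier over $\sigma\in\Scal$ in Definition~\ref{def.s-unif} and the universal quantifier over arguments $k\in\N$ of $f$, which holds because the constructed instance uses a single parameter.
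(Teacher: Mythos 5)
Your proposal is correct and follows essentially the same route as the paper: both reduce the undecidable problem of whether a primitive recursive function is identically zero to the unification instance $\{t,\zeroI\}$, using the fact that terms of $\PRLiota$ evaluate to ground terms in $\Numiota$ so that first-order unifiability collapses to syntactic equality. The only difference is that you derive the undecidability of ``$f\equiv 0$'' directly from the halting problem via Kleene's step-counting predicate, whereas the paper simply cites the undecidability of the equivalence problem for primitive recursive programs.
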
 
\begin{proof}
It is well known that the equivalence problem of primitive recursive programs is undecidable~\cite{schoning1998equivalence}. Both the schematic terms defined in $\omega$-theories and their modifications in individual term schemata define a programming language for primitive recursive functions. In particular it is undecidable whether a primitive recursive program computes the function {\it zero} (${\it zero}(n) = 0$ for all $n \in \N$). Therefore the equivalence problem  $t = \zeroI$, where $t \in \PRLiota$,  is undecidable; that means it is undecidable whether for all $\sigma \in \Scal$ $\sigma(t)\Eval = \zeroI$ (note that $\sigma(\zeroI)\Eval = \zeroI$ for all $\sigma \in \Scal$). We consider now the unification problem $M\colon \{t,\zeroI\}$. According to Definition~\ref{def.s-unif} $M$ is unifiable iff for all $\sigma \in \Scal$ the set $M(\sigma)\colon \{\sigma(t)\Eval,\zeroI\Eval\} = \{\sigma(t)\Eval,\zeroI\}$ is first-order unifiable. But as $t \in \PRLiota$ we have $\sigma(t)\Eval \in \Numiota$, i.e. $\sigma(t)\Eval$ is a normalized ground term. Therefore $M(\sigma)$ is only unifiable if $\sigma(t)\Eval = \zeroI$, and $M$ is unifiable iff for all $\sigma \in \Scal$ $\sigma(t)\Eval = \zeroI$ iff the equivalence problem $t = \zeroI$ holds. Hence the unification problem for $\PRLiota$ is undecidable.
\end{proof}
As schematic unification is not only undecidable but even $\Pi_1$-complete there exists no complete method for unification, i.e. there are always unifiers which cannot be found by algorithmic methods. However, we need a method for subclasses of the problem which does not only detect unifiability but also produces a syntactic representation of a unifier. Below we extend unification to standard terms. 
\begin{definition}\label{def.unif-standard-terms}
Let $\Tcal\colon \{t_1,\ldots,t_\alpha\}$ be a set of standard terms over the parameters $\Ncal_0\colon \{n_1,$ $\ldots,$ $n_\beta\}$ and $\Dcal$ the set of states over $\Ncal_0$. A standard $s$-substitution $\Theta$ is called a {\em  standard unifier} of $\Tcal$ if for all $p \in \Dcal$ 
$$\Ap(\Theta,t_1)|p = \ldots = \Ap(\Theta,t_\alpha)|p.$$
$\Tcal$ is called {\em uniformly standard unifiable} if there exists a standard unifier of $\Tcal$.
\end{definition}
\begin{examp}\label{ex.standard-unification}
Let $T =(T_1,\emptyset)$ be a  $\omega\iota$-theory where  $T_1 =  (\that,\{\that,\shat\},D(\{\that,\shat\}),\{x,y\},<)$ and  
\begin{itemize}
\item $\pi(\that) = (2,2)$, $\pi(\shat) = (1,1)$,
\item $x,y \in \Viota_0$,
\item $\shat < \that$,
\item $g \in \Fiota_2, h \in \Fiota_1$.
\end{itemize}
Moreover,
\begin{eqnarray*}
D(\that) &=& \{\that(x,y,n_1,s(n_2)) = g(\that(x,y,n_1,n_2),\shat(x,n_1)),\ \that(x,y,n_1,\bar{0})  = y\},\\
D(\shat) &=& \{\shat(x,s(n_1)) = h(\shat(x,n_1)),\ \shat(x,\bar{0}) = x\}.
\end{eqnarray*}
Here the set of parameters is $\{n_1,n_2\}$ and $\Dcal = \{p_1,\ldots,p_9\}$ as in Example~\ref{ex.appl-Viota-2}. Let $X,Y,Z \in \Viota_2$ and 
$$\Tcal = \{g(X(n_1,\bar{0}),\that(X(n_1,n_2),Y(n_1,n_2),n_1,n_2)), g(\shat(X(\bar{0},n_2),n_1), Z(n_1,n_2)))\}.$$
Then 
$$\Theta = \{X(n_1,\bar{0}) \ass \shat(X(\bar{0},n_2),n_1), \ Z(n_1,n_2) \ass \that(X(n_1,n_2),Y(n_1,n_2),n_1,n_2))\}$$
is a standard unifier of $\Tcal$ which is easily realizable and $\Tcal$ is uniformly standard-unifiable. Note that 
$$\Tcal'\colon  \{g(X(n_1,\bar{0}),\that(X(n_1,n_2),Y(n_1,n_2),n_1,n_2)), g(\shat(X(\bar{0},n_2),n_1), Y(\bar{0},\bar{0})))\}$$
is not standard unifiable as for $\sigma(n_1) = \sigma(n_2) = \bar{0}$ we obtain 
$$\sigma(\Tcal)\Eval = \{g(X(\bar{0},\bar{0}),\that(X(\bar{0},\bar{0}),Y(\bar{0},\bar{0}),\bar{0},\bar{0})), g(\shat(X(\bar{0},\bar{0}),\bar{0}), Y(\bar{0},\bar{0})))\}$$
which is not first-order unifiable due to occurs-check: $Y(\bar{0},\bar{0})$ occurs in the term  $\that(X(\bar{0},$ $\bar{0}),$ $Y(\bar{0},\bar{0}),$ $\bar{0},$ $\bar{0}))$. Indeed, the standard substitution 
$$\Theta'\colon \{X(n_1,\bar{0}) \ass \shat(X(\bar{0},n_2),n_1), \ Y(\bar{0},\bar{0}) \ass \that(X(n_1,n_2),Y(n_1,n_2),n_1,n_2))\}$$ 
is not a standard unifier of $\Tcal'$ as for $p_1 = n_1=\bar{0} \land n_2 = \bar{0}$ 
\[
\begin{array}{l}
\Ap(\Theta', g(X(n_1,\bar{0}),\that(X(n_1,n_2),Y(n_1,n_2),n_1,n_2)))|_{p_1} \neq  \Ap(\Theta',g(\shat(X(\bar{0},n_2), Y(\bar{0},\bar{0})))|_{p_1}.
\end{array}
\]
\end{examp}
The unification problem 
$$\Tcal'\colon  \{g(X(n_1,\bar{0}),\that(X(n_1,n_2),Y(n_1,n_2),n_1,n_2)), g(\shat(X(\bar{0},n_2),n_1), Y(\bar{0},\bar{0})))\}$$
in Example~\ref{ex.standard-unification} shows that we may not simply replace a variable expression by a term without knowing that the variable expressions are not parameter-unifiable with variable expressions (in the example above $Y(n_1,n_2)$ is parameter-unifiable with $Y(\bar{0},\bar{0})$ which generates an occurs-check). It also turns out that it is easier to define unifiers for all $p \in \Dcal$ separately than starting with a substitution and testing whether it is actually a uniform standard-unifier. The example below shows that the construction of a standard unifier cannot be obtained by using the variable expressions of the terms to be unified.
\begin{examp}\label{ex.uniform}
Let $\Tcal = \{g(X_1(n),X_1(\bar{0})),\ g(X_2(\bar{0}),X_2(p(n)))\}$. Neither 
\begin{eqnarray*}
\Theta_1 &=& \{X_1(n) \ass X_2(\bar{0}),\   X_1(\bar{0}) \ass X_2(p(n))\} \mbox{ nor }\\
\Theta_2 &=& \{X_2(\bar{0}) \ass X_1(n),\ X_2(p(n)) \ass X_1(\bar{0})\}
\end{eqnarray*}
are extended $s$-unifiers as the domain variables are parameter-unifiable. Is 
\begin{eqnarray*}
\Theta_3 &=& \{X_1(n) \ass X_2(\bar{0}),\ X_2(p(n)) \ass X_1(\bar{0})\} \mbox{ an  s-unifier? }
\end{eqnarray*}
No, it is not! Consider $\Theta_3[\sigma]$ for $\sigma(n) = \bar{0}$. Then 
\begin{eqnarray*}
\Theta_3[\sigma] &=& \{X_1(\bar{0}) \ass X_2(\bar{0}), X_2(\bar{0}) \ass X_1(\bar{0})\}
\end{eqnarray*}
which is a standard $s$-substitution but no unifier of the set 
$$\sigma(\Tcal)\Eval = \{g(X_1(\bar{0}),X_1(\bar{0})),\ g(X_2(\bar{0}),X_2(\bar{0}))\}.$$
The remedy lies in the computation of unifiers for the sets 
$$ \Tcal(p_i) = \psi_{p_i}(\Tcal) = \{\psi_{p_i}(g(X_1(n),X_1(\bar{0}))),\ \psi_{p_i}(g(X_2(\bar{0}),X_2(p(n))))\}$$
for $i=1,2,3$ and 
\begin{eqnarray*}
p_1 &=& n=\bar{0},\\
p_2 &=& n \neq \bar{0} \land p(n) = \bar{0},\\
p_3 &=& n \neq \bar{0} \land p(n) \neq \bar{0}.
\end{eqnarray*}
Here we get 
\begin{eqnarray*}
\Tcal(p_1) &=&  \{g(X_1(\bar{0}),X_1(\bar{0})),\ g(X_2(\bar{0}),X_2(\bar{0}))\},\\
\Tcal(p_2) &=& \{g(X_1(\bar{1}),X_1(\bar{0})),\ g(X_2(\bar{0}),X_2(\bar{0}))\},\\
\Tcal(p_3) &=& \{g(X_1(n),X_1(\bar{0})),\ g(X_2(\bar{0}),X_2(p(n)))\}.
\end{eqnarray*}
Let us define 
\begin{eqnarray*}
\Theta_1 &=& \{X_1(\bar{0}) \ass X_2(\bar{0})\} \mbox{ and }\\
\Theta_2 &=& \{ X_1(\bar{1}) \ass X_2(\bar{0}), X_1(\bar{0}) \ass X_2(\bar{0})\}.
\end{eqnarray*}
Obviously $\Theta_1$ is a unifier of $\Tcal(p_1)$ and $\Theta_2$ of $\Tcal(p_2)$. In case $\Tcal(p_3)$ we seem to be lost as this set of terms is the original one, but we are not!
The expression 
$$\Theta_3\colon \{X_1(n) \ass X_2(\bar{0}),\ X_1(\bar{0}) \ass X_2(p(n))\}$$
is not an extended $s$-substitution, but for $\sigma \in \Scal(p_3)$ it is: indeed, for all $\sigma \in \Scal$ such that $\sigma(n) \neq \bar{0}$ and $\sigma(p(n))\Eval \neq \bar{0}$ $\Theta_3[\sigma]$ is indeed a first-order substitution and $\Theta_3[\sigma]$ unifies 
$\sigma(\Tcal(p_3))$. 
\end{examp}
The last observation above leads us to the following definition:
\begin{definition}\label{def.standard-unif-on-states}
Consider $\Tiotahat(T^*)$ for a theory $T^*$ over the parameter set $\Ncal_\alpha = \{n_1,\ldots,n_\alpha\}$ and let $\Dcal$ be the set of all states over $\Ncal_\alpha$. An expression of the form 
$$\Theta\colon \{V_1 \ass t_1,\ldots, V_\beta \ass t_\beta\}$$
for standard variable expressions $V_i$ over $\Ncal_\alpha$ and $t_i \in \Tiotahat(T^*)$ 
is called a standard $s$-substitution over $p$ for $p \in \Dcal$ if $\Theta[\sigma]$ is a first-order substitution for all 
$\sigma \in \Scal(p)$.  $\Theta$ is called a standard $s$-unifier over $p$  of a set $\Tcal$ of terms in $\Tiotahat(T^*)$ if 
$\Theta$ $s$-unifies $\Tcal$, i.e.  for all $\sigma \in \Scal(p)\colon|\sigma(\Tcal\Theta))\Eval|=1$.
\end{definition} 
\begin{examp}\label{s-unif-over-p}
Let $\Theta_3 = \{X_1(n) \ass X_2(\bar{0}),\ X_1(\bar{0}) \ass X_2(p(n))\}$ and $p_1,p_2,p_3$ be as in 
Example~\ref{ex.uniform}. Then  $\Theta_3$ is a standard $s$-unifier of $\Tcal$ over $p_3$ for 
$$\Tcal = \{g(X_1(n),X_1(\bar{0})),\ g(X_2(\bar{0}),X_2(p(n)))\}.$$
Indeed, $\Tcal\Theta_3 = \{g(X_2(\bar{0}),X_2(p(n))\}$ and $\Theta_3$ is a standard $s$-substitution over $p_3$. 
Note that $\Theta_3$ is {\em not} a standard $s$-substitution over $p_1$. 
\end{examp} 
\begin{lemma}\label{le.standard-unif.}
Let $\Ncal_\alpha$ and $\Dcal$ as in Definition~\ref{def.standard-unif-on-states} and $p \in \Dcal$. Let $\Tcal$ be a set of standard terms such that $\psi_p(\Tcal) = \Tcal$ and $V_1,\ldots,V_\beta$ be arbitrary standard variable expressions in $\Tcal$. Then for all standard terms $t_1,\ldots,t_\beta$ in $\Tiotahat(T^*)$ for a theory $T^*$ the expression 
$$\{V_1 \ass t_1,\ldots, V_\beta \ass t_\beta\}$$
is a standard $s$-substitution if the $V_j$ are pairwise different. 
\end{lemma}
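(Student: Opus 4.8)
The statement to establish is that, under the stated hypotheses, $\Theta\colon \{V_1 \ass t_1,\ldots,V_\beta \ass t_\beta\}$ is a standard $s$-substitution over $p$ in the sense of Definition~\ref{def.standard-unif-on-states}; that is the only reading compatible with Example~\ref{ex.uniform}, since the global notion would fail here (e.g. $X_1(n)$ and $X_1(\bar 0)$ are distinct yet parameter-unifiable). By that definition it suffices to show that $\Theta[\sigma]$ is a first-order substitution for every $\sigma \in \Scal(p)$. The plan is to reduce this to injectivity of the evaluated domain. By Definition~\ref{def.sem-extssub} we have $\Theta[\sigma] = \{\sigma(V_1)\Eval \ass \sigma(t_1)\Eval,\ldots,\sigma(V_\beta)\Eval \ass \sigma(t_\beta)\Eval\}$. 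Each $\sigma(V_j)\Eval$ is a first-order variable, being the evaluation of a variable expression, and each $\sigma(t_j)\Eval$ is a first-order term, since the $t_j$ are standard terms whose normalization lands in $\Tiota_0$ (cf. the argument in Proposition~\ref{prop.s-subst}). Hence the only nontrivial requirement remaining is that the domain variables $\sigma(V_1)\Eval,\ldots,\sigma(V_\beta)\Eval$ be pairwise distinct for every $\sigma \in \Scal(p)$.

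To prove this pairwise distinctness I would fix $i \neq j$ together with $\sigma \in \Scal(p)$ and split on the variable classes of $V_i$ and $V_j$. If they start with different variable classes, their evaluations lie in disjoint classes by Definition~\ref{def.Viota}, so $\sigma(V_i)\Eval \neq \sigma(V_j)\Eval$ trivially. The interesting case is $V_i = X(r_1,\ldots,r_k)$ and $V_j = X(s_1,\ldots,s_k)$ of one common class $X$. Here the hypothesis $\psi_p(\Tcal)=\Tcal$ is used essentially: since $V_i$ and $V_j$ occur in $\Tcal$, we get $\psi_p(V_i)=V_i$ and $\psi_p(V_j)=V_j$, so both expressions are already $\psi_p$-normalized. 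Because $V_i \neq V_j$, there is a coordinate $l$ with $r_l \neq s_l$, and in that coordinate both $r_l$ and $s_l$ are in normal form relative to the condition $p(l) \in \{C_{l,1},C_{l,2},C_{l,3}\}$.

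The main work, and the step I expect to be the crux, is the coordinatewise claim that for $\psi_p$-normalized coordinates $r_l \neq s_l$ forces $\sigma(r_l)\Eval \neq \sigma(s_l)\Eval$ for every $\sigma \in \Scal(p)$. This is exactly the equivalence established inside the proof of Lemma~\ref{le.psi-p}, where two $\psi_p$-normalized variable expressions of the same class are shown to be parameter-unifiable over $p$ if and only if they are syntactically equal; contrapositively, distinct normalized expressions never coincide under any $\sigma \in \Scal(p)$. I would invoke that case analysis (over the three conditions $C_{l,1},C_{l,2},C_{l,3}$ and the four admissible forms $n_l, p(n_l), s(n_l), \bar 0$ of $r_l$ and $s_l$) to obtain $\sigma(r_l)\Eval \neq \sigma(s_l)\Eval$, hence $\sigma(V_i)\Eval \neq \sigma(V_j)\Eval$. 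Combining the two cases yields pairwise distinctness of the domain for all $\sigma \in \Scal(p)$, which completes the reduction and shows that $\Theta[\sigma]$ is a first-order substitution, i.e. that $\Theta$ is a standard $s$-substitution over $p$. I would close by noting that the normalization hypothesis is indispensable, as Example~\ref{ex.uniform} already exhibits distinct variable expressions that become parameter-unifiable once $\psi_p(\Tcal)=\Tcal$ is dropped.
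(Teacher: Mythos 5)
Your proof is correct and follows essentially the same route as the paper's: reduce the claim to showing that distinct domain variable expressions are not parameter-unifiable over $p$, and for same-class expressions use $\psi_p(\Tcal)=\Tcal$ together with the equivalence established inside the proof of Lemma~\ref{le.psi-p} (equality iff coincidence under some $\sigma \in \Scal(p)$). Your reading of the conclusion as ``standard $s$-substitution over $p$'' and your explicit handling of the distinct-variable-class case are slightly more careful than the paper's writeup, but the argument is the same.
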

\begin{proof}
It is enough to show that for $i,j \in \{1,\ldots,\beta\}$ and $i \neq j$ $\{V_i,V_j\}$ is not parameter-unifiable. 
Let $X(r_1,\ldots,r_\gamma)$ and $X(s_1,\ldots,s_\gamma)$ be two different variable expressions occurring in $\Tcal$ and  
$\psi_p(X(r_1,\ldots,r_\gamma)) = X(r_1,\ldots,r_\gamma)$, $\psi_p(X(s_1,\ldots,s_\gamma)) = X(s_1,\ldots,s_\gamma)$. 
In the proof of Lemma~\ref{le.psi-p} we have shown that for the variable expressions above we have
\begin{itemize}
\item $X(r_1,\ldots,r_\gamma) =  X(s_1,\ldots,s_\gamma)$ iff there exists a $\sigma \in \Scal(p)$ such that 
$$\sigma(X(r_1,\ldots,r_\gamma))\Eval = \sigma(X(s_1,\ldots,s_\gamma))\Eval.$$
\end{itemize}
I.e., as $X(r_1,\ldots,r_\gamma)$  and $X(s_1,\ldots,s_\gamma)$ are different they are not parameter-unifiable.
\end{proof} 
Below we define a sound (though incomplete) unification algorithm for sets of terms in $\Tiotahat(T^*)$ for a theory $T^*$. Given a set of standard terms $\Tcal\colon \{t_1,\ldots,t_\alpha\}$ for $\alpha>1$ (to be unified) we start with the following preprocessing:
\begin{itemize}
\item Check whether there are two different variable expressions which are parameter-unifiable.
\item If there are none then define $\Tcal^* = \{\Tcal\}$.
\item If there exist different parameter-unifiable variable expressions then select the variable expression with the highest arity, e.g. 
$X(s_1,\ldots,s_\beta)$ and compute the set of all states $p \in \Dcal$ for $\Dcal$ being the set of states corresponding to $X(n_1,\ldots,n_\beta)$. Then define 
$$\Tcal^* = \{\Tcal(p) \mid p \in \Dcal\}.$$
\end{itemize}
For the set $\Tcal$ (respectively $\Tcal(p)$ for $p \in \Dcal$) we define the set 
\begin{eqnarray*}
\Ucal(\Tcal) &=& \{t_1 \simeq t_2,\ldots,t_1 \simeq t_\alpha\} \mbox{ respectively },\\
\Ucal(\Tcal(p)) &=& \{\psi_p(t_1) \simeq \psi_p(t_2),\ldots,\psi_p(t_1) \simeq \psi_p(t_\alpha)\}.
\end{eqnarray*}
Now assume that $\Ucal = \Ucal(\Tcal)$ or $\Ucal = \Ucal(\Tcal(p))$. Let $\Vexp$ be the set of all standard variable expressions. Note that we have shown that the unification problem is undecidable and even that unifiability is not recursively enumerable. Therefore any unification algorithm for standard terms will be incomplete by the nature of the problem. When we arrive at an equation of the form
$$\shat(t_1,\ldots,t_\beta,r_1,\ldots,r_\gamma) \simeq \that(t'_1,\ldots,t'_\delta,r'_1,\ldots,r'_\delta)$$
such that the terms are not syntactically equal then we stop the algorithm (to be defined) although the terms might be unifiable. The following example illustrates the problem.
\begin{examp}\label{ex.unifalg}
Let us consider the set $\Ucal$ consisting of the single equation
$$\{\fhat(x,y,m,n) \simeq \fhat(u,v,m,s(n))\}.$$
for 
\[
\begin{array}{l}
D(\fhat) =\{\fhat(x,y,m,\bar{0}) = \ghat(x,y,m),\ \fhat(x,y,m,s(n)) = h(x,y,\fhat(x,y,m,n))\},\\
D(\ghat) = \{\ghat(x,y,\bar{0}) = h(x,y,\bar{0}), \ghat(x,y,s(n)) = h(\ghat(x,y,n),x,y)\}.
\end{array}
\]
The problem $\Ucal$ is not solvable as 
\[
\begin{array}{l}
\fhat(x,y,m,\bar{0}) = \ghat(x,y,m),\ \fhat(u,v,m,\bar{1}) = h(u,v,\ghat(u,v,m)) \mbox{ and }\\
\ghat(x,y,m) \mbox{ and } h(u,v,\ghat(u,v,m)) \mbox{ are not unifiable.}
\end{array}
\]
For the following set of definitions $\Ucal$ unsolvability does not appear so ''soon'':
\[
\begin{array}{l}
D(\fhat) =\{\fhat(x,y,m,\bar{0}) = \ghat(x,y,m),\ \fhat(x,y,m,s(n)) = h(x,y,\fhat(x,y,m,n))\},\\
D(\ghat) = \{\ghat(x,y,n) = x\}.
\end{array}
\]
Indeed, 
$$ \fhat(x,y,m,\bar{0}) = x,\ \fhat(u,v,m,\bar{1}) = h(u,v,u), $$
$$ \fhat(x,y,m,\bar{1}) = h(x,y,x), \fhat(u,v,m,\bar{2}) = h(u,v,h(u,v,u)).$$
While $\{\fhat(x,y,m,\bar{0}),\fhat(u,v,m,\bar{1})\}$ is unifiable $\{\fhat(x,y,m,\bar{1}),\fhat(u,v,m,\bar{2})\}$ is not (occurs check).
On the other hand, for 
\[
\begin{array}{l}
D(\fhat) =\{\fhat(x,y,m,\bar{0}) = \ghat(x,y,m),\ \fhat(x,y,m,s(n)) = f(\fhat(x,y,m,n))\},\\
D(\ghat) = \{\ghat(x,y,n) = x\}.
\end{array}
\]
we get (informally) $\fhat(x,y,m,k) = f^k(x), \fhat(u,v,m,s(k)) = f^{k+1}(u)$ and $\Ucal$ is unifiable by the simple unifier $\Theta\colon \{x \ass f(u)\}$. But even for this simple case we would need a proof by induction that $\Theta$ is indeed a unifier.
\end{examp}

By undecidability of the problem and by the fact that unifiability is a $\Pi_1$-complete property, the only property which we can obtain is soundness. We distinguish several different types of equations.
\begin{definition}
An equation $s \simeq t$ is called 
\begin{itemize}
\item a {\em clash-equation} if the head symbols of $s$ and $t$ are different function symbols in $\Fiota$,
\item an {\em occurs-equation} if one of $s$,$t$ is a variable expression properly occurring in the other term. 
\item a {\em complex equation} if $s$ and $t$ are not variable expressions $s \neq t$ and at least one of the head symbols is in 
$\Fiotahat$.
\item an {\em admissible equation} if one of $s,t$ is a variable expression, $s \neq t$ and the variable expression does not occur in the term.
\end{itemize}
\end{definition}
Below we define the reduction relation $\to_{\Rcal}$ inspired by the Martelli-Montanari unification algorithm in~\cite{Martelli-Montanari} working on finite sets of equations $\Ucal$ but does not apply the substitution rule. Instead the algorithm is a ''hybrid'' with the algorithm defined in~\cite{Leitsch.1997} on page 66. Thereby we extend the syntax by the symbols $r,\bot$ and by the equation $r \simeq \bot$.
\begin{definition}\label{def.toRcal}
$\Ucal$ is called {\em irreducible} if either 
\begin{itemize}
\item $r \simeq \bot \in \Ucal$ or
\item $\Ucal = \emptyset$ or 
\item $\Ucal$ consists of admissible equations only.
\end{itemize}
 Otherwise $\Ucal$ is called reducible under $\to_{\Rcal}$ to be defined below.
Now assume $\Ucal$ is reducible, then 
\begin{itemize}
\item (failure) If $s \simeq t \in \Ucal$ and $s \simeq t$ is either an occurs-equation, a clash equation or a complex equation then $\Ucal \to_{\Rcal} \{r \simeq \bot\}$.
\item (remove identity) If $s \simeq s \in \Ucal$ then $\Ucal \to_{\Rcal} \Ucal \setminus \{s \simeq s\}$.
\item (decompose) If $f(s_1,\ldots,s_\alpha) \simeq f(r_1,\ldots,r_\alpha) \in \Ucal$ for $f \in \Fiota$ and $s_i,r_i \in \Tiotahat$ then 
 $$\Ucal \to_{\Rcal} \Ucal \setminus \{f(s_1,\ldots,s_\alpha) \simeq f(r_1,\ldots,r_\alpha)\} \union \{s_1 \simeq r_1,\ldots,s_\alpha \simeq r_\alpha\}.$$
\end{itemize}
If $\Ucal \to^*_{\Rcal} \Ucal'$ and $\Ucal'$ is irreducible, thus equal to $\emptyset$, equal to $\{r \simeq \bot\}$ or to a set of admissible equations, then we write $\Ucal' = \Ucal\Eval_{\Rcal}$.
\end{definition}

Now we define the unification algorithm $\UAL(\Tcal,\Theta)$, where $\Tcal$ is a finite set of terms in $\Tiotahat$ and $\Theta$ is an $s$-substitution:
\\[1ex]
\[
\begin{array}{l}
\UAL(\Tcal,\Theta) =\\
\Begin\\
\ \If\ \Ucal(\Tcal\Theta)\Eval_{\Rcal} = \emptyset\ \Then \mbox{ return } \Theta\\
\ \ \ \Else\ \If\ \Ucal(\Tcal\Theta)\Eval_{\Rcal} = \{r \simeq \bot\}\ \Then \mbox{ return } \bot\\
\ \ \ \ \Else\ \Begin\\
\ \ \ \ \ \  \mbox{ select an admissible equation } V \simeq t \mbox{ or }t \simeq V \mbox{ in } \Ucal;\\
\ \ \ \ \ \ \ \ \mbox{ return }\UAL(\Tcal\Theta\{V \ass t\},\Theta\{V \ass t\});\\
 \ \ \ \ \ \ \ \ \End ;\\
\End.
\end{array}
\]                       
\begin{lemma}\label{le.Rcal}
Let $\Ucal$ be a finite set of equations over $\Tiotahat$. Then 
\begin{itemize}
\item[(1)] $\to_{\Rcal}$ is defined on $\Ucal$ and $\Ucal\Eval_{\Rcal}$ is finite.
\item[(2)] If $\Ucal\Eval_{\Rcal} = \emptyset$ then all equations in $\Ucal$ are of the form $s \simeq s$.
\end{itemize}
\end{lemma}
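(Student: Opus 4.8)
The plan is to prove (1) by exhibiting a terminating, finiteness-preserving reduction, and (2) by tracking one simple syntactic invariant along any reduction that ends in $\emptyset$.

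For (1), I would first show that $\to_{\Rcal}$ is well-founded by assigning to each set the natural number $\mu(\Ucal) = \sum_{(s \simeq t) \in \Ucal}(\|s\| + \|t\|)$, where $\|u\|$ is the number of symbol occurrences in $u$, and setting $\mu(\{r \simeq \bot\}) = 0$. Each rule strictly decreases $\mu$: remove-identity deletes an equation $s \simeq s$ and drops $\mu$ by $2\|s\| \geq 2$; decompose replaces $f(s_1,\ldots,s_\alpha) \simeq f(r_1,\ldots,r_\alpha)$, of weight $2 + \sum_i(\|s_i\| + \|r_i\|)$, by the equations $s_i \simeq r_i$ of total weight $\sum_i(\|s_i\| + \|r_i\|)$, dropping $\mu$ by $2$; and a failure step sends any set to which it applies (necessarily of weight $\geq 2$, since a failure equation has weight $\geq 2$) to $\{r \simeq \bot\}$ of weight $0$. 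As $\mu \in \N$ strictly decreases along every step, every reduction sequence is finite and $\Ucal\Eval_{\Rcal}$ exists. Finiteness is immediate, since $\Ucal$ is finite and each rule maps a finite set to a finite set. If one additionally wants $\Ucal\Eval_{\Rcal}$ to be a genuine function of $\Ucal$, I would verify local confluence and invoke Newman's lemma: the only overlap on a single equation is remove-identity versus decompose on $s \simeq s$ (which reconverges, as decomposing $f(\vec s) \simeq f(\vec s)$ and then removing the trivial $s_i \simeq s_i$ yields the same set), while overlaps on distinct equations commute, a failure step in either branch leading to $\{r \simeq \bot\}$.

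For (2), I would argue the contrapositive via the invariant $P(\Ucal)$: ``$\Ucal = \{r \simeq \bot\}$, or $\Ucal$ contains a non-trivial equation'', where $s \simeq t$ is non-trivial if $s \neq t$. The key step is that $P$ is preserved by $\to_{\Rcal}$. Assume $P(\Ucal)$ and $\Ucal \to_{\Rcal} \Ucal'$ (the case $\Ucal = \{r \simeq \bot\}$ is irreducible, hence vacuous), and fix a non-trivial equation $e \colon s \simeq t$ in $\Ucal$. If the applied rule acts on some other equation, then $e$ survives in $\Ucal'$ unless a failure step collapses everything to $\{r \simeq \bot\}$; either way $P(\Ucal')$. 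If the rule acts on $e$ itself, remove-identity is impossible (as $e$ is non-trivial), a failure step gives $\{r \simeq \bot\}$, and decompose applies only when $e = f(s_1,\ldots,s_\alpha) \simeq f(r_1,\ldots,r_\alpha)$ with $f(\vec s) \neq f(\vec r)$, so some $s_i \neq r_i$ and the non-trivial equation $s_i \simeq r_i$ enters $\Ucal'$; again $P(\Ucal')$.

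Since $\emptyset$ satisfies neither disjunct of $P$, preservation of $P$ along $\Ucal \to^*_{\Rcal} \emptyset$ forces $\neg P(\Ucal)$; hence $\Ucal$ contains no non-trivial equation, i.e. every equation of $\Ucal$ has the form $s \simeq s$, which is (2). The main obstacle I anticipate is the bookkeeping around the failure rule, which does not act locally but rewrites the whole set: one must check that it breaks neither termination (handled by giving $\{r \simeq \bot\}$ weight $0$) nor the invariant (handled because its output is the first disjunct of $P$). A secondary subtlety worth flagging is that irreducible sets need not equal $\emptyset$ or $\{r \simeq \bot\}$, since a set consisting solely of admissible equations is already irreducible; thus (2) is precisely the statement pinning down which normal forms are $\emptyset$, and its decompose case rests on the elementary fact that $f(\vec s) \neq f(\vec r)$ forces a genuinely non-trivial component.
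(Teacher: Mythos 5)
Your proposal is correct and follows the same overall strategy as the paper: part (1) by a well-founded complexity measure that strictly decreases under $\to_{\Rcal}$, and part (2) by contraposition, tracking non-trivial equations through the reduction. The differences are in the details, and they mostly favour your version. For termination the paper uses an exponential measure $c$ with $c(V)=0$ for variable expressions, so a remove-identity step on $V \simeq V$ does not strictly decrease the paper's sum and one must additionally appeal to the decreasing number of equations; your symbol-count measure $\mu$, with every term of weight at least $1$, handles remove-identity, decompose and the global failure step uniformly in a single descent argument. For (2) the paper states in one line that a surviving non-trivial equation must eventually yield an admissible equation in the normal form; your explicit invariant $P$ ("equal to $\{r \simeq \bot\}$ or containing a non-trivial equation") is a cleaner formalization of the same idea and proves exactly what is needed, namely that $\emptyset$ cannot be reached from a set with a non-trivial equation, without the extra case analysis classifying the surviving equation as admissible. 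Your added confluence remark and the observation that irreducible normal forms may consist of admissible equations (so that $\Ucal\Eval_{\Rcal}$ as used in $\UAL$ really denotes the normal form, not only $\emptyset$ or $\{r \simeq \bot\}$) go beyond the paper's proof but are accurate and worth having.
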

\begin{proof}
(1): Assume that $\to_\Rcal$ is nonterminating on $\Ucal$. Then by Definition of $\to_{\Rcal}$ the rules (decompose) and (remove identity) must be applied infinitely often. But this is impossible as the complexity decreases under application of $\to_{\Rcal}$ via those rules. As complexity measure can choose e.g. the following one:
\begin{eqnarray*}
c(V) &=& 0 \mbox{ for variable expressions V},\\
c(f(t_1,\ldots,t_\alpha)) &=& 2^{c(t_1)+ \cdots +  c(t_\alpha)} \mbox{ for }f \in \Fiota,\\
c(\fhat(t_1,\ldots,t_\alpha,r_1,\ldots,r_\beta) &=& 2^{c(t_1)+ \cdot + c(t_\alpha)},\\
c(s \simeq t) &=& c(s)+c(t).
\end{eqnarray*}
(2) if there exists an equation $s \simeq t$ such that $s \neq t$ and $\to_{\Rcal}$ does not terminate with $r \simeq \bot$ then there must exist an equation of the form $V \simeq r$ or $r \simeq V$ in $\Ucal\Eval_{\Rcal}$ and $\Ucal\Eval_{\Rcal}$ is not empty.
\end{proof}
The unifier of a finite set of terms $\Tcal$ in $\Tiotahat$ is then computed by calling $\UAL(\Tcal,\epsilon)$ where $\epsilon$ denotes the identical substitution $\emptyset$.
\begin{theorem}\label{the.unif}
(1) For all finite sets $\Tcal$ of terms in $\Tiotahat$ such that no parameter-unifiable variable expressions exist in $\Tcal$ we have 
\begin{itemize}
\item $\UAL(\Tcal,\epsilon)$ terminates.
\item If $\UAL(\Tcal,\epsilon)$ terminates with an $s$-substitution $\Theta$ then $\Theta$ is an $s$-unifier of $\Tcal$.
\end{itemize}
(2) For all finite sets $\Tcal$ of terms with parameter-unifiable variable expressions and appropriate sets of states $\Dcal$ we get 
\begin{itemize}
\item $\UAL(\Tcal(p),\epsilon)$ terminates for all $p \in \Dcal$.
\item If, for all $p \in \Dcal$ $\UAL(\Tcal(p),\epsilon)$ terminates with an $s$-substitution $\Theta(p)$ then $\Tcal$ is unifiable and 
for $\sigma(p) = \top$ $\Theta(p)$ is a unifier of $\Tcal(p)$.
\end{itemize} 
\end{theorem}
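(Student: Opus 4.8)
The plan is to establish part (1) as the core and then reduce part (2) to it state by state. For termination in part (1), observe that whenever the normal form $\Ucal(\Tcal)\Eval_{\Rcal}$ is neither $\emptyset$ nor $\{r \simeq \bot\}$, Lemma~\ref{le.Rcal} guarantees it contains an equation $V \simeq r$ (or $r \simeq V$) with $V$ a variable expression; since no (failure) step was triggered, this equation is admissible, so the selection step of $\UAL$ is always applicable. The recursive call then proceeds with the term set after applying $\{V \ass t\}$. Because the equation is admissible we have $V \neq t$ and $V$ does not occur in $t$, and because $\Tcal$ contains no parameter-unifiable variable expressions every occurrence of $V$ is genuinely eliminated and none reintroduced; hence the number of distinct variable expressions strictly decreases at each step. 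As $\Tcal$ is finite this number is finite, and together with Lemma~\ref{le.Rcal} the procedure terminates, returning either an $s$-substitution $\Theta$ or $\bot$.

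For soundness in part (1), I would argue by induction on the recursion depth that on success the returned $\Theta$ is a syntactic unifier, i.e. $t_1\Theta = \cdots = t_\alpha\Theta$. In the base case $\Ucal(\Tcal)\Eval_{\Rcal} = \emptyset$, so by Lemma~\ref{le.Rcal} every reduced equation has the form $s \simeq s$; unwinding the (decompose) and (remove identity) steps forces the current terms to coincide. In the step case, the selected admissible equation $V \simeq t$ is satisfied exactly by the unifiers factoring through $\{V \ass t\}$ (the standard Martelli--Montanari invariant), so the property is preserved under composition. It then remains to pass from syntactic equality to $s$-unification: as no two distinct variable expressions of $\Tcal$ are parameter-unifiable, for every $\sigma \in \Scal$ distinct variable expressions evaluate to distinct first-order variables, so $t_1\Theta = \cdots = t_\alpha\Theta$ yields $\sigma(t_1\Theta)\Eval = \cdots = \sigma(t_\alpha\Theta)\Eval$; invoking the soundness of application (Theorem~\ref{the.soundness-appl}) this means $\Theta[\sigma]$ unifies $\sigma(\Tcal)\Eval$ for every $\sigma$, which is the $s$-unifier property required by Definition~\ref{def.s-unif}.

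For part (2), the preprocessing replaces $\Tcal$ by the family $\{\Tcal(p) = \psi_p(\Tcal) \mid p \in \Dcal\}$. Within a fixed state $p$, Lemma~\ref{le.standard-unif.} shows that the variable expressions of $\Tcal(p)$ left fixed by $\psi_p$ are pairwise non-parameter-unifiable, so each $\Tcal(p)$ satisfies the hypotheses of part (1). Applying part (1) to every $\Tcal(p)$ gives termination of $\UAL(\Tcal(p),\epsilon)$ and, on success, a $\Theta(p)$ with $|\sigma(\Tcal(p)\Theta(p))\Eval| = 1$ for all $\sigma \in \Scal(p)$, i.e. $\Theta(p)$ is a unifier of $\Tcal(p)$. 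To conclude unifiability of $\Tcal$ in the sense of Definition~\ref{def.s-unif}, fix any $\sigma \in \Scal$; since the states partition the parameter assignments there is a unique $p \in \Dcal$ with $\sigma(p)\Eval = \top$, and by the evaluation-preservation of $\psi_p$ (Lemma~\ref{le.psi-p}) we have $\sigma(\Tcal)\Eval = \sigma(\Tcal(p))\Eval$. Thus $\Theta(p)[\sigma]$, which unifies $\sigma(\Tcal(p))\Eval$, also unifies $\sigma(\Tcal)\Eval$; as $\sigma$ was arbitrary, $\Tcal$ is unifiable.

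The most delicate point, and the one I would develop with the greatest care, is the soundness transfer in part (1): the whole method relies on parameter non-unifiability making variable expressions behave exactly like ordinary first-order variables under every evaluation, so that the Martelli--Montanari invariant survives the passage through $\sigma$ and $\Eval$. Making precise that distinct $\psi_p$-fixed variable expressions never become equal after evaluation is exactly where the parameter-unifiability analysis inside Lemma~\ref{le.psi-p} and Lemma~\ref{le.standard-unif.} does the real work. A secondary subtlety in part (2) is that $\UAL$ is only sound and not complete, so the statement legitimately claims unifiability only when all states succeed and makes no assertion about states on which the algorithm returns $\bot$.
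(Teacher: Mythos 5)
Your proposal is correct and follows essentially the same route as the paper: termination by observing that each recursive call eliminates one variable expression (so the number of calls is bounded by the number of variable expressions in $\Tcal$), soundness by induction on the number of calls using the invariant $\Tcal\Theta' = \Tcal\{V \ass t\}\Theta$, and part (2) by reducing to part (1) state by state over the partition $\Dcal$. You additionally make explicit two steps the paper leaves implicit --- the passage from syntactic equality of $\Tcal\Theta$ to the semantic $s$-unifier property via non-parameter-unifiability and Theorem~\ref{the.soundness-appl}, and the appeal to Lemma~\ref{le.standard-unif.} to check that each $\Tcal(p)$ satisfies the hypotheses of part (1) --- which is a genuine improvement in rigor rather than a different argument.
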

\begin{proof}
We prove a more general result in replacing $\UAL(\Tcal,\epsilon)$ by $\UAL(\Tcal,\Theta)$ for an arbitrary $\Theta$. In every call of 
$\UAL(\Tcal,\Theta)$ we first compute $\Ucal(\Tcal\Theta)\Eval_{\Rcal}$, where the computation terminates and the result is finite. If  
$\Ucal(\Tcal\Theta)\Eval_{\Rcal} = \emptyset$ the the algorithm returns a substitution $\Theta$. If $\Ucal(\Tcal)\Eval_{\Rcal} = \{r \simeq \bot\}$ then $\UAL$ also terminates with $\bot$. If both cases do not apply then we call 
$$\UAL(\Tcal\Theta\{V \ass t\},\Theta\{V \ass t\}).$$
But, as $t$ does not contain $V$ the set of terms $\Tcal\Theta\{V \ass t\}$ contains one variable expression less. Therefore the number of calls is limited by the number of variable expressions in the input set $\Tcal$. Thus $\UAL$ is always terminating. This holds also for case (2) where we only have several different problems of the same type.\\[1ex]
It remains to show that, in case $\UAL$ returns a substitution $\Theta$, $\Theta$ is a unifier of $\Tcal$. We proceed by induction on the number $\gamma$ of calls of $\UAL$.
\begin{itemize}
\item $\gamma = 1$: Then $\Ucal(\Tcal\Theta)\Eval_{\Rcal} = \Theta$ and $\UAL$ terminates with $\Theta$. $\Theta$ is indeed a unifier of $\Tcal$ as by Lemma~\ref{le.Rcal} all equations in $\Ucal(\Tcal\Theta)$ are of type $s=s$ which implies that all terms in $\Tcal$ are equal.
\item (IH) Assume that, for all $\Tcal$ and $\Theta$ where $\UAL(\Tcal,\Theta)$ terminates after $\leq \gamma$ calls and yields a substitution $\Theta'$, this $\Theta'$ is a unifier of  $\Tcal\Theta$.\\[1ex]
Now let $\Tcal\Theta$ be a set of terms where $\UAL(\Tcal,\Theta)$ terminates after $\gamma+1$ steps and returns $\Theta'$. Then by definition of $\UAL$ there exists a variable expression $V$ and a term $t$ that 
$$\UAL(\Tcal,\Theta) = \UAL(\Tcal\Theta\{V \ass t\},\Theta\{V \ass t\}).$$
Now $\UAL(\Tcal\Theta\{V \ass t\},\Theta\{V \ass t\}$ must terminate after $\gamma$ steps and return $\Theta''$. By (IH), $\Theta''$ is a unifier of $\Tcal\Theta\{V \ass t\}$. Hence $\Theta' = \Theta\{V \ass t\}\Theta''$ and  $\Theta'$ is a unifier of 
$\Tcal\Theta$. 
\end{itemize}
Choosing $\Theta = \epsilon$ yields (1).  \\[1ex]
In case (2) we must get a unifier {\em for all} $p \in \Dcal$ as the $\Dcal$ define a partition of the parameter space. Assume that for some $p \in \Dcal$ $\UAL(\Tcal(p),\epsilon)$ terminates with $\bot$. As $\Scal'\colon \{\sigma \mid \sigma \in \Scal, \sigma(p)\Eval = \top\} \neq \emptyset$ either $\Tcal(p)\sigma$ for $\sigma \in \Scal'$ is not unifiable at all or no unifier can be found.
\end{proof}
\begin{examp}
Let $$\Tcal = \{f(\fhat(X(\bar{0},m),X(s(n),m),n,m),Y(\bar{0})), f(Z(\bar{0}),\ghat(Z(s(n)),n))\}$$  
for $f \in \Fiota$ and $\fhat,\ghat \in \Fiotahat$.
No pair in the set of  variable expressions 
$$\{X(\bar{0},m),X(s(n),m),Y(\bar{0}),Z(\bar{0}),Z(s(n))\}$$ 
is parameter-unifiable and so we can apply 
$\UAL$ to $\Tcal$ only. For better legibility we use the variable names  $x,y,z,u,v$ and write 
$$\Tcal = \{f(\fhat(x,y,n,m),z), f(u,\ghat(v,n))\}.$$ 
We now compute $\UAL(\Tcal,\epsilon)$: 
\begin{itemize}
\item $\Ucal(\Tcal) = \{f(\fhat(x,y,n,m),z) \simeq f(u,\ghat(v,n))\}$.
\item $\Ucal(\Tcal)\Eval_{\Rcal} = \{\fhat(x,y,n,m) \simeq u, z \simeq \ghat(v,n)\}$.
\item We choose the first equation, define $\Theta = \epsilon\{u \ass \fhat(x,y,n,m)\} =\{u \ass \fhat(x,y,n,m)\}$ and call 
$$\UAL(\Tcal\Theta,\Theta).$$
\item $\Ucal(\Tcal\Theta) = \{f(\fhat(x,y,n,m),z) \simeq f(\fhat(x,y,n,m),\ghat(v,n))\}$.
\item $\Ucal(\Tcal\Theta)\Eval_{\Rcal} = \{z \simeq \ghat(v,n)\}$. 
\item Now $\Theta \ass \Theta\{z \ass \ghat(v,n)\} = \{u \ass \fhat(x,y,n,m), z \ass \ghat(v,n)\}$  and we call again 
$$\UAL(\Tcal\Theta,\Theta).$$
Now $\Ucal(\Tcal\Theta) = \{f(\fhat(x,y,n,m),\ghat(v,n)) \simeq f(\fhat(x,y,n,m),\ghat(v,n)) \}$ and 
\item $\Ucal(\Tcal\Theta)\Eval_{\Rcal} = \emptyset$. So we return 
$$\{u \ass \fhat(x,y,n,m), z \ass \ghat(v,n)\}$$
which is indeed an $s$-unifier of $\Tcal$.
\end{itemize}
\end{examp}
In the resolution calculus to be defined in Section~\ref{sec.refschemata} we need an extension of $\UAL$ to formulas. For simplicity we will extend $\UAL$ only to quantifier-free formulas without schematic predicate symbols (but atoms may contain schematic terms, of course). On such formulas the unfication problem reduces to the unification problem of atoms of the form $P(t_1,\ldots,t_n)$ where $P$ is a nonschematic predicate symbol. If - in a set of atoms  $\Acal$ - not all atoms start with the same predicate symbol the set $\A$ is not unifiable. Otherwise consider 
$$\Acal = \{P(t^1_1,\ldots,t^1_k),\ldots,P(t^j_1,\ldots,t^j_k)\}$$
$\Acal$ is unifiable iff the set of equations $\Ucal$
\[
\begin{array}{l}
\{t^1_1 \simeq t^2_1,\ldots,t^1_1 \simeq t^j_1,\\
\ \ t^1_2 \simeq t^2_2,\ldots,t^1_2 \simeq t^j_2,\\
\ldots \\
\ldots\\
\ \ t^1_k \simeq t^2_k,\ldots,t^1_2 \simeq t^j_k\}
\end{array}
\]
is unfiable. Therefore, to unify $\Acal$, we apply $\UAL$ to $\Ucal$.
 
\section{The Resolution Calculus $\RPLnull$}\label{sec.ref-RPLnull}

The basis for the schematic refutational calculus is the calculus $\RPLnull$ for quantifier-free formulas, as introduced in \cite{CLL.2021}. This calculus combines dynamic normalization rules {\`a} la Andrews \cite{DBLP:journals/jacm/Andrews81} with the resolution rule, but in contrast to~\cite{DBLP:journals/jacm/Andrews81} does not restrict the resolution rule to atomic formulas.

The main motivation of the calculus $\RPLnull$ is that it can be extended to a schematic setting in a straightforward way, and that it is particularly suited for the extraction of Herbrand substitutions in the form of a Herbrand system of the schematic refutations.

The set of quantifier-free formulas in predicate logic will be denoted as $\PLnull$, and for simplicity we omit $\impl$, but can represent it by $\neg$ and $\lor$ in the usual way. In this setting, as sequents we consider objects of the form $\Gamma \seq \Delta$, where $\Gamma$ and $\Delta$ are multisets of formulas in $\PLnull$.

\begin{definition}[$\RPLnull$] \label{def.RPLnull}
The axioms of $\RPLnull$ are sequents $\seq F$ for $F \in \PLnull$.\\[1ex]
The rules are elimination rules for the connectives and the resolution rule.
\[
\infer[\AndrI]{\Gamma \seq \Delta, A}
   {\Gamma \seq \Delta, A \land B} \ \
\infer[\AndrII]{\Gamma \seq \Delta, B}
   {\Gamma \seq \Delta, A \land B} \ \
\infer[\Andl]{A,B,\Gamma \seq \Delta}
  { A \land B,\Gamma \seq \Delta
  }
\]
\[
\infer[\Orr]{\Gamma \seq \Delta,A,B}
 { \Gamma \seq \Delta, A \lor B} \ \
\infer[\OrlI]{A,\Gamma \seq \Delta}
 { A \lor B,\Gamma \seq \Delta} \ \ 
\infer[\OrlII]{B,\Gamma \seq \Delta}
 { A \lor B,\Gamma \seq \Delta} 
\]
\[
\infer[\negr]{A, \Gamma \seq \Delta}
 {\Gamma \seq \Delta, \neg A} \ \  
\infer[\negl]{\Gamma \seq \Delta,A}
 { \neg A, \Gamma \seq \Delta}
\]
The resolution rule, where $\sigma$ is an m.g.u. of $\{A_1,\ldots,A_k,B_1,\ldots,B_l\}$ and\\ 
$V(\{A_1,\ldots,A_k\}) \intrs V(\{B_1,\ldots,B_l\}) = \emptyset$ is 
\[
\infer[\res]{\Gamma\sigma,\Pi\sigma \seq \Delta\sigma,\Lambda\sigma} 
{\Gamma \seq \Delta,A_1,\ldots,A_k 
  &
  B_1,\ldots,B_m,\Pi \seq \Lambda
}
\]
A $\RPLnull$-derivation is a tree formed from axioms $\seq F$ for $F \in \PLnull$ by application of the rules above.
\end{definition}
In \cite{CLL.2021} $\RPLnull$ is shown to be sound and refutationally complete. 
\begin{examp} \label{ex.runningRefutation}
Let 
$$F = Pa \land (Pf^{2}a \lor \neg Pf^{2}a) \land \neg Pf^{4}a \land (\neg P\alpha \lor Pf^{2}\alpha),$$
$$H = (Pf^{2}a \lor \neg Pf^{2}a) \land \neg Pf^{4}a \land (\neg P\alpha \lor Pf^{2}\alpha),$$
$$G = \neg Pf^{4}a \land (\neg P\alpha \lor Pf^{2}\alpha).$$
We obtain the $\RPLnull$-refutation $\rho = $
\begin{prooftree}
	\AxiomC{$(\rho_1)$}
	\noLine
	\UnaryInfC{$Pf^2a \vdash $}
	
	\AxiomC{$\vdash F$}
	\RightLabel{$\land_{r_2}$}
	\UnaryInfC{$\vdash H$}
	\RightLabel{$\land_{r_2}$}
	\UnaryInfC{$\vdash G$}
	\RightLabel{$\land_{r_2}$}
	\UnaryInfC{$\vdash \neg P\alpha \lor Pf^{2}\alpha$}
	\RightLabel{$\lor_{r}$}
	\UnaryInfC{$\vdash \neg P\alpha, Pf^{2}\alpha$}
	\RightLabel{$\neg_r$}
	\UnaryInfC{$P\alpha \vdash Pf^{2}\alpha$}
	
	\RightLabel{$res \{\alpha \leftarrow a\}$}
	\BinaryInfC{$Pa \vdash $}	
	
	\AxiomC{$\vdash F$}
	\RightLabel{$\land_{r_1}$}
	\UnaryInfC{$\vdash Pa$}
	\RightLabel{$res$}
	\BinaryInfC{$\vdash$}
\end{prooftree}
where $\rho_1 = $
\begin{prooftree}
	\AxiomC{$\vdash F$}
	\RightLabel{$\land_{r_2}$}
	\UnaryInfC{$\vdash H$}
	\RightLabel{$\land_{r_2}$}
	\UnaryInfC{$\vdash G$}
	\RightLabel{$\land_{r_1}$}
	\UnaryInfC{$\vdash \neg Pf^{4}a$}
	\RightLabel{$\neg_r$}
	\UnaryInfC{$Pf^{4}a \vdash$}
	\AxiomC{$\vdash F$}
	\RightLabel{$\land_{r_2}$}
	\UnaryInfC{$\vdash H$}
	\RightLabel{$\land_{r_2}$}
	\UnaryInfC{$\vdash G$}
	\RightLabel{$\land_{r_2}$}
	\UnaryInfC{$\vdash \neg P\alpha \lor Pf^{2}\alpha$}
	\RightLabel{$\lor_{r}$}
	\UnaryInfC{$\vdash \neg P\alpha, Pf^{2}\alpha$}
	\RightLabel{$\neg_r$}
	\UnaryInfC{$P\alpha \vdash Pf^{2}\alpha$}
	\RightLabel{$res \{\alpha \leftarrow f^2a\}$}
	\BinaryInfC{$Pf^2a \vdash $}
\end{prooftree}
The set of unifiers in $\rho$ is $\{\{\alpha \leftarrow a\}, \{\alpha \leftarrow f^2a\}\}$.
\end{examp}
As shown in the example above, several resolution rules may occur in a $\RPLnull$-derivation, and hence several most general unifiers $\sigma_i$ need to be applied. A total unifier (or total m.g.u.) can be obtained by considering the most general simultaneous unifier of the unification problems given by the atoms in the premises of all resolution rules, after regularizing the derivation by renaming variables.
\begin{examp} \label{ex.runningRefutationRegular}
Let $\rho$ be the $\RPLnull$-derivation in Example \ref{ex.runningRefutation}, then $\rho_r =$
\begin{prooftree}
	\AxiomC{$(\rho_{r_1})$}
	\noLine
	\UnaryInfC{$Pf^2a \vdash $}
	
	\AxiomC{$\vdash F\{\alpha \leftarrow \beta\}$}
	\RightLabel{$\land_{r_2}$}
	\UnaryInfC{$\vdash H\{\alpha \leftarrow \beta\}$}
	\RightLabel{$\land_{r_2}$}
	\UnaryInfC{$\vdash G\{\alpha \leftarrow \beta\}$}
	\RightLabel{$\land_{r_2}$}
	\UnaryInfC{$\vdash \neg P\beta \lor Pf^{2}\beta$}
	\RightLabel{$\lor_{r}$}
	\UnaryInfC{$\vdash \neg P\beta, Pf^{2}\beta$}
	\RightLabel{$\neg_r$}
	\UnaryInfC{$P\beta \vdash Pf^{2}\beta$}
	
	\RightLabel{$res \{\beta \leftarrow a\}$}
	\BinaryInfC{$Pa \vdash $}	
	
	\AxiomC{$\vdash F$}
	\RightLabel{$\land_{r_1}$}
	\UnaryInfC{$\vdash Pa$}
	\RightLabel{$res$}
	\BinaryInfC{$\vdash$}
\end{prooftree}
where $\rho_{r_1} = $
\begin{prooftree}
	\AxiomC{$\vdash F\{\alpha \leftarrow \delta\}$}
	\RightLabel{$\land_{r_2}$}
	\UnaryInfC{$\vdash H\{\alpha \leftarrow \delta\}$}
	\RightLabel{$\land_{r_2}$}
	\UnaryInfC{$\vdash G\{\alpha \leftarrow \delta\}$}
	\RightLabel{$\land_{r_1}$}
	\UnaryInfC{$\vdash \neg Pf^{4}a$}
	\RightLabel{$\neg_r$}
	\UnaryInfC{$Pf^{4}a \vdash$}
	\AxiomC{$\vdash F\{\alpha \leftarrow \gamma\}$}
	\RightLabel{$\land_{r_2}$}
	\UnaryInfC{$\vdash H\{\alpha \leftarrow \gamma\}$}
	\RightLabel{$\land_{r_2}$}
	\UnaryInfC{$\vdash G\{\alpha \leftarrow \gamma\}$}
	\RightLabel{$\land_{r_2}$}
	\UnaryInfC{$\vdash \neg P\gamma \lor Pf^{2}\gamma$}
	\RightLabel{$\lor_{r}$}
	\UnaryInfC{$\vdash \neg P\gamma, Pf^{2}\gamma$}
	\RightLabel{$\neg_r$}
	\UnaryInfC{$P\gamma \vdash Pf^{2}\gamma$}
	\RightLabel{$res \{\gamma \leftarrow f^2a\}$}
	\BinaryInfC{$Pf^2a \vdash $}
\end{prooftree}
is the $\RPLnull$-derivation obtained by regularizing $\rho$.
The set of substitutions in $\rho_r$ is $\{\{\beta \leftarrow a\}, \{\gamma \leftarrow f^2a\}\}$.
\end{examp}
\begin{definition}[simultaneous unifier]
Let $W = (\Acal_1, \ldots, \Acal_n)$, where the $\Acal_i$ are nonempty sets of atoms for $i = 1, \ldots, n$. A substitution $\sigma$ is called a simultaneous unifier of $W$ if $\sigma$ unifies all $\Acal_i$. $\sigma$ is called a most general simultaneous unifier of $W$ if $\sigma$ is a simultaneous unifier of $W$ and $\sigma \leq_s \sigma'$ for all simultaneous unifiers $\sigma'$ of $W$.
\end{definition}
\begin{definition}[total m.g.u.]
Let $\rho_r$ be a regular $\RPLnull$-derivation containing $n$ resolution inferences. The unification problem of $\rho_r$ is defined as $W = (\Acal_1, \ldots, \Acal_n)$, where $\Acal_i$ ($i \in \{1, \ldots n\}$) is the set $\{A_1^i,\ldots,A_{k_i}^i,B_1^i,\ldots,B_{m_i}^i\}$ in the resolution inferences 
\[
\infer[\res]{\Gamma\sigma_i,\Pi\sigma_i \seq \Delta\sigma_i,\Lambda\sigma_i} 
{\Gamma \seq \Delta,A_1^i,\ldots,A_{k_i}^i 
  &
  B_1^i,\ldots,B_{m_i}^i,\Pi \seq \Lambda
}
\]
in $\rho_r$. If $\sigma$ is a most general simultaneous unifier of $W$, $\sigma$ is called a total m.g.u. of $\rho_r$.
\end{definition}
\begin{examp}
The total m.g.u. of $\rho_r$ in Example \ref{ex.runningRefutationRegular} is $\{\{\beta \leftarrow a\}, \{\gamma \leftarrow f^2a\}\}$.
\end{examp}

\section{Refutation Schemata}\label{sec.refschemata}

In this section we will extend $\RPLnull$ by rules handling schematic formula definitions. In inductive proofs the use of lemmas is vital, i.e. an ordinary refutational calculus, which has just a weak capacity of lemma generation, may fail to derive the desired invariant. Therefore, we will add introduction rules for the connectives, giving us the potential to derive more complex formulas. Furthermore, we have to ensure that the formulas on which the resolution rule is applied have pairwise disjoint variables. We need a corresponding concept of disjointness for the schematic case.
\begin{definition}
Let $A, B$ be finite sets of schematic variables. $A$ and $B$ are called essentially disjoint if for all $\sigma \in \Scal$ $\sigma(A)\Eval  \cap \sigma(B)\Eval = \emptyset$. 
\end{definition}
\begin{definition}[$\RPLnull^\Psi$]\label{def.RPLnullpsi}
Let $\Psi\colon ((\qhat, \Pcal, D(\Pcal), \vec{Y}, <, T', T'')$ be a theory, then for all schematic predicate symbols $\phat \in \Pcal$ for 
$D(\phat)(\vec{Y}, \vec{n},0) = \phat_B,$ and $D(\phat)(\vec{Y}, \vec{n},s(m)) = \phat_S \{\xi \ass \phat(\vec{Y},\vec{n},p(m))\}$, 
we define the elimination of defined symbols
\[
\infer[B\phat r]{\Gamma \vdash \Delta, \phat_B}
  {\Gamma \vdash \Delta, \phat(\vec{Y},\vec{n},0)
  }
\ \ 
\infer[S\phat r]{\Gamma \vdash \Delta,\phat_S\{\xi \ass \phat(\vec{Y},\vec{n},m)\} }
 { \Gamma \vdash \Delta,\phat(\vec{Y},\vec{n},s(m))
 }
\]
\[
\infer[B\phat l]{\phat_B, \Gamma \vdash \Delta}
  {  \phat(\vec{Y},\vec{n},0),\Gamma \vdash \Delta
  }
\ \ 
\infer[S\phat l]{\phat_S\{\xi \ass \phat(\vec{Y},\vec{n},m)\}, \Gamma \vdash \Delta}
 {\phat(\vec{Y},\vec{n},s(m)) ,\Gamma \vdash \Delta
 }
\]
and the introduction of defined symbols
\[
\infer[B\phat r^+]{\Gamma \vdash \Delta, \phat(\vec{Y},\vec{n},0)}
  {\Gamma \vdash \Delta,\phat_B}
\ \ 
\infer[S\phat r^+] { \Gamma \vdash \Delta,\phat(\vec{Y},\vec{n},s(m))}
 {\Gamma \vdash \Delta,\phat_S\{\xi \ass \phat(\vec{Y},\vec{n},m)\} }
\]
\[
\infer[B\phat l^+]{  \phat(\vec{Y},\vec{n},0),\Gamma \vdash \Delta} 
{\phat_B, \Gamma \vdash \Delta} 
\ \ 
\infer[S\Phat l^+] {\phat(\vec{Y},\vec{n},s(m)) ,\Gamma \vdash \Delta} 
{\phat_S\{\xi \ass \phat(\vec{Y},\vec{n},m)\}, \Gamma \vdash \Delta} 
\]
As the theory $\Psi$ contains the $\omega \iota$-theory $T'$ and, due to the defining equations for schematic term symbols, $T'$ is an equational theory we also need inference rules {\em within} schematic formulas. Let $t$ be a term in $T'$ of the form $(1)\colon \shat(s_1,\ldots,s_i,t_1,\ldots,t_{j-1},s(w))$ or of the form  $(2)\colon \shat(s_1,\ldots,s_i,t_1,\ldots,t_{j-1},\bar{0})$ where $D(\shat)=$
\[
\begin{array}{l}
\{\shat(x_1,\ldots,x_i,n_1,\ldots,n_{j-1},s(n_j)) = \shat_S\{z \ass \shat(x_1,\ldots,x_i,n_1,\ldots,n_{j-1},n_j)\},\\
 \shat(x_1,\ldots,x_i,n_1,\ldots,n_{j-1},\bar{0}) =\shat_B\}
\end{array}
\]
Then, in case (1) we define $t \sim_{T'} t'$ for 
$$t' = \shat_S\theta\{z \ass \shat(s_1,\ldots,s_i,t_1,\ldots,t_{j-1},w)\},$$
where $\theta = \{x_1 \ass s_1,\ldots,x_i \ass s_i, n_1 \ass t_1,\ldots,n_{j-1} \ass t_{j-1},n_j \ass w\}$. \\[1ex]
In case (2) we define $t \sim_{T'} t'$ for 
$$t' = \shat_B\{x_1 \ass s_1,\ldots,x_i \ass s_i, n_1 \ass t_1,\ldots,n_{j-1} \ass t_{j-1}\}.$$
We also define $\sim_{T'}$ as the reflexive and symmetric closure of the relation defined above. Now let $F$ be a schematic formula occurring in $\Psi$ such that $t$ occurs at position $\lambda$ in $F$ and $t \sim_{T'} t'$. Then $F[t']_\lambda$ is called a {\em $T'$-variant of $F$}. Now let 
$S\colon F_1,\ldots,F_i \seq G_1,\ldots,G_j$ be a sequent, the $F'_\alpha$ be $T'$-variants of $F_\alpha$ and the $G'_\beta$ be $T'$-variants of $G_\beta$. Then we define the inference 
\[
\infer[T']{F'_1,\ldots,F'_i \seq G'_1,\ldots,G'_j}
  {   F_1,\ldots,F_i \seq G_1,\ldots,G_j
  }
\]
 
We also adapt the resolution rule to the schematic case: \\
Let $T^\iota_V(\{A_1,\ldots,A_\alpha\}), T^\iota_V(\{B_1,\ldots,B_\beta\})$ be essentially disjoint sets of schematic variables and $\Theta$ be an s-unifier of $\{A_1,\ldots,A_\alpha,B_1,\ldots,B_\beta\}$. Then the resolution rule is defined as  
\[
\infer[res\{\Theta\}]{\Gamma\Theta,\Pi\Theta \vdash \Delta\Theta,\Lambda\Theta} 
{\Gamma \vdash \Delta,A_1,\ldots,A_\alpha
  &
  B_1,\ldots,B_\beta,\Pi \vdash \Lambda
}
\]
\end{definition}
The refutational completeness of $\RPLnull^\Psi$ is not an issue as already $\RPLnull$ is refutationally complete for $\PLnull$ formulas \cite{VAravantinos2013,LPW17}. Note that this is not the case anymore if parameters occur in formulas. Indeed, due to the usual theoretical limitations, the logic is not semi-decidable for schematic formulas~\cite{VAravantinos2011}. $\RPLnull^\Psi$ is sound.
\begin{proposition}\label{prop.RPLnull-sound}
Let the sequent $S$ be derivable in $\RPLnull^\Psi$ for $\Psi\colon (\qhat, \Pcal, D(\Pcal), \vec{Y}, <), T', T'')$. Then $D(\Pcal) \models S$.
\end{proposition}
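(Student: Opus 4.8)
The plan is to prove the proposition by induction on the structure (equivalently, the height) of the $\RPLnull^\Psi$-derivation of $S$, after fixing a precise reading of the entailment. I interpret a schematic sequent $\Gamma \vdash \Delta$ semantically through its evaluations: for $\sigma \in \Scal$ and a structure $M$, write $M,\sigma \models \Gamma \vdash \Delta$ if $M$ satisfies the first-order formula $\bigwedge \sigma(\Gamma)\Eval \to \bigvee \sigma(\Delta)\Eval$. Then $D(\Pcal) \models S$ is read as: every $M$ validating the defining equations $D(\Pcal)$ together with the axiom sequents occurring as leaves of the derivation satisfies $M,\sigma \models S$ for all $\sigma \in \Scal$. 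With this reading the base case is immediate, since an axiom is a sequent $\vdash F$ with $F \in \PLnull$, carrying no schematic predicate symbols and no parameters, so $\sigma(F)\Eval = F$ and the leaf holds in any model of the axiom set.

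For the inductive step I would treat the three groups of rules separately. The propositional elimination and introduction rules for $\land,\lor,\neg$ are classically sound in the usual sequent reading: each conclusion, read as an implication, is a first-order consequence of its premise(s) — for instance $A \land B \models A$ justifies $\AndrI$, while $A \lor B$ is equivalent to the split succedent of the $\lor$-right rule and $\neg A$ in the succedent to $A$ in the antecedent — so they preserve $M,\sigma \models \cdot$ for every $\sigma$ without any appeal to $D(\Pcal)$. The rules handling defined symbols are exactly where the hypothesis is used: the equations $\phat(\vec{Y},\vec{n},0) = \phat_B$ and $\phat(\vec{Y},\vec{n},s(m)) = \phat_S\{\xi \ass \phat(\vec{Y},\vec{n},m)\}$ in $D(\phat)$ make $\phat$ and its one-step unfolding logically equivalent in every model of $D(\Pcal)$; hence both the elimination rules ($B\phat r$, $S\phat r$, and their left variants) and the corresponding introduction rules ($B\phat r^{+}$, $S\phat r^{+}$, etc.) merely replace a formula by a $D(\Pcal)$-equivalent one and so preserve satisfaction in every model of $D(\Pcal)$.

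The main obstacle is the resolution rule, where I must pass from the schematic $s$-unifier $\Theta$ to an ordinary first-order cut. Fix $M \models D(\Pcal)$ and $\sigma \in \Scal$, and let $p$ be the state with $\sigma \in \Scal(p)$, so that $\Gamma\Theta$ is read as $\Ap(\Theta,\Gamma)|_p$. First I would invoke the essential disjointness of $T^\iota_V(\{A_1,\ldots,A_\alpha\})$ and $T^\iota_V(\{B_1,\ldots,B_\beta\})$ to conclude that the evaluated premises have disjoint variable sets, so that binary resolution is applicable. Next, Theorem~\ref{the.s-subst-sound-on-Tiotahat} (resting on Theorem~\ref{the.soundness-appl}) gives $\sigma(\Ap(\Theta,\Gamma)|_p)\Eval = \sigma(\Gamma)\Eval\,\Theta[\sigma]$, and likewise for $\Pi,\Delta,\Lambda$, so the evaluated conclusion is precisely the $\Theta[\sigma]$-instance of the evaluated premises; Proposition~\ref{prop.s-subst} guarantees that $\Theta[\sigma]$ is a genuine first-order substitution, and since $\Theta$ is an $s$-unifier, Definition~\ref{def.s-unif} ensures that $\Theta[\sigma]$ unifies all $\sigma(A_i)\Eval$ and $\sigma(B_j)\Eval$ to a common atom $C$. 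The step then reduces, for each $\sigma$, to the classical soundness of the cut on $C$: if $M$ validates both evaluated premise-instances it validates the resolvent-instance. Assembling the three cases closes the induction and yields $D(\Pcal) \models S$.
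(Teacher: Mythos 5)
Your proposal is correct and follows essentially the same route as the paper: the paper's own proof is a one-sentence assertion that the defined-symbol rules and the resolution rule are sound with respect to $D(\Pcal)$, and you carry out exactly that rule-by-rule soundness check as an explicit induction on the derivation, correctly locating where $D(\Pcal)$ is needed (the unfolding rules) and where the substitution-soundness results (Theorem~\ref{the.s-subst-sound-on-Tiotahat}, Proposition~\ref{prop.s-subst}) reduce the schematic resolution step to an instantiated first-order cut. The only caveat is that your reading of $D(\Pcal) \models S$ as entailment relative to the leaf sequents is an interpolation the paper leaves implicit, but it is the only reading under which the statement is true, so making it explicit is a strength rather than a deviation.
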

\begin{proof}
The introduction and elimination rules for defined predicate symbols are sound (we have to consider the equations $D(\Pcal)$), as are the $T'$ rules; also the resolution rule (involving s-unification ) is sound.
\end{proof} 
Before giving a formal definition of refutation schemata let us have a look at the following example. The formula schema below is a characteristic formula schema of a proof schema $\Phi$ defined in~\cite{Thesis.Lolic.2020}; thus its refutation is a key step in the analysis of $\Phi$ via the schematic CERES method. 
\begin{examp}\label{ex.proofschema}
We define a refutation schema for the theory 
$$\Psi : ((\qhat, \Pcal, D(\Pcal)), \vec{Y}, <), T', T''),$$ 
$\Pcal : \{\phat, \qhat, \fhat\}$, where $\vec{Y} = (X,Y,Z)$ and
\begin{eqnarray*}
D(\phat) = \{\phat(X,s(n)) & = & \phat(X,n) \lor \neg P(X(s(n)),\fhat(a,s(n))), \\
 \phat(X,\overline{0}) & = & \neg P(X(\overline{0}),\fhat(a,\overline{0}))\}
\end{eqnarray*}
$$ D(\qhat) = \{\qhat(X,Y,Z,n,m) = P(\fhat(Y(n),m),Z(n)) \land \phat(X,n)\}. $$
We extend the calculus $\RPLnull^\Psi$ by two rules involving link-variables. We create link variables 
$V_{\phat}(X,r)$ corresponding to the formulas $\phat(X,r)$ for $r \in \{\bar{0},n,p(n)\}$ and add rules for the elimination and the introduction of the variable $V_{\phat}(X,r)$:
\[
\infer[V_{\phat}E]{\vdash \phat(X,r)}{V_{\phat}(X,r)} \ \ \infer[V_{\phat}I]{V_{\phat}(X,r)}{\vdash \phat(X,r)}
\]
The idea is to use the link variables to define recursive proofs. We start with a proof which is defined for $n>\bar{0}$ as $\rho(X,Y,Z,n,m,V_{\phat}(X,n)) =$
\[
\infer[V_{\phat}I]{V_{\phat}(X,p(n))}
 { \infer[R(\Theta(n,m))]{\vdash \phat(X,p(n))}
    { \infer{P(X(n),\fhat(a,n)) \seq \phat(X,p(n))}
         { \infer{\seq \phat(X,p(n)), \neg P(X(n),\fhat(a,n))}
            { \infer{\seq \phat(X,p(n)) \lor \neg P(X(n),\fhat(a,n))}
               { \infer[V_{\phat}E]{\seq \phat(X,n)}{V_{\phat}(X,n)}
               }
             }
          }
    &
       \seq P(\fhat(Y(n),m),Z(n))
      }
}
\]
where $\Theta(n,m) = \{X(n) \leftarrow \fhat(Y(n),m), Z(n) \leftarrow \fhat(a,n)\}$ is an s-substitution.
The idea is now to append the proof above to itself until we arrive at the sequent $\vdash \phat(X,\bar{0})$. We achieve this by the following recursive definition (where $\rhohat$ is a recursive proof symbol).
\[
\begin{array}{l}
\rhohat(X,Y,Z,n,m,V_{\phat}(X,n)) =\ \If\ n=\bar{0}\ \Then\ \seq \phat(X,\bar{0})\\
\mbox{   }\Else\ \rho(X,Y,Z,n,m,V_{\phat}(X,n)) \circ \rhohat(X,Y,Z,p(n),m,V_{\phat}(X,p(n))).
\end{array}
\]
In carrying out the composition $\rho(X,Y,Z,n,m,V_{\phat}(X,n)) \circ \rhohat(X,Y,Z,p(n),m,V_{\phat}(X,p(n)))$ we identify the last sequent of $\rho(X,Y,Z,n,m,V_{\phat}(X,n))$ with the uppermost leaf 
$\seq V_{\phat}(X,p(n))$ in $\rhohat(X,Y,Z,p(n),m,V_{\phat}(X,p(n)))$ provided $p(n)>\bar{0}$, otherwise we end up with the end-sequent $\seq \phat(X,\bar{0})$. Still we do not have a proof with the right axioms, as - for $n > \bar{0}$ one axiom in 
$\rhohat(X,Y,Z,n,m,V_{\phat}(X,n))$ is $\seq V_{\phat}(X,n)$. We only have to apply the substitution 
$\{V_{\phat}(X,n) \ass \ \vdash \phat(X,n)$ to achieve the proof 
$$\rhohat(X,Y,Z,n,m,\phat(X,n))$$
which is a proof of $\seq \phat(X,\bar{0})$ from the axioms $\seq \phat(X,n)$ and $\seq P(\fhat(Y(k),m),Z(k))$ for $k \leq n$. As both $\seq \phat(X,n)$ and the sequents $\seq P(\fhat(Y(k),m),Z(k))$ are derivable from $\vdash \qhat(X,Y,Z,n,m)$ (we also use variable renaming) the following derivation below is a refutation of 
$\seq \qhat(X,Y,Z,n,m)$:
$\rho_0(X,Y,Z,n,m)=$
\[
\infer[R(\Theta(\bar{0},m))]{\seq}
 { \infer{P(X(\bar{0}),f(a,\bar{0})) \seq}
    { \infer{ \seq \neg P(X(\bar{0}),\fhat(a,\bar{0}))}
      {\deduce{\vdash \phat(X,\bar{0})}{\rhohat(X,Y,Z,n,m,\phat(X,n))}
       }
    }
   & 
   \seq P(\fhat(Y(0),m),Z(0))
 }
\]
where $\Theta(\bar{0},m) = \{X(\bar{0}) \leftarrow \fhat(Y(0),m), Z(0) \leftarrow \fhat(a,\bar{0})\}$ is an s-substitution.
We can also explicitly insert the missing proofs from $\seq \qhat(X,Y,Z,n,m)$:\\[1ex]
To obtain  a derivation of the leaf $\seq \phat(X,n)$ we just define 
$$\rhohat(X,Y,Z,n,m,V_{\phat}(X,n))\{V_{\phat}(X,n) \ass \rho'\}$$
for $\rho'=$
\[
\infer{V_{\phat}(X,n)}
{\infer{\seq \phat(X,n)}
  { \infer{\seq P(\fhat(Y(n),m),Z(n)) \land \phat(X,n)}
     { \seq \qhat(X,Y,Z,n,m)
      }
  }
}
\]
The other proof is just 
\[
\infer{\seq P(\fhat(Y(0),m),Z(0)) }
  { \infer{\seq P(\fhat(Y(0),m),Z(0)) \land \phat(X,n)}
     { \seq \qhat(X,Y,Z,n,m)
      }
  }
\]
\end{examp}

\begin{definition}[link-variables]\label{def.proofV}
Let $\phat$ be a schematic predicate symbol. To $\phat$ we assign an infinite set of {\em link variables} $V(\phat) = \{V_i \mid i \in \N\}$; for different schematic predicate symbols the link variables are disjoint. If $V \in V(\phat)$ we also say that $V$ is of type 
$\phat$. Let $\phat(\vec{X},\vec{r})$ be a schematic atom defined via $\phat$. Then, for every $V \in V(\phat)$, the expression $V(\vec{X},\vec{r})$ is called a {\em link expression} corresponding to $\phat$; we also write $V_{\phat}(\vec{X},\vec{r})$ for this link expression to emphasize that $V$ is in $V(\phat)$.    
Two link expressions $V_{\phat}(\vec{X},\vec{r})$ and $U_{\qhat}(\vec{Y},\vec{s})$ are defined as equal if $U=V$, $\vec{X} = \vec{Y}$ and $\vec{r} = \vec{s}$. 
\end{definition}
Link variables $V$ serve the purpose to define locations in a proof where $V$ can be replaced by a proof; these locations  can be either the leaves of a proof or the root. In order to place into or to  remove variables from proofs we extend the  
$\RPLnull^\Psi$-calculus by variable elimination rules  and variable introduction rules.
\begin{definition}[$\RPLnull^\Psi V$]\label{def.extended-RPLnull-Psi}
The calculus $\RPLnull^\Psi V$ contains the rules of $\RPLnull^\Psi$ with two additional rules. Let $V$ be a variable of type 
$\phat$ then we define the rules 
\[
\infer[V_I]{V(\vec{X},\vec{r})}{\seq \phat(\vec{X},\vec{r})}  \ \ \infer[V_E]{\seq \phat(\vec{X},\vec{r})}{V(\vec{X},\vec{r})} 
\]
Elimination rules can only be applied to leaves in an $\RPLnull^\Psi V$ -derivation (if the leaf is a link expression) and introduction rules to root nodes which are labeled by a sequent of the form $\seq \phat(\vec{X},\vec{r})$. Or expressed in another way: any 
$\RPLnull^\Psi V$-derivation can be obtained from an $\RPLnull^\Psi$- derivation $\rho$ by appending variable elimination rules on some leaves of $\rho$ and (possibly) a variable introduction rule on the root (provided the sequents on the nodes are of an appropriate form). 
\end{definition}
\begin{examp}\label{RPLnull-zero-V}
The proof $\rho(X,Y,Z,n,m,V_{\phat}(X,n))$ for $n>\bar{0}$ in Example~\ref{ex.proofschema} is a $\RPLnull^\Psi V$-derivation.
\end{examp}
The link variables in an $\RPLnull^\Psi V$-derivation can be replaced by other $\RPLnull^\Psi V$-derivations:
\begin{definition}[proof composition]\label{def.proof-substitution}
Let $\rho_1$ be a $\RPLnull^\Psi V$-derivation with a root node $V_{\phat}(\vec{X},\vec{r})$ and let $\rho_2$ be a    
$\RPLnull^\Psi V$-derivation with (possibly several) leaf nodes $V_{\phat}(\vec{X},\vec{r})$ appearing at the set of positions 
$\Lambda$. Then  the composition of $\rho_1$ and 
$\rho_2$, denoted as $\rho_1 \circ \rho_2$, is defined as $\rho_2[\rho'_1]_\Lambda$ where $\rho'_1$ is the derivation of 
$\seq \phat(\vec{X},\vec{r})$, the premise of $V_{\phat}(\vec{X},\vec{r})$ (note that the last rule in $\rho_1$ is the variable introduction rule for  $V_{\phat}(\vec{X},\vec{r})$). $\rho_1$ and $\rho_2$ are called {\em composable} if 
there exists a proof variable $V$ which is the root node of $\rho_1$ and a leaf node of $\rho_2$.
\end{definition}
We did not write $\rho_2\{V_{\phat}(\vec{X},\vec{r}) \ass \rho'_1\}$ for $\rho_1 \circ \rho_2$ because we do not exclude that $V_{\phat}(\vec{X},\vec{r})$ is also the root node of $\rho_2$.
\begin{examp}\label{ex.proof-composition}
Let $\rho_2$ be the proof 
\[
\infer[V_{\phat}I]{V_{\phat}(X,p(n))}
 { \infer[R(\Theta(n,m))]{ \phat(X,p(n))}
    { \infer{P(X(n),\fhat(a,n)) \seq \phat(X,p(n))}
         { \infer{\seq \phat(X,p(n)), \neg P(X(n),\fhat(a,n))}
            { \infer{\seq \phat(X,p(n)) \lor \neg P(X(n),\fhat(a,n))}
               { \infer[V_{\phat}E]{\seq \phat(X,n)}{V_{\phat}(X,n)}
               }
             }
          }
    &
       \seq P(\fhat(Y(n),m),Z(n))
      }
}
\]
and $\rho_1$ be 
\[
\infer{V_{\phat}(X,n)}
{\infer{\seq \phat(X,n)}
  { \infer{\seq P(\fhat(Y(n),m),Z(n)) \land \phat(X,n)}
     { \seq \qhat(X,Y,Z,n,m)
      }
  }
}
\]
Then $\rho_1\circ \rho_2$ =
\[
\infer[V_{\phat}I]{V_{\phat}(X,p(n))}
 { \infer[R(\Theta(n,m))]{ \phat(X,p(n))}
    { \infer{P(X(n),\fhat(a,n)) \seq \phat(X,p(n))}
         { \infer{\seq \phat(X,p(n)), \neg P(X(n),\fhat(a,n))}
            { \infer{\seq \phat(X,p(n)) \lor \neg P(X(n),\fhat(a,n))}
               { \infer{\seq \phat(X,n)}
                  { \infer{\seq P(\fhat(Y(n),m),Z(n)) \land \phat(X,n)}
                    { \seq \qhat(X,Y,Z,n,m)
                   }
               }
               }
             }
          }
    &
       \seq P(\fhat(Y(n),m),Z(n))
      }
}
\]
\end{examp}
\begin{definition}[proof recursion]\label{def.proofrec}
Let $\rho(\bar{X},\vec{n},V_{\phat}(\vec{Y},\vec{m},k))$ be a proof with a leaf $V_{\phat}(\vec{Y},\vec{m},k)$ (occurring exactly once) and with the root $V_{\phat}(\vec{Y},\vec{m},p(k))$, where $\vec{Y}$ is a subvector of $\vec{X}$ and $(\vec{m},k)$ of $\vec{n}$ (if this is the case we say that $\rho$ admits proof recursion). We abbreviate 
$V_{\phat}(\vec{Y},\vec{m},k)$ by $V(k)$ and define 
\[
\begin{array}{l}
\If\ k= \bar{0}\ \Then\ \rhohat((\bar{X},\vec{n},V(k)) = V(\bar{0})\\[1ex]
\Else\ \rhohat((\bar{X},\vec{n},V(k)) =
\rho(\bar{X},\vec{n},V(k)) \circ \rhohat((\bar{X},\vec{n}\{k \ass p(k)\},V(p(k)))).
\end{array}
\] 
Note that from $V(\bar{0})$ we can finally derive $\phat(\vec{Y},\vec{m},\bar{0})$. We say that $\rhohat$ is the {\em inductive closure} of $\rho$.
\end{definition}
\begin{examp}\label{ex.inductive-closure}
Take $\rho(X,Y,Z,n,m,V_{\phat}(X,n))$ and $\rhohat(X,Y,Z,n,m,V_{\phat}(X,n))$ from Example~\ref{ex.proofschema}. Then $\rhohat$ is the inductive closure of $\rho$.
\end{examp}
Note that $\rhohat$ is obtained from $\rho$ by a kind of primitive recursion on proofs.
\begin{definition}[proof schema]\label{def.proof-schema}
We define proof schema inductively:
\begin{itemize}
\item Any $\RPLnull^\Psi V$-derivation is a proof schema.
\item If $\rho_1$ and $\rho_2$ are proof schemata and $\rho_1,\rho_2$ are composable then $\rho_1 \circ \rho_2$ is a proof schema.
\item If $\rho$ is a proof schema which admits proof recursion then the inductive closure of $\rho$ is a proof schema.
\item Let $\rho_1(\vec{X_1},\vec{n}),\ldots,\rho_\alpha(\vec{X}_\alpha,\vec{n})$ (for $\alpha >0$) be proof schemata over the parameter tuple $\vec{n}$  and let $\{C_1,\ldots,C_\alpha\}$ be conditions on the parameters in $\vec{n}$ which define a partition then 
\[
\begin{array}{l}
\If\ C_1 \ \Then\   \rho_1(\vec{X_1},\vec{n})\ \Else\\
\mbox{ } \If\  C_2 \ \Then\   \rho_2(\vec{X_2},\vec{n})\ \Else\\
\ldots\\
\mbox{   }\ \  \If\  C_{\alpha-1} \ \Then\   \rho_{\alpha-1}(\vec{X_{\alpha-1}},\vec{n})\ \Else\ \rho_\alpha(\vec{X}_\alpha,\vec{n})
\end{array}
\]
is a proof schema.
\end{itemize}
\end{definition}
\begin{examp}\label{ex.proofschema-2}
Consider the proofs $\rho$, $\rhohat$ and $\rho_0$ in Example~\ref{ex.proofschema}. All of them are proof  schemata; the proof schema $\rho_0$ is also a {\em refutation schema} of $\qhat(X,Y,Z,n,m)$, a concept which will be formally defined below.
\end{examp}
Our schematic proofs are proofs from sequents of the form $\seq F$, which we call schematic $F$-proofs. A schematic $F$-proof of $\seq$ is called a refutation schema of $F$. 
\begin{definition}[schematic $F$-proofs]\label{def.refutation-schema}
Let $F\colon \qhat(\vec{X},\vec{n})$ be the main schematic atom in a schematic definition $\Psi$. We define schematic $F$-proofs below. The $\RPLnull^\Psi V$-proof
\[
 \seq \qhat(\vec{X},\vec{n})
\]
is a schematic $F$-proof. 
\begin{itemize}
\item If $\rho_1$ and $\rho_2$ are schematic $F$-proofs of $S_1$ and $S_2$ from $\seq \qhat(\vec{X},\vec{n})$ and $\rho=$
\[
\infer[\xi]{S}{\deduce{S_1}{(\rho_1)}
                           &
                         \deduce{S_2}{(\rho_2)}
                        }
\]
for a binary rule $\xi$ then $\rho$ is a schematic $F$-proof of $S$.
\item If $\rho'$ is a schematic $F$-proof of $S'$ and $\rho=$
\[
\infer[\xi]{S}{\deduce{S'}{(\rho')}}
\]
for a unary rule $\xi$ then $\rho$ is a schematic $F$-proof of $S$.
\item Let $\rho_1$ be a schematic $F$-proof of $S$ where $S$ is of the form $V_{\phat}(\vec{Z},\vec{k})$. Let $\rho_2$ be a proof schema with one or several leaves $\lambda: V_{\phat}(\vec{Z},\vec{k})$ such that for all other leaves $\lambda$ of $\rho_2$ there are schematic $F$-proofs of ${\rm seq}(\lambda)$. Then $\rho_1 \circ \rho_2$  is a schematic $F$-proof of the end-sequent of $S_2$.
\item Let $\rho(\bar{X},\vec{n},V_{\phat}(\vec{Y},\vec{m},k))$ be a proof schema with a leaf $V_{\phat}(\vec{Y},\vec{m},k)$ and with the root $V_{\phat}(\vec{Y},\vec{m},p(k))$, where $\vec{Y}$ is a subvector of $\vec{X}$ and $(\vec{m},k)$ of $\vec{n}$ and assume that $\rho$ admits proof recursion, i.e. it is a proof schema of $V_{\phat}(\vec{Y},\vec{m},p(k))$ from $V_{\phat}(\vec{Y},\vec{m},p(k))$. Assume further that $\rhohat$ is defined as ($V(k)$ stands for $V_{\phat}(\vec{Y},\vec{m},k)$)
\[
\begin{array}{l}
\If\ k= \bar{0}\ \Then\ \rhohat((\bar{X},\vec{n},V(k)) = V(\bar{0})\\[1ex]
\Else\ \rhohat((\bar{X},\vec{n},V(k)) =
\rho(\bar{X},\vec{n},V(k)) \circ \rhohat((\bar{X},\vec{n}\{k \ass p(k)\},V(p(k)))).
\end{array}
\] 
If, for $l \leq k$, the proof schema $\rho(\bar{X},\vec{n}\{k \ass l\},V_{\phat}(\vec{Y},\vec{m},l))$ is a schematic $F$-proof (of its end-sequent) then $\rhohat((\bar{X},\vec{n},V(k))$ is a schematic $F$-proof of $V(\bar{0})$. 
\item  Let $\rho_1(\vec{X_1},\vec{n}),\ldots,\rho_\alpha(\vec{X}_\alpha,\vec{n})$ (for $\alpha >0$) be schematic $F$-proofs over the parameter tuple $\vec{n}$ and let $\{C_1,\ldots,C_\alpha\}$ be conditions on the parameters in $\vec{n}$ which define a partition; let $\rho(\vec{X_1},\vec{n})=$
\[
\begin{array}{l}
\If\ C_1 \ \Then\   \rho_1(\vec{X_1},\vec{n})\ \Else\\
\mbox{ } \If\  C_2 \ \Then\   \rho_2(\vec{X_2},\vec{n})\ \Else\\
\ldots\\
\mbox{   }\ \  \If\  C_{\alpha-1} \ \Then\   \rho_{\alpha-1}(\vec{X_{\alpha-1}},\vec{n})\ \Else\ \rho_\alpha(\vec{X}_\alpha,\vec{n})
\end{array}
\]
Then $\rho$ is a schematic $F$-proof of $S$.
\end{itemize}
A schematic $F$-proof of the empty sequent $\seq$ is called a refutation schema of $F$.
\end{definition}
\begin{examp}\label{ex.ref-schema}
The proof schema $\rho_0$ in Example~\ref{ex.proofschema} is a refutation schema of $\qhat(X,Y,Z,n,m)$.
\end{examp}
When the parameters in a refutation schema are instantiated with numerals, we obtain a $\RPLnull^\Psi V$ refutation.
\begin{theorem}\label{the.soundness-ref-schemata}
Let $\rho$ be a refutation schema of a schematic atom $\qhat(\vec{X},n_1,\ldots,n_\alpha)$. Then, for all numerals $\nu_1,\ldots,\nu_\alpha$, the evaluation of $\rho\{n_1 \ass \nu_1,\ldots,n_\alpha \ass \nu_\alpha\}$ is a $\RPLnull^\Psi V$ refutation of $\qhat(\vec{X},\nu_1,\ldots,\nu_\alpha)$.
\end{theorem}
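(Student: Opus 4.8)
The plan is to proceed by induction on the inductive structure of a schematic derivation as laid out in Definition~\ref{def.refutation-schema}. Fix numerals $\nu_1,\ldots,\nu_\alpha$ and write $\sigma = \{n_1 \ass \nu_1,\ldots,n_\alpha \ass \nu_\alpha\}$ for the corresponding parameter assignment in $\Scal$. I will show, more generally, that the instantiation under $\sigma$ of any schematic derivation of a sequent $S$ from $\seq \qhat(\vec{X},\vec{n})$ is (after normalization) a genuine $\RPLnull^\Psi V$-derivation of $\sigma(S)\Eval$ from $\seq \qhat(\vec{X},\vec{\nu})\Eval$, and then specialize to $S = {}\seq$ to obtain the refutation claim. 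For the base case, the one-node derivation $\seq \qhat(\vec{X},\vec{n})$ instantiates to $\seq \qhat(\vec{X},\vec{\nu})$, whose evaluation $\qhat(\vec{X},\vec{\nu})\Eval$ is a concrete $\PLnull$-formula obtained by finitely unfolding $D(\qhat)$ via the elimination rules $B\phat,S\phat$; hence $\seq \qhat(\vec{X},\vec{\nu})\Eval$ is the admissible starting sequent.

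For the unary and binary rule cases I would observe that instantiation commutes with rule application, so it suffices to check that each instantiated inference is a legitimate $\RPLnull^\Psi V$-rule. The propositional elimination and introduction rules and the defined-symbol rules are immediate. The only non-routine inference is resolution: here the s-unifier $\Theta$ labeling $res\{\Theta\}$ evaluates, by Proposition~\ref{prop.s-subst} together with the soundness of application (Theorem~\ref{the.soundness-appl}), to a first-order substitution $\Theta[\sigma]$ that unifies the evaluated resolved atoms, so the instantiated step is a valid first-order resolution inference and essential disjointness of the variable sets is preserved under evaluation. The composition case is immediate since instantiation distributes over $\circ$, and each composed factor is a valid derivation by the induction hypothesis. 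The case-split case is handled by the partition property of $\{C_1,\ldots,C_\alpha\}$: exactly one condition $C_i$ is satisfied by $\sigma$, so the instantiated proof reduces to $\rho_i$ under $\sigma$, which is valid by the induction hypothesis.

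I expect the main obstacle to be the inductive-closure (proof recursion) case, where $\rhohat(\bar{X},\vec{n},V(k))$ is defined by primitive recursion on $k$. Here I would argue by a secondary induction on the numeral $\nu = \sigma(k)$. When $\nu = \bar{0}$ the recursion yields $V(\bar{0})$, from which $\phat(\vec{Y},\vec{m},\bar{0})$ is derivable, and the base equation of $D(\phat)$ supplies the corresponding leaf. When $\nu = s(\nu')$ the recursion unfolds once into $\rho(\bar{X},\vec{n},V(k)) \circ \rhohat(\bar{X},\vec{n}\{k \ass p(k)\},V(p(k)))$; by the secondary induction hypothesis the second factor instantiates to a valid derivation at parameter value $\nu'$, and the composition is valid by the composition case already treated. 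The crux is to verify that this unfolding terminates after exactly $\nu$ iterations (guaranteed because $\nu$ is a concrete numeral, so the recursion is well-founded) and that in each copy of the step proof $\rho$ the s-unifiers are correctly evaluated at the shifted parameter value $p(n)$; this last point again follows from the soundness of s-substitution under evaluation established in Theorem~\ref{the.s-subst-sound-on-Tiotahat}.

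Assembling the cases, the instantiation $\rho\{n_1 \ass \nu_1,\ldots,n_\alpha \ass \nu_\alpha\}$ normalizes to a finite $\RPLnull^\Psi V$-derivation in which every recursion has been fully unfolded and every schematic term, formula, and substitution has been evaluated to its first-order counterpart. Taking the end-sequent to be $\seq$ yields that this instantiation is an $\RPLnull^\Psi V$ refutation of $\qhat(\vec{X},\nu_1,\ldots,\nu_\alpha)\Eval$, as claimed.
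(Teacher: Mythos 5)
Your proposal is correct and follows essentially the same route as the paper's (very terse) proof: instantiate the parameters, unfold the proof recursions, which terminate because they are primitive recursive on concrete numerals, and observe that what remains is a genuine $\RPLnull^\Psi V$-derivation of $\seq$ from instances of $\seq \qhat(\vec{X},\vec{\nu})$. You merely organize this as an explicit structural induction over Definition~\ref{def.refutation-schema} with a secondary induction for the inductive-closure case, which is a faithful elaboration of the one-paragraph argument given in the paper.
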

\begin{proof}
Let $\rho' = \rho\{n_1 \ass \nu_1,\ldots,n_\alpha \ass \nu_\alpha\}$ for numerals $\nu_1,\ldots,\nu_\alpha$. 
If $\rho$ is defined only via proof composition then $\rho'$ is an $\RPLnull^\Psi V$ derivation of $\seq$ from axioms $\vdash \qhat(\vec{X},\nu_1,\ldots,\nu_\alpha)$ and thus an $\RPLnull^\Psi V$ refutation of $\qhat(\vec{X},\nu_1,\ldots,\nu_\alpha)$. 
If $\rho$ is defined via conditions $\{C_1,\ldots,C_\beta\}$ then exactly one of the conditions (let us say $C_i$) holds for the parameter assignment $\{n_1 \ass \nu_1,\ldots,n_\alpha \ass \nu_\alpha\}$. So $\rho' = \rho'_i$ and we may apply the induction hypothesis. 
\\[1ex]
The only nontrivial case occurs when $\rho$ is defined via proof recursion. Here we must proceed by induction on the number of proof recursions $\delta(\rho)$ in $\rho$. If $\delta(\rho) = 0$ we have no inductive closures and we are done. Otherwise assume that 
$\hat{\psi}(\vec{Y},\vec{m},k)$ is a minimal inductive closure in $\rho$ via the active parameter $k$, i.e. $\psi$ itself does not contain proof recursions. Assume that $\hat{\psi}(\vec{Y},\vec{m},k)$ is a schematic $F$-proof of a sequent $S$. We prove by induction that 
$\hat{\psi}(\vec{Y},\vec{m},k)$ evaluates to a derivation in $\RPLnull^\Psi V$. Under the parameter assignment $\sigma\colon \{n_1 \ass \nu_1,\ldots,n_\alpha \ass \nu_\alpha\}$ we obtain $\hat{\psi}(\vec{Y},\vec{\mu},\nu)$ for a vector of numerals $\vec{\mu}$ and a numeral $\nu$; $\hat{\psi}(\vec{Y},\vec{\mu},\nu)$ is a derivation of $\sigma(S)$. If $\nu = \bar{0}$ then by definition of inductive closure and by minimality of $\hat{\psi}$ $\hat{\psi}(\vec{Y},\vec{\mu},\bar{0})$ is a proof without proof recursion and we can argue as above. If $\nu > \bar{0}$ we have 
$$\hat{\psi}(\vec{Y},\vec{\mu},\nu) = \psi(\vec{Y},\vec{\mu},\nu) \circ \hat{\psi}(\vec{Y},\vec{\mu},p(\nu))$$
By induction hypothesis $\hat{\psi}(\vec{Y},\vec{\mu},p(\nu))$ evaluates to a derivation 
$\phi \in \RPLnull^\Psi V$ and so $\hat{\psi}(\vec{Y},\vec{\mu},\nu)$ evaluates to 
$$\psi' = \psi(\vec{Y},\vec{\mu},\nu) \circ \phi.$$
$\psi'$ is a composition of derivation without proof recursions and thus a derivation in $\RPLnull^\Psi V$. Replacing $\hat{\psi}(\vec{Y},\vec{\mu},\nu)$ in $\sigma(\rho)$ by $\psi'$ we obtain a derivation $\rho'$ of $\seq$ from $\qhat(\vec{X},\nu_1,\ldots,\nu_\alpha)$ with $\delta(\rho') < \delta(\rho)$. 
\end{proof}
Using the above formalism for refutation schemata, it is possible to extract a schematic structure representing the Herbrand sequent of the refutation.
\begin{examp}\label{ex.Herbrandschema}
Consider the refutation schema $\rho_0$, as defined in Example~\ref{ex.proofschema}. $\rho_0$ defines the s-substitution $\Theta(\bar{0},m) = \{X(\bar{0}) \leftarrow \fhat(Y(0),m), Z(0) \leftarrow \fhat(a,\bar{0})\}$. Moreover, as $\rho_0$ contains the recursive proof symbol $\hat{\rho}$ as axiom, we have to take the substitutions coming from $\hat{\rho}$ into account as well! 
In fact, $\Theta(\bar{0},m)$ will be applied to all the other s-substitutions coming from the rule applications ``above'', i.e. to all the substitutions in derivations corresponding to the proof variables in the leaves. 
$\hat{\rho}$ is recursively defined over the derivation $\rho$, which defines the s-substitution $\Theta(n,m) = \{X(n) \leftarrow \fhat(Y(n),m), Z(n) \leftarrow \fhat(a,n)\}$.
By construction, as $\hat{\rho}$ is recursive, $\Theta(p(n),m)$ is applied to $\Theta(n,m)$; to the result we apply $\Theta(p(p(n)),m)$ and so on. Intuitively, $\Theta^*(n,m)$ is is the sequence
$$\Theta(n,m)\Theta(p(n),m)\Theta(p(p(n)),m) \cdots \Theta(\bar{0},m).$$
or expressed recursively 
$$\Theta^*(n,m) =\{\Theta(0,m)\colon n=0,\ \Theta(n,m)\circ\Theta^*(p(n),m)\colon n>0\}.$$
\end{examp}
\begin{definition}[Herbrand system]
The Herbrand system of a schematic $F$-proof is defined inductively as:
\begin{itemize}
\item For $\seq F$ the Herbrand system is  $\{\emptyset\}$, the set containing the identical substitution.
\end{itemize}
\begin{itemize}
	\item Let $\rho$ be a schematic $F$-proof of the form
	\[
	\infer[\xi]{S}{\deduce{S_1}{(\rho_1)}
                      &
                      \deduce{S_2}{(\rho_2)}
                     }
	\]
	and assume the Herbrand system of $\rho_1$ and $\rho_2$ are $\Theta_1$ and $\Theta_2$. 
	If $\xi$ is a binary rule different to the resolution rule, then the Herbrand system of $\rho$ is the global s-unifier of $\Theta_1 \cup \Theta_2$, which can be computed after regularization of the proof. 
	If $\xi$ is a resolution rule of the form $R(\Theta)$, then prior regularization is mandatory and the Herbrand system is $(\Theta_1 \cup \Theta_2) \circ \Theta$.
	\item Let $\rho$ be a schematic $F$-proof of the form
	\[
	\infer[\xi]{S}{\deduce{S'}{(\rho')}}
	\]
	and assume that the Herbrand system of $\rho'$ is $\Theta$, then the Herbrand system of $\rho$ is $\Theta$.
	\item Let $\rho$ be of the form $\rho_1 \circ \rho_2$, then the Herbrand system of $\rho$ is $\Theta_1 \circ \Theta_2$, where $\Theta_1$ is the Herbrand system of $\rho_1$, and $\Theta_2$ is the Herbrand system of $\rho_2$, 
\item Let $\rho(\bar{X},\vec{n},V_{\phat}(\vec{Y},\vec{m},k))$ be a proof schema of $V_{\phat}(\vec{Y},\vec{m},p(k))$ from $V_{\phat}(\vec{Y},\vec{m},k)$, let $\Theta(\vec{n})$ be the global s-unifier of the derivation, and let $\rhohat$ be defined as ($V(k)$ stands for $V_{\phat}(\vec{Y},\vec{m},k)$)
\[
\begin{array}{l}
\If\ k= \bar{0}\ \Then\ \rhohat((\bar{X},\vec{n},V(k)) = V(\bar{0})\\[1ex]
\Else\ \rhohat((\bar{X},\vec{n},V(k)) =
\rho(\bar{X},\vec{n},V(k)) \circ \rhohat((\bar{X},\vec{n}\{k \ass p(k)\},V(p(k)))).
\end{array}
\] 
To ensure that $\rhohat$ is regular we must ensure that all variable expressions in $\rho$ contain the parameter $n$. If this is not the case the variables need to be renamed.
Then the Herbrand system of $\rho$ is defined as
\[
\begin{array}{l}
\If\ k= \bar{0}\ \Then\ \Theta^*(\vec{n}) = \Theta(\vec{n})\\[1ex]
\Else\ \Theta^*(\vec{n}) =
\Theta(\vec{n}) \circ \Theta^*(\vec{n}\{k \leftarrow p(k)\}).\\[1ex]
\end{array}
\] 
\item  Let $\rho(\vec{X_1},\vec{n})=$
\[
\begin{array}{l}
\If\ C_1 \ \Then\   \rho_1(\vec{X_1},\vec{n})\ \Else\\
\mbox{ } \If\  C_2 \ \Then\   \rho_2(\vec{X_2},\vec{n})\ \Else\\
\ldots\\
\mbox{   }\ \  \If\  C_{\alpha-1} \ \Then\   \rho_{\alpha-1}(\vec{X_{\alpha-1}},\vec{n})\ \Else\ \rho_\alpha(\vec{X}_\alpha,\vec{n})
\end{array}
\]
be a schematic $F$-proof, and assume that the Herband systems of $\rho_1, \ldots, \rho_\alpha$ are $\Theta_1, \ldots, \Theta_\alpha$. Then the Herbrand system of $\rho$ is
\[
\begin{array}{l}
\If\ C_1 \ \Then\   \Theta_1\ \Else\\
\mbox{ } \If\  C_2 \ \Then\   \Theta_2\ \Else\\
\ldots\\
\mbox{   }\ \  \If\  C_{\alpha-1} \ \Then\  \Theta_{\alpha-1}\ \Else\ \Theta_\alpha
\end{array}
\]
\end{itemize}
\end{definition}
\begin{examp}
Consider the Herbrand system from Example \ref{ex.Herbrandschema} and the fixed parameters $n = 1$ and $m=0$. Then,
\begin{eqnarray*}
\Theta^*(1,0) &=& \Theta(1,0)\Theta^*(p(1),0)\\
                         &=&  \{X(1) \ass \fhat(Y(1),0), Z(1) \ass \fhat(a,1)\}\{X(0) \ass \fhat(Y(0),0), Z(0) \ass \fhat(a,0)\}\\
&=& \{\{X(1) \ass Y(1),\ Z(1) \ass f(a),\ X(0) \ass Y(0),\ Z(0) \ass a\}.
\end{eqnarray*}
Applying $\Theta^*(1,0)$ to the initial sequents results in an unsatisfiable set of sequents.
\end{examp}
\begin{theorem}\label{sound-Herbrand-systems}
Let $\Theta$ be a Herbrand system of a refutation schema $\rho$ of $\qhat(\vec{X},n_1,\ldots,n_\alpha)$. Then, for all numerals 
$\nu_1,\ldots,\nu_\alpha$, the set of sequents 
$$\{\vdash \qhat(\vec{X},n_1,\ldots,n_\alpha)\Theta\}\{n_1 \ass \nu_1,\ldots,n_\alpha \ass \nu_\alpha\}$$
is unsatisfiable.
\end{theorem}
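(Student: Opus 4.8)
The plan is to fix the parameters, invoke Theorem~\ref{the.soundness-ref-schemata} to turn the refutation schema into a genuine finite $\RPLnull^\Psi V$-refutation, read off the ordinary (first-order) Herbrand sequent of that refutation, and finally verify that the inductively defined Herbrand schema $\Theta$ evaluates---under the chosen parameter assignment---to exactly the family of substitutions witnessing that Herbrand sequent.

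First I would fix numerals $\nu_1,\ldots,\nu_\alpha$ and put $\sigma = \{n_1 \ass \nu_1,\ldots,n_\alpha \ass \nu_\alpha\}$. By Theorem~\ref{the.soundness-ref-schemata} the instance $\rho[\sigma] = \rho\sigma$ is a finite $\RPLnull^\Psi V$-refutation of $\qhat(\vec{X},\nu_1,\ldots,\nu_\alpha)\Eval$, in which every proof recursion has been unfolded to its base case; after this unfolding the axiom $\seq \qhat(\vec{X},\vec{n})\Eval$ occurs at finitely many leaves. Regularizing $\rho[\sigma]$ and forming its total m.g.u.\ $\tau_\sigma$, the standard Herbrand argument for resolution---available because $\RPLnull$ is sound and refutationally complete \cite{CLL.2021}---shows that the set of instances of the refuted formula obtained by applying $\tau_\sigma$ at each (renamed) axiom leaf is propositionally unsatisfiable. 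This set is the Herbrand sequent of $\rho[\sigma]$ and is the first-order core of the argument.

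The central step is a lemma stating that $\{\seq \qhat(\vec{X},\vec{n})\,\Theta\}[\sigma]$ is exactly this Herbrand sequent, i.e.\ that the evaluated Herbrand schema $\Theta[\sigma]$ is precisely the family of leaf-wise instances of $\tau_\sigma$. I would prove it by induction on the inductive definition of the Herbrand schema, matching each clause with the corresponding construction of the refutation schema. For a unary rule $\Theta$ is unchanged and the claim is immediate; for a binary non-resolution rule the global $s$-unifier of $\Theta_1 \cup \Theta_2$ evaluates to the simultaneous unifier of the instance by soundness of composition (Theorem~\ref{the.comp-s-sub-sound}); for a resolution rule $R(\Theta)$ the expression $(\Theta_1 \cup \Theta_2)\circ\Theta$ evaluates to $\tau_\sigma$ by Theorems~\ref{the.comp-s-sub-sound} and~\ref{the.soundness-appl}; and for proof composition the behaviour of $\Theta_1\Theta_2 \circ \Theta_2$ follows again from Theorem~\ref{the.comp-s-sub-sound}. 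That evaluating $\qhat(\vec{X},\vec{n})\Theta$ at $\sigma$ commutes with first applying the evaluated substitution to $\qhat(\vec{X},\vec{\nu})\Eval$ is guaranteed by soundness of schematic substitution on formulas (Theorem~\ref{the.s-subst-sound-on-Tiotahat}).

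The hard part will be the proof-recursion case. When $\sigma(k)=\bar{\ell}$ the inductive closure $\rhohat$ unfolds into $\ell$ stacked copies of $\rho$, while the Herbrand schema produces the iterated composition $\Theta^*(\vec{n}) = \Theta(\vec{n})\circ\Theta^*(\vec{n}\{k \ass p(k)\})$ together with the union $\Theta_{hs} = \bigcup_{l=\bar{0}}^{k}\{\Theta^*(\vec{n}\{k \ass l\})\}$. I would argue by an inner induction on $\ell$ that $\Theta^*(\vec{n})[\sigma]$ equals the composition of the substitutions contributed by the unfolded copies and that $\Theta_{hs}[\sigma]$ enumerates exactly the leaf-wise instances at each recursion level. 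The crucial observation, already illustrated in Example~\ref{ex.Herbrandschema}, is that after regularization the distinct copies have pairwise essentially disjoint variables, so the composition of their substitutions degenerates to a union and therefore remains a well-defined first-order substitution rather than an inconsistent set of bindings; each step of the inner induction is then justified by Theorem~\ref{the.comp-s-sub-sound}. Combining the lemma with the first-order Herbrand sequent of $\rho[\sigma]$ yields that $\{\seq \qhat(\vec{X},\vec{n})\,\Theta\}[\sigma]$ is unsatisfiable for every $\sigma$, which is the assertion of the theorem.
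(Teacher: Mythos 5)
Your overall strategy is, like the paper's, ultimately a soundness argument, but you take a considerably longer route. The paper's proof is direct: apply the s-substitutions collected in $\Theta$ to the refutation schema $\rho$ itself; after this application every resolution inference degenerates into a cut (the resolved atoms have become equal), so what remains is an $\RPLnull^\Psi$-derivation of $\seq$ from instances of $\seq \qhat(\vec{X},n_1,\ldots,n_\alpha)$, and soundness of that calculus immediately gives unsatisfiability of the leaves. No detour through the regularized first-order instance, its total m.g.u., or a separately constructed ``Herbrand sequent of $\rho[\sigma]$'' is needed.

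The detour is not merely extra work; your central lemma is stated too strongly. You require that $\Theta[\sigma]$ be \emph{exactly} the family of leaf-wise instances of the total m.g.u.\ $\tau_\sigma$ of the regularized instance $\rho[\sigma]$. But the Herbrand schema is assembled from the s-unifiers actually recorded at the resolution inferences $res\{\Theta\}$ of the schematic refutation, and nothing in Definition~\ref{def.RPLnullpsi} forces these to evaluate to \emph{most general} unifiers of the corresponding first-order unification problems; they only have to be unifiers. So the claimed identity with $\tau_\sigma$ can fail, and the induction you sketch would get stuck at the resolution and proof-recursion cases. What you actually need---and what the paper implicitly uses---is the weaker statement that $\Theta[\sigma]$ simultaneously unifies all resolution problems of $\rho[\sigma]$, so that applying it turns resolutions into cuts; unsatisfiability of the instantiated leaves then follows from soundness alone, with no appeal to most-generality or to refutational completeness. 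If you weaken your lemma accordingly, the induction over the clauses of the Herbrand-schema definition (including the inner induction on the unfolding depth in the proof-recursion case, justified via Theorems~\ref{the.soundness-appl} and~\ref{the.comp-s-sub-sound}) becomes a legitimate and in fact more detailed justification than the one the paper writes down.
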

\begin{proof}
For all numerals, first apply the s-substitutions defined in $\Theta$ to $\rho$. The initial sequents are then instances of $\vdash \qhat(\vec{X},n_1,\ldots,n_\alpha)$, and the resolution rules turn into applications of cut rules. 
The thus obtained derivation is in $\RPLnull^\Psi$. As this calculus is sound the set of sequents $\{\vdash \qhat(\vec{X},n_1,\ldots,n_\alpha)\Theta\}\{n_1 \ass \nu_1,\ldots,n_\alpha \ass \nu_\alpha\}$ occurring at the leaves is unsatisfiable.
\end{proof}
The formalism for schematic proofs presented in this paper is more powerful than that defined in~\cite{Thesis.Lolic.2020} and in~\cite{CLL.2021} as these former approaches missed the inductive closure of proofs as a syntactic proof object. In fact inductive closures could only be computed for given parameter assignments, a representation on the syntax was missing. As a consequence there was no way to define Herbrand systems in a general way. Moreover, the proof recursion defines new schematic proofs which can be called from others and thus considerably extends the expressivity of the former approaches.     

\section{Conclusion}
In~\cite{CLL.2021} and~\cite{Thesis.Lolic.2020} the concept of $s$-substitution (schematic substitution) was defined which is crucial to the development of a schematic resolution calculus and thus to the schematic CERES method. However, no algorithms were defined to concatenate and apply $s$-substitutions in presence of variable expressions and extended schematic terms. In this paper we fill this hole and define a new schematic substitution calculus for a class of extended schematic terms we call {\em standard}. While schematic unification was defined in~\cite{CLL.2021} and~\cite{Thesis.Lolic.2020}, no unification algorithm was given. Here we define a sound (though incomplete) unification algorithm for sets of standard terms which substantially extends the practical applicability of schematic refutational calculi.  
We introduced the new calculus $\RPLnull^\Psi V$ for the construction of schematic refutations of quantifier-free formula schemata and applied this calculus to the definition of Herbrand systems corresponding to the schematic refutations. We applied this calculus to a  formula schema originating from the proof analysis method CERES, where it formalizes the derivations of the cut formulas in the original proof schema $\Phi$. From the refutation schema of this formula schema, we constructed a Herbrand system, which can be used to compute the Herbrand system of the original proof schema $\Phi$ (the final step in schematic cut-elimination). The formalism for proof schemata presented in this work is new, extends the expressivity of former approaches, and simplifies existing notions of proof as schema. 
We think that this new formalism improves the specification of inductive refutations considerably and thus the potential of interactive proof analysis. 
\bibliographystyle{plain}
\bibliography{references}

\end{document}